\patchcmd\Gread@eps{\@inputcheck#1 }{\@inputcheck"#1"\relax}{}{}
\newcommand{\be}{\begin{equation}}
\newcommand{\ee}{\end{equation}}
\theoremstyle{plain}
\newtheorem{definition}{Definition}[section]
\newtheorem{theorem}[definition]{Theorem}
\newtheorem{lemma}[definition]{Lemma}
\newtheorem{corollary}[definition]{Corollary}
\newtheorem{prop}[definition]{Proposition}
\theoremstyle{definition} 
\newtheorem{remark}[definition]{Remark}
\begin{document}

\title{A row-sampling based randomised finite element method for elliptic partial differential equations} 
\author[$\dagger$ $\flat$]{Yue Wu\thanks{The corresponding author: yue.wu@ed.ac.uk, yue.wu@maths.ox.ac.uk}} 
\author[$\dagger$]{Dimitris Kamilis}
\author[$\dagger$ $\ddagger$]{Nick Polydorides}
\affil[$\dagger$]{School of Engineering, University of Edinburgh, Edinburgh, UK}
\affil[$\flat$]{Mathematical Institute, University of Oxford, Oxford, UK}
\affil[$\ddagger$]{The Alan Turing Institute, London, UK}
\maketitle

\section*{Abstract}

 We consider a randomised implementation of the finite element method (FEM) for elliptic partial differential equations on high-dimensional models. This is motivated by applications where model predictions are essential for real-time process diagnostics. In these circumstances it is imperative to expedite prediction without a significant compromise in the model's fidelity, which in turn relies on the rapid assembly and solution of the associated system of equations typically at the many-query context. Our approach involves converting the solution of the linear, symmetric positive definite FEM system into an over-determined least squares problem, whose solution is then projected onto a low-dimensional subspace. The resulting low-dimensional system can be effectively sketched as a product of two high-dimensional matrices using a parameter-dependent non-uniform sampling distribution, utilising only a small subset of the model's parameters. Although different to the optimal sampling distributions based on the statistical leverage-scores of the rows of the matrices, we show that the distance between them shrinks for an appropriate choice of the projection subspace. For the approximate solution we bound the incurring errors due to the projection, subspace approximation and sketching and show that the overall error is dominated by the condition number of the projected stiffness matrix. Our approach is tested on simulations on the Dirichlet and Neumann problems for the steady-state diffusion equation. The results show that our approach has on average a tenfold improvement on the computational times compared to the classical deterministic framework at the expense of a moderately small approximation error.

\section{Introduction}

We consider the implementation of the Finite Element Method (FEM) in high-dimensional discrete models associated with elliptic partial differential equations, focusing in particular to the many-query context, where an approximate solution is sought for various inhomogeneous parameter fields. Owing to its versatility in handling models of realistic complexity, the method has been at the forefront of numerical computing and simulation for electromagnetic, mechanical, heat transfer and fluid dynamics systems \cite{ElmanSilvesterWathen}. Beyond its appeal in applied engineering research, the method has led to several algorithmic advances in scientific computing such as matrix preconditioning, fast iterative algorithms and multigrid methods \cite{Saad}. 

Our work is motivated by the need to expedite model prediction, also referred to as forward problem evaluation, in the context of a FEM-based simulation in the cases where an approximate, yet fast solution is imperative. Realising fast, real-time simulation, with large three-dimensional models is a formidable task and yet it can be of critical importance in a number of instances like online calibration of a sensor network or the control of a manufacturing process, where accurate, expensive simulations are typically deferred offline on specialised high performance computing infrastructure. Reducing the computing time for forward evaluations has been a long-standing goal for model-order reduction in computational partial differential equations and the main bottleneck of statistical inference algorithms for inverse problems and Bayesian uncertainty quantification, where multiple model runs are sought in the many query or Monte Carlo simulation context \cite{BennerCohenWillcox}, \cite{LordPowellShardlow}. It is worth emphasising however, that in practical applications involving experimental data contaminated with noise, an approximate evaluation to the respective forward problem suffices for the purpose of making model-based inferences with such data \cite{Beskos}. In this context, an approximate solution to an accurate model is preferable to an
accurate solution to an oversimplified model as the former typically allows to quantify and control the model-induced error that's otherwise hard to estimate \cite{Calvetti}.


When the accuracy of the solution can be traded off against speed, algorithms based on randomised linear algebra present a competitive alternative \cite{Woodruff}. The connection between this framework and numerical computing goes back to the sketching approach of Drineas and Mahoney for the Laplacian of a graph, where they coined the relationship between statistical leverage scoring and the so-called effective resistance of the graph \cite{DrineasMahoneyER}. Although the method is suitable to symmetric diagonally dominant (SDD) linear systems while the FEM systems are typically not SDD, there is an evocative similarity in the structure of the coefficient matrices in the respective systems, particularly in solving elliptic partial differential equations, where FEM leads to the so-called stiffness matrix, a generalisation of the Laplacian paradigm discretised on unstructured grids. The concept of effective resistances has since led to sketch-based preconditioners for SDD systems through sparsifier algorithms aimed at reducing the matrix fill-in and thus render the resulting systems solvable in a time that is asymptotically linear to their sparsity level \cite{KoutisMillerPeng}, \cite{SpielmanTeng}. More recently, Avron and Toledo have proposed a generalisation of this framework to the FEM context adapting the idea of effective resistance to that of the effective stiffness of an element in the grid \cite{AvronToledo}, relaxing the restriction to SDD systems. In particular, for the FEM sparse symmetric positive definite (SSPD) matrices, they derive formulas for the effective stiffness and show their equivalence to the statistical leverage scores, claiming that sampling $O(n\log n)$ elements according to those can lead to a sparser preconditioner such that the resulting system is solvable, with high-probability, in a small number of iterations.

While the above approaches focus predominantly on the efficient preconditioning and assembling of such systems, randomised algorithms for large-scale linear systems have already been proposed and implemented. The framework of Gower and Richtarik for example randomises the row-action iterative methods by taking random projections onto convex sets \cite{GowerRichtarik}. Applied to the FEM-induced SSPD systems, the underpinning algorithm is equivalent to a stochastic gradient descent method with provable convergence, while the approach in \cite{GowerRichtarik2} iteratively sketches the inverse of a matrix. Besides, there is a wealth of literature on sketching methods for least-squares problems, constrained or unconstrained, using data-oblivious subspace embeddings (randomised sketching transforms) that preserve some approximate isometry and orthogonality in the sketched systems. We refer the reader to the work of Woodruff \cite{Woodruff}, Drineas and Mahoney \cite{DMMS}, Pilanci and Wainwright \cite{Pilanci}, and Boutsidis and Drineas. \cite{BoutsidisDrineas}.    
  
In \cite{BertsekasYu09}, Bertsekas and Yu present an alternative approach for simulating an approximate solution to linear fixed-point equations and least squares problems, in the context of evaluating the cost of stationary policies in a Markovian decision. This is based on approximate dynamic programming algorithms that solve a projected form of Bellman's equation in a low-dimensional subspace, using sample-based approximations. Subsequently this framework was extended and coupled with importance sampling schemes by Polydorides et al. \cite{PolydoridesWangBertsekas} in solving linear inverse problems associated with Fredholm integral equations of the first kind, exploiting the characteristic smooth structure of the integral kernels.

In the many-query context one faces two computational challenges: the fast assembly of the large FEM system for each query (parameter vector), and the efficient solution of the resulting FEM system to some level of accuracy. We begin by transforming the linear SSPD FEM system into an over-determined least squares problem, and then apply a deterministically chosen orthogonal projection onto a low-dimensional subspace. Our efforts then focus on the efficient randomisation of the projected least-squares equations for every parameter query, by extending ideas from \cite{DrineasMahoneyER} and \cite{BertsekasYu09}. In this context, our contributions are in the development of the projected randomisation algorithm, the analysis of the impact of the projection on the approximation of the leverage scores, and the derivation of error bounds for the sketched projected solution. Further, we implement the proposed algorithm on Dirichlet and Neumann problems for the elliptic diffusion partial differential equation.

Our paper is organised as follows: In the next section we provide a brief introduction to the Galerkin FEM formulation for elliptic boundary value problems. We then derive the subspace-projected formulation and then we describe the sketching algorithm. Subsequently, we investigate the distance between the adopted and optimal sampling distributions in the context of the subspace projection, and then we conclude with an analysis of the various types of errors imparted on the solution through the various stages of the methodology. We end our report with a presentation and discussion of some numerical results. Whenever suitable we delegate the proofs to the appendix.   

\section{Finite element method preliminaries}

We consider the elliptic partial differential equation 
\be\label{pde}
 - \nabla \cdot p(x) \nabla u(x) = f(x) \quad \text{in}\; \Omega \subset \mathbb{R}^3,
\ee
and associated boundary conditions 
\be\label{bnd}
u = g^{(D)} \; \text{on}\; \partial\Omega_D \quad \text{and} \quad \nabla u \cdot \hat{n} = g^{(N)} \; \text{on}\; \partial\Omega_N,
\ee
on a bounded and simply connected domain $\Omega \subset \mathbb{R}^d$, $d=2,3$ with a Lipschitz smooth boundary $\partial \Omega = \partial \Omega_D \cup \partial \Omega_N$, and $\hat{n}$ the unit normal on the boundary. Further let $p(x)$ be a real, scalar and positive parameter function supported over the closure of the domain 
\be\label{padmi}
0 < p_{\mathrm{min}} \leq p(x) \leq p_{\mathrm{max}} < \infty, \quad x \in \Omega,
\ee
where $x\doteq(x_1,\ldots,x_{d})$ denotes the spatial coordinate vector. In this work we consider primarily the three-dimensional case $(d=3)$ but whenever possible we keep the notation general. Multiplying \eqref{pde} by an appropriate test function $v$, then integrating over the domain and invoking the divergence theorem yields
\be\label{variational}
\int_\Omega \mathrm{d}x \, \nabla u \cdot p \nabla v = \int_\Omega \mathrm{d}x \, f v + \int_{\partial \Omega} \mathrm{d}s \,g^{(N)} v,
\ee 
where $\mathrm{d}x$ and $\mathrm{d}s$ are volume and surface integration elements respectively. Using the standard definition of the Sobolev space on this domain as 
\be
\mathcal{H}^1(\Omega) \doteq \Bigl \{u(\Omega) \Bigl | u, \frac{\partial u}{\partial x_q}, q=1,\ldots,d \in L^2(\Omega) \Bigr \},
\ee
where $L^2(\Omega)$ is the space of square-integrable functions on $\Omega$ we can define the solution and test function spaces as
\be
\mathcal{H}^1_U \doteq \Bigl \{ u \in \mathcal{H}^1(\Omega) \Bigl | u = g^{(D)} \; \text{on}\; \partial \Omega_D \Bigr \},
\quad
\mathcal{H}^1_0 \doteq \Bigl \{ v \in \mathcal{H}^1(\Omega) \Bigl | v = 0 \; \text{on}\; \partial \Omega_D \Bigr \}
\ee
respectively. Assuming $p \in L^\infty(\bar \Omega)$ and $f \in L^2(\Omega)$ are in the Banach spaces of real functions defined on the closure of the domain $\bar \Omega$ and its interior respectively, and similarly $g^{(D)} \in H^{\frac 12}(\partial \Omega_D)$, $g^{(N)} \in H^{-\frac 1 2}(\partial \Omega_N)$, the weak form of the boundary value problem \eqref{pde}-\eqref{bnd} is to find a function $u \in \mathcal{H}^1_U$ such that 
\be\label{weak}
\int_\Omega \mathrm{d}x \, \nabla u \cdot p \nabla v = \int_\Omega \mathrm{d}x \, f v + \int_{\partial \Omega} \mathrm{d}s \, g^{(N)} v, \qquad \forall v \in \mathcal{H}^1_0.
\ee
In these conditions the existence and uniqueness of the weak solution is guaranteed by the Lax-Milgram theorem \cite{ElmanSilvesterWathen}.

To derive the Galerkin finite element approximation method from the weak form \eqref{weak}, we consider $\mathcal{T}_\Omega \doteq \{\Omega_1, \ldots, \Omega_k\}$ a tetrahedral mesh and $\mathcal{S}^1_\Omega \subset \mathcal{H}^1_0$ the conforming finite dimensional space associated with the chosen finite element basis defined on $\mathcal{T}_\Omega$. Let us explicitly quote also $\Delta_\Omega \doteq \{{\partial \Omega}_1,\ldots, {\partial \Omega}_\tau\}$ the set of $\tau$ triangular faces (resp. straight edges in $d=2$) spanning the outer surface of the discrete domain so that $\bigcup_{\ell=1}^k \Omega_\ell \approx \Omega$ and $\bigcup_{\ell=1}^\tau {\partial \Omega}_\ell \approx \partial \Omega$. The notations $|\Omega|$ and $|\partial \Omega|$ are used to express the volume (resp. area) and boundary area (resp. length) of the domain respectively. In particular, we denote the subset of $\Delta_\Omega$ on the Neumann boundary as $\Delta_\Omega^N$. If $\mathcal{S}^1_\Omega \doteq \text{span}\{\phi_1(x),\ldots, \phi_{n}(x), \ldots, \phi_{n+n_\partial}(x) \}$ comprises of piecewise linear shape functions with local support over the elements in $\mathcal{T}_\Omega$ then we can express the FEM approximation of the potential as   
\be\label{uexp}
u_h = \sum_{i=1}^{n} u_i \phi_i + \sum_{i=n+1}^{n+n_\partial} \ u_i \phi_i,
\ee
separating the expansion between the functions defined on the $n$ interior and $n_\partial$ boundary nodes. From this, the finite element formulation of the boundary value problem is to find $u_h \in \mathcal{S}^1_\Omega$ such that
\be
\sum_{\Omega_\ell \in \mathcal{T}_\Omega} \int_{\Omega_\ell} \mathrm{d}x\, \nabla u_h \cdot p \nabla u_h = \sum_{\Omega_\ell \in \mathcal{T}_\Omega} \int_{\Omega_\ell} \mathrm{d}x\, f v_h + \sum_{{\Omega_\partial}_\ell \in \Delta_\Omega^N} \int_{{\Omega_\partial}_\ell} \mathrm{d}s \, g^{(N)} v_h, \forall v_h \in \mathcal{S}^1_\Omega,
\ee 
where $g^{(N)}$ is the Neumann function on $\partial \Omega_N$. Further we select a piecewise constant basis of characteristic functions $\{\chi_1, \ldots, \chi_k\}$, where $\chi_\ell=1$ over $\Omega_\ell$ and zero elsewhere, so that the parameter and forcing terms\footnote{This choice of basis is not restrictive although it simplifies the notation and the calculations. Alternatively, one could take for example $$f_\ell = \frac{1}{|\Omega_\ell|}\int_{\Omega_\ell} \mathrm{d}x\, f$$ and compute the volume integrals encountered in the Galerkin formulation \eqref{gal} using numerical quadrature rules.} are expressed as
\be\label{pwc}
p_h = \sum_{\ell=1}^{k} p_\ell \chi_\ell, \quad \text{and} \quad f_h = \sum_{\ell=1}^{k} f_\ell \chi_\ell. 
\ee
We then write the Galerkin system of equations for the vector $\{u_1,\ldots, u_{n+n_\partial}\}$
\be\label{gal}
\sum_{j=1}^{n + n_\partial} u_j \sum_{\Omega_\ell \in \mathcal{T}_\Omega} \int_{\Omega_\ell} \mathrm{d}x\, \nabla \phi_i \cdot p_\ell \nabla \phi_j = \sum_{\Omega_\ell \in \mathcal{T}_\Omega} \int_{\Omega_\ell} \mathrm{d}x\, f_\ell \phi_i + \sum_{{\Omega_\partial}_\ell \in \Delta_\Omega^N} \int_{{\Omega_\partial}_\ell} \mathrm{d}s \, {g^{(N)}}_\ell \phi_i ,
\ee
for $i=1,\ldots, n+n_\partial$. Note that in the instance of the Dirichlet problem where $\partial \Omega_N = \emptyset$, the surface integral vanishes and the coefficients $\{u_{n+1},\ldots,u_{n+n_\partial}\}$ are fixed through $g^{(D)}$, hence the Galerkin system of equations has $n$ degrees of freedom, while for the Neumann problem $u$ has dimension $n+n_\partial - 1$, after applying the uniqueness condition. The assembly of \eqref{gal} over the elements in the domain yields a system
\be\label{aub}
A u = b,
\ee
where $A \in \mathbb{R}^{n+n_\partial \times n+n_\partial}$, the so-called FEM stiffness matrix, that is sparse, symmetric and positive-definite. The FEM construction guarantees that $b \in \mathbb{R}^{n+n_\partial}$ is in the column space of $A$ therefore the system \eqref{aub} admits a unique solution $u^* = A^{-1}b$. The focus of our work is the efficient approximation of $u^*$ in the many $p$ query context, such as the one used in Monte-Carlo approaches for inverse problems \cite{Beskos}. As such our approach will be faced with two main challenges: the efficient assembly of the stiffness matrix, and thereafter the speedy solution of the resulted FEM problem. For completeness, we define our target problem as follows.  
\begin{definition}\label{probdef}
If $p^{(1)}(x),\ldots, p^{(N)}(x)$ are parameter functions corresponding to the boundary value problem \eqref{pde}-\eqref{bnd} with fixed boundary and forcing conditions and  $A^{(1)}, \ldots, A^{(N)}$ the respective FEM stiffness matrices, compute the approximate solutions $u^{(i)}$ of  
$$
A^{(i)} u^{(i)} = b, \quad for \quad i=1,\ldots, N,
$$
where $N$ and the dimensions of $A$ are large.
\end{definition}

\subsection{Notation}

Hereafter, in a discrete model $\mathcal{T}_\Omega$ with $k$ elements and $n + n_\partial$ nodes we express as $p_\ell$ the $\ell$th element of the positive parameter vector $p \in \mathbb{R}^k$, $|\Omega_\ell|$ the volume or area of that element, $\omega$ the vector $\{|\Omega_1|,\ldots, |\Omega_k|\}$ and $u\in \mathbb{R}^{n + n_\partial}$ the sought FEM solution coefficients.
For a matrix $X$, $X_{(\ell)*}$ and $X_{*(\ell)}$ denote the $\ell$th row and column of $X$ respectively, $X_{i j}$ its $i,j$th element, $X_{i,(j:j+n)}$ the elements on the $i$th row between columns $j$ and $j+n$ and $X_{*(i:j)}$ the part of the matrix in columns $i$ to $j$. $X^\dag$ is the pseudo-inverse of $X$, $\kappa(X)$ its condition number, $\sigma_i(X)$ its $i$th singular value, and $\lambda_i(X)$ its corresponding eigenvalue. For $X \in \mathbb{R}^{m \times n}$ with $m \geq n$ we define the singular value decomposition $X = U_X \Sigma_X V_X^T$ where $U_X \in \mathbb{R}^{m \times n}$, $\Sigma_X \in \mathbb{R}^{n \times n}$ and $V_X \in \mathbb{R}^{n \times n}$. Unless stated otherwise, singular values and eigenvalues are ordered in non-increasing order, thus for a square matrix $X \in \mathbb{R}^{n \times n}$, $\lambda_1(X)=\lambda_{\max}(X)$ is the largest eigenvalue, $\lambda_n(X)=\lambda_{\min}(X)$ the smallest, and $\mathrm{diag}(X)$ is the vector of its main diagonal. Further we write $\|\cdot\|$ for the Euclidean norm for a vector or the spectral norm of a matrix, $\|\cdot\|_{\max}$ the max norm of a vector and $\| \cdot \|_F$ the Frobenius norm of a matrix. For matrices $X$ and $Y$ with the same number of rows $(X|Y)$ is the augmented matrix formed by column concatenation. The notation $I$ is for the identity matrix, or when dimension is important to the context $I_n$ is the identity in dimension $n$, and $[n]$ is the set of integers from 1 to $n$ inclusive. For two scalar quantities $a$ and $b$, $a \vee b$ denotes the maximum of $a$ and $b$. Finally, $\mathbb{E}_{\xi}[\cdot]$ stands for the expectation of a random scalar or matrix under probability density $\xi$, and $\mathrm{Var}_{\xi}[\cdot]$ for the variance of an estimator under $\xi$.  

\subsection{Assembly of the stiffness matrix}

From the definitions of the shape functions in \eqref{uexp}, forcing terms and Neumann boundary conditions, the element of the stiffness matrix is given by \be\label{stiff}
A_{ij} =  \sum_{\Omega_\ell \in \mathcal{T}_\Omega} |\Omega_\ell| \, p_\ell \, \nabla \phi_i \cdot \nabla \phi_j, \quad i,j \in \mathcal{I}_\ell,
\ee 
where $\mathcal{I}_\ell$ is the index set of the $d+1$ vertices of the $\ell$th element. Forming the sparse matrix $D_\ell \in \mathbb{R}^{d\times (n+n_\partial)}$ with the gradients of the shape functions defined at $\mathcal{I}_\ell$ and stacking them together for all $k$ elements to a matrix $D \in \mathbb{R}^{kd \times (n+n_\partial)}$ we can define 
\be\label{Ydef}
Y = Z^{\frac 1 2} D
\ee
for a positive diagonal matrix 
\be
Z = z \otimes I_d, 
\ee
where the vector $z$ has elements $z_\ell=|\Omega_\ell|p_\ell$ and $\otimes$ denotes the Kronecker product. Intrinsically, the FEM construction allows forming the stiffness matrix either as a high-dimensional sum 
\be\label{sumstiff}
A = \sum_{\ell=1}^k Y_\ell^T Y_\ell,  \quad \text{where} \quad Y_\ell =\sqrt{z_\ell} D_\ell,
\ee   
or a matrix product  
\be \label{Adec}
A = Y^TY =\sum_{\ell=1}^k \sum_{j=0}^{d-1} \sum_{j'=0}^{d-1} Y^T_{*(3\ell-j)}Y_{(3\ell-j')*},
\ee
both of which require an efficient assembly using reference elements and geometry mappings \cite{KirbyLogg}.
For simplicity in the notation we demonstrate our methodology by considering the Dirichlet problem where $A$ has dimensions $n \times n$, and note that modifications to the Neumann problem are trivial following the conventional FEM implementation \cite{ElmanSilvesterWathen}. In our approach we follow the product construction \eqref{Adec}, for which the spectrum of the stiffness matrix $A$, and respectively that of $Y$, are important. For completeness we quote two relevant bounds from \cite{KamenskiHuangXu}.
\begin{lemma}\label{largesteig}
For $Y = Z^{\frac 1 2}D$ with a singular value decomposition (SVD) $Y = U_Y \Sigma_Y V_Y^T$, then the largest eigenvalue of the stiffness matrix $A$ is $\lambda_1(\Sigma_Y)^2$ and it is bounded by
\be
\max_i A_{ii} \leq \lambda_1(\Sigma_Y)^2 \leq (d+1) \max_i A_{ii}
\ee
\end{lemma}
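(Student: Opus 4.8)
The plan is to separate the claim into two parts: the exact identification of the largest eigenvalue of $A$ with $\lambda_1(\Sigma_Y)^2$, and then the two-sided diagonal bound, treating the lower and upper inequalities independently. First I would establish the identity. Since $A = Y^T Y$ and the SVD gives $Y = U_Y \Sigma_Y V_Y^T$, substitution yields $A = V_Y \Sigma_Y^2 V_Y^T$, so the eigenvalues of $A$ are precisely the squared diagonal entries of $\Sigma_Y$; in particular $\lambda_1(A) = \lambda_1(\Sigma_Y)^2$. This step is purely algebraic and uses no mesh structure.

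For the lower bound I would use the variational characterisation $\lambda_1(A) = \max_{\|v\|=1} v^T A v$. Testing against each standard basis vector $e_i$ gives $\lambda_1(A) \ge e_i^T A e_i = A_{ii}$, and since this holds for every $i$ we obtain $\lambda_1(A) \ge \max_i A_{ii}$. Positive-definiteness of $A$ ensures the diagonal entries are positive, so the bound is meaningful.

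The upper bound is the substantive part. Here I would exploit the element-wise assembly \eqref{sumstiff}, writing $A = \sum_\ell A^{(\ell)}$ with $A^{(\ell)} = Y_\ell^T Y_\ell \succeq 0$, each supported only on the $d+1$ vertices indexed by $\mathcal{I}_\ell$. For a fixed unit vector $v$ I would bound each element contribution by combining the positive-semidefinite Cauchy--Schwarz inequality $|A^{(\ell)}_{ij}| \le \sqrt{A^{(\ell)}_{ii}\,A^{(\ell)}_{jj}}$ with the ordinary Cauchy--Schwarz over the at most $d+1$ active indices, which gives $v^T A^{(\ell)} v \le \bigl(\sum_{i\in\mathcal{I}_\ell}\sqrt{A^{(\ell)}_{ii}}\,|v_i|\bigr)^2 \le (d+1)\sum_{i\in\mathcal{I}_\ell} A^{(\ell)}_{ii}\, v_i^2$. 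Summing over $\ell$ and using that the local diagonals assemble to the global diagonal, $\sum_{\ell\,:\,i\in\mathcal{I}_\ell} A^{(\ell)}_{ii} = A_{ii}$, collapses the double sum to $v^T A v \le (d+1)\sum_i A_{ii}\, v_i^2 \le (d+1)\max_i A_{ii}$, and taking the maximum over $\|v\|=1$ finishes the bound.

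The main obstacle is obtaining the upper bound with the clean constant $d+1$ and no spurious dependence on the mesh valence. A naive estimate $v^T A^{(\ell)} v \le \lambda_1(A^{(\ell)})\,\|v_{\mathcal{I}_\ell}\|^2$ summed over elements introduces the number of elements meeting at a node and is therefore too weak. The crucial device is to weight the local Cauchy--Schwarz by the diagonal entries $A^{(\ell)}_{ii}$ rather than uniformly, so that after summation the local diagonals telescope exactly to $A_{ii}$; this is precisely where the support structure of the element matrices (at most $d+1$ nonzero rows and columns per element) does the work, rather than any global mesh quantity.
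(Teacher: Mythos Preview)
Your argument is correct. The identification $\lambda_1(A)=\lambda_1(\Sigma_Y)^2$ and the lower bound are immediate, and your upper-bound argument---using $|A^{(\ell)}_{ij}|\le\sqrt{A^{(\ell)}_{ii}A^{(\ell)}_{jj}}$ on each positive-semidefinite element matrix, then Cauchy--Schwarz over the $d+1$ local indices, and finally the telescoping $\sum_{\ell:\,i\in\mathcal{I}_\ell}A^{(\ell)}_{ii}=A_{ii}$---is clean and yields exactly the constant $d+1$ with no dependence on mesh valence.

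As for comparison: the paper does not actually prove this lemma but simply cites Lemma~4.1 of \cite{KamenskiHuangXu}. Your write-up therefore supplies a self-contained proof where the paper gives none. The argument you use is essentially the one in the cited reference (element-wise decomposition plus the $(d{+}1)$-term Cauchy--Schwarz), so there is no methodological divergence to discuss---you have reconstructed the standard proof.
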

\begin{proof}
The proof is in Lemma 4.1 of \cite{KamenskiHuangXu}.
\end{proof}

\begin{lemma}\label{smallesteig}
For $Y = Z^{\frac 1 2}D$ with SVD $Y = U_Y \Sigma_Y V_Y^T$, then the smallest eigenvalue of the stiffness matrix $A$ is bounded from below by
\be
\lambda_n(\Sigma_Y)^2 \geq 
C p_{\mathrm{min}}\frac 1 k \begin{cases} \Bigl (1 + \log \frac{\bar \omega}{\omega_{\mathrm{min}}} \Bigr)^{-1}, & d=2\\
\Bigl ( \frac 1 k \sum_{\Omega_\ell \in \mathcal{T}_\Omega} \bigl ( \frac{\bar \omega}{\omega_\ell} \bigr )^{\frac 1 2} \Bigr )^{-\frac 2 3}, & d=3
\end{cases}
\ee
where $\bar \omega$ is the average element size, and $\omega_{\mathrm{min}}$ is the minimum element size in the mesh. $C$ is a generic constant, $k$ the total number of elements in the mesh and $p_{\textrm{min}}$ the minimum value in the parameter vector.
\end{lemma}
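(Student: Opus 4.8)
The plan is to bound $\lambda_n(\Sigma_Y)^2 = \lambda_{\min}(A)$ from below through the Rayleigh quotient, peeling off in turn the parameter field, the domain, and finally the mesh geometry. Starting from the product form \eqref{sumstiff}, so that $A = \sum_\ell z_\ell D_\ell^T D_\ell$ with $z_\ell = |\Omega_\ell| p_\ell$, for any nodal coefficient vector $v \in \mathbb{R}^n$ one has
\be
v^T A v = \sum_{\ell=1}^k |\Omega_\ell|\, p_\ell\, |D_\ell v|^2 = \int_\Omega p_h\, |\nabla u_h|^2\, \mathrm{d}x \geq p_{\mathrm{min}}\, \|\nabla u_h\|_{L^2}^2,
\ee
where $u_h = \sum_i v_i \phi_i$ is the piecewise-linear function whose nodal values are the entries of $v$ (recall that for linear elements the gradient $D_\ell v$ is constant on each $\Omega_\ell$), and the last inequality uses the admissibility bound \eqref{padmi}. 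This isolates the factor $p_{\mathrm{min}}$ appearing in the statement and reduces the task to a parameter-free estimate.

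Next I would invoke a Poincar\'e--Friedrichs inequality. Because $u_h$ vanishes on the Dirichlet boundary, there is a constant $C_\Omega$ depending only on $\Omega$ with $\|\nabla u_h\|_{L^2}^2 \geq C_\Omega \|u_h\|_{L^2}^2$, which absorbs the domain into the generic constant $C$ and leaves the task of comparing $\|u_h\|_{L^2}^2$ with the Euclidean norm $|v|^2$. Writing $\|u_h\|_{L^2}^2 = v^T M v$ for the FEM mass matrix $M$, this amounts to bounding $\lambda_{\min}(M)$ from below, so that
\be
\lambda_{\min}(A) \geq C\, p_{\mathrm{min}}\, \lambda_{\min}(M).
\ee

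The remaining and most delicate step is the lower bound on $\lambda_{\min}(M)$ for a general, possibly strongly graded mesh. Assembling $M$ from the local element mass matrices, whose smallest eigenvalue is proportional to the element volume $|\Omega_\ell|$ (with a $d$-dependent constant), a crude estimate gives only $\lambda_{\min}(M) \gtrsim \omega_{\mathrm{min}}$, which is far too pessimistic once a handful of elements are much smaller than the rest. The sharper forms quoted in the statement arise from trading the smallest element against the mesh average: after the dimension-specific bookkeeping one is left with the logarithmic factor $\bigl(1 + \log(\bar\omega/\omega_{\mathrm{min}})\bigr)^{-1}$ for $d=2$ and the averaged ratio $\bigl(\frac{1}{k}\sum_{\Omega_\ell \in \mathcal{T}_\Omega}(\bar\omega/\omega_\ell)^{1/2}\bigr)^{-2/3}$ for $d=3$, multiplied by the overall $\frac{1}{k}$ scaling that reflects $\bar\omega = |\Omega|/k$ (with $|\Omega|$ absorbed into $C$).

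The main obstacle is precisely this mass-matrix estimate: obtaining a bound that is simultaneously sharp and uniform over arbitrarily nonuniform meshes requires the careful geometric argument of \cite{KamenskiHuangXu}, whereas the preceding coercivity and Poincar\'e steps are entirely standard. I would therefore concentrate the effort on the lower bound for $\lambda_{\min}(M)$ and simply cite the two preliminary inequalities.
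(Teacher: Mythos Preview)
The paper does not actually prove this lemma: its ``proof'' consists solely of a citation to Lemma~5.1 of \cite{KamenskiHuangXu}. Your proposal goes further than the paper by sketching the architecture of that argument --- factoring out $p_{\min}$ via the Rayleigh quotient, applying Poincar\'e--Friedrichs to pass to the mass matrix, and then isolating the mesh-dependent lower bound on $\lambda_{\min}(M)$ as the substantive step --- and you correctly identify that last step as the one requiring the geometric analysis of \cite{KamenskiHuangXu}. So your approach and the paper's are not really different: both ultimately defer to the same reference, with your sketch making explicit the standard reductions that precede the hard estimate. There is no gap to flag; if anything, your write-up is more informative than the paper's one-line citation.
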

\begin{proof}
The proof is in Lemma 5.1 of \cite{KamenskiHuangXu}.
\end{proof}

\subsection{Dimensionality reduction}\label{sec:DimRed}

Let us recall from \eqref{Adec} and the definition $Y = Z^{\frac 12} D$ that the dependence of the stiffness matrix $A$ on the parameter vector $p$ is restricted to the diagonal $Z$. It can thus be shown that the solution of the consistent system of the FEM equations $Au=b$ can be alternatively obtained by solving the over-determined least squares problem
\be\label{uLS}
\hat u_{\text{ls}} = \arg\min_{u \in \mathbb{R}^n} \|Y u - Z^{-\frac 1 2} (D^T)^\dag b\|^2,
\ee
Assuming that the inverse of $A$ exists, this is immediately obvious by evaluating the estimator 
$$
\hat u_{\text{ls}} = (Y^TY)^{-1} Y^T Z^{-\frac 1 2} (D^T)^\dag b = A^{-1}D^T Z^{\frac 1 2} Z^{-\frac 1 2}(D^T)^\dag b = A^{-1}u = u^*. 
$$
Inspired by \cite{YB10} we consider projecting $u^*$ onto a low-dimensional subspace $\mathcal{S}_\rho$, spanned by a basis of $\rho$ linearly independent functions and thereafter attempt to simulate an approximate solution within this subspace. Given an $n \times \rho$ matrix $\Psi$ with $\rho < n$ orthonormal columns, the projection operator $\Pi\dot =\Psi \Psi^T$ maps vectors $u \in \mathbb{R}^n$ to the subspace
\be
\mathcal{S}_\rho \doteq \{\Psi r\, |\, r \in \mathbb{R}^\rho\},
\ee
such that for any $u = \Pi u + (I - \Pi) u$ there is a unique, optimal low-dimensional solution $r^*$ satisfying 
\be
\Psi r^* = \Pi u.
\ee 
Assigning $X=Y\Psi$, then if the basis $\Psi$ is chosen so that the projection error $(I - \Pi) u$ is sufficiently small, the task at hand is to evaluate a low-dimensional vector $r \in \mathbb{R}^\rho$ that approximates $r^*$ (respectively $\Psi r \in \mathbb{R}^n$ that approximates $\Pi u^*$), in the least squares sense 
\be\label{eqn:requiv}
r =\arg \min_{r\in \mathbb{R}^\rho} \bigl \|X r - (Y^T)^\dagger b \bigr \|^2,
\ee
whose solution is
\begin{align}\nonumber
r & =(X^TX)^{-1}X^T(Y^T)^\dagger b\\ \nonumber
& =(X^TX)^{-1}X^TY(Y^TY)^{-1} b\\ \nonumber
&=(X^TX)^{-1}\Psi^Tb\\ \nonumber
& =(\Psi^TA\Psi )^{-1}\Psi^TA u\\ \nonumber
& =(\Psi^TA\Psi )^{-1}\Psi^TA(\Pi u+(I-\Pi)u)\\ \label{eqn:r2formula}
&=\Psi^T u+(\Psi^TA\Psi )^{-1}\Psi^TA(I-\Pi)u.
\end{align}
Notice that, despite the reduction in the dimension of the solution, problem \eqref{eqn:requiv} turns out to be computationally more expensive than the original \eqref{aub} as it requires the pseudo-inverse of the large, parameter dependent $Y^T$ matrix. However, \eqref{eqn:requiv} admits a more efficient formulation, stated in the form of the following Lemma. 

\begin{lemma}
The solution of the least-squares problem \eqref{eqn:requiv} can be computed via the alternative formulation 
\begin{align}\label{eqn:requiv3}
r = \arg \min_{r\in \mathbb{R}^\rho}\bigl \|X r -Z^{-\frac{1}{2}}(D^T)^\dagger b \bigr \|^2.
\end{align}
\end{lemma}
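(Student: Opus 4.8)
The plan is to show that the two least-squares problems have identical normal equations and hence the same minimiser. Since $Y$ has full column rank (because $A = Y^T Y$ is positive definite, hence invertible) and $\Psi$ has orthonormal columns, the product $X = Y\Psi$ has full column rank $\rho$, so $X^T X$ is invertible and each problem possesses a unique solution of the form $(X^T X)^{-1} X^T$ applied to the respective right-hand side. The two right-hand sides differ only in the vector that $X^T$ multiplies, namely $(Y^T)^\dagger b$ in \eqref{eqn:requiv} versus $Z^{-1/2}(D^T)^\dagger b$ in the claimed formulation. It therefore suffices to establish the single identity $X^T (Y^T)^\dagger b = X^T Z^{-1/2}(D^T)^\dagger b$; the equality of the two minimisers then follows by left-multiplying through by the common factor $(X^T X)^{-1}$.

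For the left-hand side I would use that $Y^T$ is a wide matrix of full row rank $n$, so its pseudo-inverse is $(Y^T)^\dagger = Y(Y^T Y)^{-1} = Y A^{-1}$. Substituting $X^T = \Psi^T Y^T$ then gives $X^T (Y^T)^\dagger b = \Psi^T Y^T Y A^{-1} b = \Psi^T A A^{-1} b = \Psi^T b$, which simply reproduces the expression already obtained in the derivation preceding \eqref{eqn:r2formula}.

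For the right-hand side I would exploit the factorisation $Y = Z^{1/2} D$, whence $X^T Z^{-1/2} = \Psi^T Y^T Z^{-1/2} = \Psi^T D^T Z^{1/2} Z^{-1/2} = \Psi^T D^T$, using that $Z$ is a positive diagonal matrix. Consequently $X^T Z^{-1/2}(D^T)^\dagger b = \Psi^T D^T (D^T)^\dagger b$, where $D^T (D^T)^\dagger$ is the orthogonal projector onto the range of $D^T$ in $\mathbb{R}^n$. The crucial observation is that $D$ must have full column rank $n$, since otherwise $A = D^T Z D$ could not be invertible; hence the range of $D^T$ is all of $\mathbb{R}^n$, the projector equals $I_n$, and the right-hand side also collapses to $\Psi^T b$, matching the left-hand side.

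The main obstacle, and essentially the only place where care is needed, is the rank bookkeeping for the two pseudo-inverses. One must justify that $Y^T$ has full row rank (so that $(Y^T)^\dagger = Y A^{-1}$) and that $D$ has full column rank (so that $D^T (D^T)^\dagger = I_n$); both reduce to the invertibility of the stiffness matrix $A$, which the FEM construction guarantees. A minor subtlety is that $b$ need not be assumed to lie in the range of $D^T$ in advance, but since that range is the whole of $\mathbb{R}^n$ this holds automatically; alternatively one may appeal directly to the fact that $b$ lies in the column space of $A$. Once these rank conditions are in place, the two simplifications are routine matrix algebra and the coincidence of the two minimisers is immediate.
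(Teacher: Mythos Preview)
Your proposal is correct and follows essentially the same route as the paper: both arguments reduce to showing that $X^T Z^{-1/2}(D^T)^\dagger b = \Psi^T b$ via $X^T Z^{-1/2} = \Psi^T D^T$ and $D^T(D^T)^\dagger = I_n$ (equivalently, $(D^T)^\dagger = D(D^TD)^{-1}$), after which the normal-equation solution $(X^TX)^{-1}\Psi^T b$ coincides with the $r$ already computed in \eqref{eqn:r2formula}. Your version is slightly more explicit about the rank hypotheses underpinning the two pseudo-inverse identities, but the substance is identical.
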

\begin{proof} Developing the squared norm and introducing the expression of $(D^T)^\dagger$ we have 
\begin{align*}
(X^TX )^{-1}X^TZ^{-\frac{1}{2}}(D^T)^\dagger b &=(X^TX)^{-1}\Psi^TD^TZ^{\frac{1}{2}}Z^{-\frac{1}{2}}(D^T)^\dagger b\\
&=(X^TX)^{-1}\Psi^TD^T(D^T)^\dagger b\\ 
&=(X^TX)^{-1}\Psi^TD^TD(D^TD)^{-1} b\\
&=(X^TX)^{-1}\Psi^Tb\\
& =(\Psi^TA\Psi )^{-1}\Psi^TA u\\ &=(\Psi^TA\Psi )^{-1}\Psi^TA(\Pi u+(I-\Pi) u)\\
&=\Psi^T u + (\Psi^TA\Psi )^{-1}\Psi^TA(I-\Pi)u = r.
\end{align*}
\end{proof}
The fourth equation above indicates that for $X=Y\Psi$ the projected normal equations for the FEM system are\footnote{We emphasise the contrast between the projected equations in \eqref{eqn:requiv3normal} and the projected variable equations $\Psi^TA^TA \Psi r' = \Psi^T A^T b$ which correspond to the LS problem
$$
r' = \arg\min_{r \in \mathbb{R}^\rho} \bigl \|A \Psi r - b \bigr\|^2,
$$
the solution of which is
\begin{align*}\nonumber
r' & =(\Psi^TA^2\Psi )^{-1}\Psi^TA b\\ \nonumber
& =(\Psi^TA^2\Psi )^{-1}\Psi^TA^2 u\\ \nonumber
& =(\Psi^TA^2\Psi )^{-1}\Psi^TA^2(\Pi u+(I-\Pi) u)\\ 
& =\Psi^T u+(\Psi^TA^2\Psi )^{-1}\Psi^TA^2(I-\Pi) u,
\end{align*}
that incurs a subspace regression error term that is quadratic in $A$. Moreover, note that the right hand side vector in the normal equations $\Psi^TA^TA \Psi r' = \Psi^T A^T b$ has dependence on the parameter through $A$.}
\begin{align}\label{eqn:requiv3normal}
X^TXr = \Psi^T A \Psi r =\Psi^T b,
\end{align}
thus following up from the approach of Drineas et al. \cite{DrineasMahoneyER} we consider the randomisation of the projected coefficients matrix (the Hessian of the residual in \eqref{eqn:requiv3}) as in
\begin{align}\label{eqn:regrand3norm}
X^TSS^TX \hat r = \Psi^T b,
\end{align}
noticing that this can be deduced from \eqref{eqn:requiv3normal} 
$$
X^T SS^T X r + X^T (I - SS^T) X r = \Psi^T b,
$$
by neglecting the sketching error term $X^T (I - SS^T) X$. In the next sections we discuss how to randomise the computation of $X^T SS^T X$ using a sketching matrix $S$ that depends on the parameter vector $p$, while in the error analysis that follows we focus our attention on the various sources of errors affecting the induced sketched approximation of $u$ and in bounding the overall error.

So far we have discussed the projection of the high-dimensional system without providing explicit details on how the basis $\Psi$ is selected. A desired property of the appropriate basis is to sustain a small projection error $\|u - \Pi u\|$ for all admissible $p$ choices under the constraint $\rho \ll n$. Options include parameter-specific bases such as a subset of the right singular vectors of $A$ obtained through a randomised decomposition or Krylov-subspace bases which are orthogonalised via a Gram-Schmidt process \cite{Halko}. Here we opt for a generic basis exploiting the smoothness of the solution on Lipschitz domains. In particular, we select the basis among the eigenvectors of the discrete Laplacian operator
\be\label{Delta}
\Delta := D^TD,
\ee
for $D$ the gradients of the shape functions matrix in \eqref{Ydef} and $\Delta Q = Q \Lambda$, by splitting the eigenvectors $Q$ as
$$
Q=\bigl ( Q_{*(1:n-\rho-1)}| \Psi \bigr ),
$$
such that $\Psi$ corresponds to the last $\rho$ columns of $Q$ and the $\rho$ smallest eigenvalues $\{\lambda_{n-\rho-1}(\Delta), \ldots, \lambda_n(\Delta)\}$. This arrangement implies that the columns of $\Psi$ are ordered in decreasing spatial variation in $\Omega$
$$
\|D \Psi_{*(i)}\| > \|D \Psi_{*(j)}\| > 0, \quad \text{for} \quad \rho \geq i>j \geq 1.
$$
Clearly, the decomposition of $\Delta$ is computationally expensive so this can be performed offline, once, and then used the basis for all instances of the parameter vector. From \eqref{eqn:requiv3normal}, the existence of $r$ requires that $X^TSS^TX$ has full rank, hence it suffices to show that $SS^T \to I$ as the number of samples $c \rightarrow \infty$ with probability 1.

\section{Simulating the reduced system}\label{sec:RamSam}

In this section we focus attention to the randomised simulation of the reduced problem \eqref{eqn:regrand3norm}. In what follows we assume that all mesh-dependent quantities, including the basis $\Psi$ are readily available through offline computations, and are directly accessible from memory on demand. We aim to estimate the low-dimensional system matrix $\hat G:= X^TSS^TX$ in \eqref{eqn:regrand3norm} so that it maintains a minimal Frobenius norm from its deterministic counterpart
\eqref{eqn:requiv3normal}
\be
G \doteq X^TX = \sum_{\ell=1}^{kd} X^T_{(\ell)*} X_{(\ell)*} \;.
\ee
To do this assume a sampling distribution $\xi \doteq \{ \xi_\ell \}_{\ell=1}^{kd}$ with $\sum_{\ell=1}^{kd} \xi_\ell=1$ so that an index $\ell$ in the set $[kd]$ can be drawn with probability $\xi_\ell$. Then collecting $c \ll kd$ independent and identically distributed index samples $\{r_1, r_2, \ldots, r_c\}$ according to $\xi$ we can approximate $G$ as
\be\label{hatA}
\hat G \doteq X^TSS^T X = \frac 1 c \sum_{t=1}^c \frac{1}{\xi_{r_t}} X^T_{(r_t)*} X_{(r_t)*} 
\ee
for a sketching matrix $S=BC$, where $C$ is a $ c\times c$ diagonal matrix and $B$ is a tall $kd\times c$ sparse matrix with entries
\be \label{defBC}
C = \frac{1}{\sqrt{c}} \mathrm{diag} \bigl ( \xi^{-\frac12}_{r_1}, \ldots, \xi^{-\frac12}_{r_c} \bigr), \quad \text{and} \quad B = \bigl ( 1_{r_1}, \ldots, 1_{r_c}\bigr ),
\ee
and $1_i$ is the $i$th canonical vector. Indeed, $SS^T = BC^2B^T$ returns a $kd\times kd$ diagonal matrix with non-negative entries. It is important to observe that the above construction preserves the semi-definiteness and symmetry in $\hat G$, while involving significantly fewer operations compared to computing $G$. The estimator $\hat G$ can be shown to be an unbiased estimator of $G$ through probabilistic arguments.

\begin{prop}\label{prop:unbias} The matrix $\hat G$ constructed in \eqref{hatA} is an unbiased estimator for $G$ in the sense of $\mathbb{E}_{\xi}[\hat G]=G$. In effect, when $c\to \infty$, $\mathrm{Var}_{\xi}[\|G-\hat G\|_F]\to 0$ with probability 1.
\end{prop}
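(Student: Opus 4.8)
The plan is to treat the two assertions in turn: unbiasedness follows from a one-line expectation over a single draw, while the vanishing variance follows from a second-moment computation, supplemented by the strong law of large numbers for the almost-sure part. First I would observe that $\hat G$ in \eqref{hatA} is the empirical average of $c$ independent and identically distributed rank-one matrices $M_t \doteq \xi_{r_t}^{-1} X^T_{(r_t)*} X_{(r_t)*}$, where each index $r_t$ is drawn from $\xi$ on $[kd]$. By linearity of expectation it is then enough to evaluate a single draw,
\be
\mathbb{E}_{\xi}[M_t] = \sum_{\ell=1}^{kd} \xi_\ell \cdot \frac{1}{\xi_\ell} X^T_{(\ell)*} X_{(\ell)*} = \sum_{\ell=1}^{kd} X^T_{(\ell)*} X_{(\ell)*} = G,
\ee
so that $\mathbb{E}_{\xi}[\hat G] = \frac{1}{c}\sum_{t=1}^c \mathbb{E}_{\xi}[M_t] = G$. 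The reweighting by $\xi_\ell^{-1}$ is exactly what produces this cancellation, and it is well defined as long as $\xi_\ell > 0$ on every row with $\|X_{(\ell)*}\| \neq 0$.

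Next I would control the dispersion of $\hat G$ about $G$ through its second moment. Writing $\hat G - G = \frac{1}{c}\sum_{t=1}^c (M_t - G)$ with i.i.d. summands of zero mean, the cross terms vanish in expectation and one is left with
\be
\mathbb{E}_{\xi}\bigl[\|G - \hat G\|_F^2\bigr] = \frac{1}{c}\Bigl( \mathbb{E}_{\xi}\bigl[\|M_t\|_F^2\bigr] - \|G\|_F^2 \Bigr).
\ee
Since $X^T_{(\ell)*} X_{(\ell)*}$ is a rank-one outer product, $\|X^T_{(\ell)*} X_{(\ell)*}\|_F = \|X_{(\ell)*}\|^2$, so that $\mathbb{E}_{\xi}[\|M_t\|_F^2] = \sum_{\ell=1}^{kd} \|X_{(\ell)*}\|^4 / \xi_\ell$ and the right-hand side is a finite quantity of order $1/c$. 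The scalar variance in the statement is then dominated by this matrix-level bound, since by Jensen's inequality $\mathrm{Var}_{\xi}[\|G - \hat G\|_F] \leq \mathbb{E}_{\xi}[\|G - \hat G\|_F^2]$, and the latter tends to $0$ as $c \to \infty$.

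Finally, for the almost-sure statement I would apply the strong law of large numbers entrywise to the sequence $\{M_t\}$: every entry of $\hat G$ is a sample mean of i.i.d. scalars with finite first moment (indeed finite variance, by the bound above), hence $\hat G \to G$ almost surely componentwise and therefore $\|G - \hat G\|_F \to 0$ with probability $1$. The main point demanding care, rather than any analytic difficulty, is the finiteness of $\sum_{\ell=1}^{kd} \|X_{(\ell)*}\|^4 / \xi_\ell$; this holds precisely when $\xi$ charges every row of nonzero norm, whereas a vanishing $\xi_\ell$ on such a row would render the estimator ill-posed and its variance unbounded. This support condition on $\xi$ is met by the leverage-score-type distributions introduced subsequently.
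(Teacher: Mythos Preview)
Your argument is correct and, for the unbiasedness, proceeds exactly as the paper does: linearity of expectation followed by the one-line computation $\mathbb{E}_\xi[M_t]=\sum_\ell \xi_\ell\,\xi_\ell^{-1}X_{(\ell)*}^TX_{(\ell)*}=G$. The only cosmetic difference is that the paper writes out the summand as $z_{r_t}(D\Psi)_{(r_t)*}^T(D\Psi)_{(r_t)*}$ rather than $X_{(r_t)*}^TX_{(r_t)*}$.

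Where you genuinely go beyond the paper is the second assertion: the appendix proof of this Proposition stops after unbiasedness and does not address the variance or the almost-sure convergence at all. Your second-moment identity $\mathbb{E}_\xi[\|G-\hat G\|_F^2]=c^{-1}\bigl(\sum_\ell \|X_{(\ell)*}\|^4/\xi_\ell-\|G\|_F^2\bigr)$ is in fact the same computation the paper carries out later, in the proof of Proposition~\ref{prop:minsigma}, to derive the optimal sampling distribution; you are simply invoking it earlier to get the $O(1/c)$ decay. The Jensen step $\mathrm{Var}_\xi[\|G-\hat G\|_F]\le\mathbb{E}_\xi[\|G-\hat G\|_F^2]$ and the entrywise strong law for the ``with probability~1'' clause are your own additions and are both sound, as is your caveat on the support of $\xi$.
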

 
\begin{corollary}\label{col:unbias} Define $S=BC$. Then $SS^T$ is an unbiased estimator for $I$ the identity matrix, i.e. $\mathbb{E}_{\xi}[SS^T]=I$ under probability $\xi$.
\end{corollary}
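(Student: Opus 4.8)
The plan is to compute $\mathbb{E}_{\xi}[SS^T]$ entrywise, exploiting the fact, already noted above, that $SS^T = BC^2B^T$ is diagonal. First I would write the product out explicitly: since $C^2 = \frac1c \mathrm{diag}(\xi_{r_1}^{-1},\ldots,\xi_{r_c}^{-1})$ and the columns of $B$ are the canonical vectors $1_{r_1},\ldots,1_{r_c}$, we obtain
\be
SS^T = BC^2B^T = \frac1c \sum_{t=1}^c \frac{1}{\xi_{r_t}}\, 1_{r_t} 1_{r_t}^T,
\ee
a sum of rank-one diagonal matrices. Consequently the $(\ell,\ell)$ entry of $SS^T$ collects precisely those terms for which the sampled index $r_t$ equals $\ell$, namely $(SS^T)_{\ell\ell} = \frac1c \sum_{t=1}^c \xi_{r_t}^{-1}[r_t=\ell]$ with $[\cdot]$ the Iverson indicator, while every off-diagonal entry vanishes identically.

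Next I would take the expectation over the i.i.d.\ draws $r_1,\ldots,r_c$, each sampled so that index $m$ occurs with probability $\xi_m$. By linearity of expectation it suffices to treat a single sample, and the key cancellation is
\be
\mathbb{E}_{\xi}\!\left[ \frac{1}{\xi_{r_t}}\,[r_t=\ell] \right] = \sum_{m=1}^{kd} \xi_m \cdot \frac{1}{\xi_m}\,[m=\ell] = 1,
\ee
where the sampling weight $\xi_m$ exactly cancels the factor $\xi_m^{-1}$, leaving the single surviving term at $m=\ell$. Summing over $t$ and dividing by $c$ gives $\mathbb{E}_{\xi}[(SS^T)_{\ell\ell}] = 1$ for every $\ell \in [kd]$; combined with the vanishing off-diagonal entries this yields $\mathbb{E}_{\xi}[SS^T] = I_{kd}$, as claimed.

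There is no genuine obstacle here: the statement is the diagonal building block that already underlies Proposition \ref{prop:unbias}, since $\mathbb{E}_{\xi}[X^TSS^TX]=X^T\mathbb{E}_{\xi}[SS^T]X$ reduces to $G=X^TX$ exactly when $\mathbb{E}_{\xi}[SS^T]=I$. The only point requiring care is the cancellation of the sampling weight $\xi_m$ against the factor $\xi_m^{-1}$, which is precisely why the estimator in \eqref{hatA} is normalised by $\xi_{r_t}^{-1}$; without that normalisation the expectation would be $\xi$-weighted rather than uniform. Given that the diagonal structure of $SS^T$ makes the entrywise computation immediate, I would present the short self-contained calculation above rather than specialising the argument of Proposition \ref{prop:unbias}.
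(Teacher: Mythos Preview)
Your proof is correct and follows essentially the same approach as the paper: both recognise that $SS^T$ is diagonal and compute the expectation of each diagonal entry by exploiting the cancellation of the sampling weight $\xi_\ell$ against the normalising factor $\xi_\ell^{-1}$. The only cosmetic difference is that the paper groups the $c$ trials into a binomial count $c_\ell$ with $\mathbb{E}[c_\ell]=c\xi_\ell$, whereas you sum the indicators $[r_t=\ell]$ directly and invoke linearity over the i.i.d.\ draws; the underlying computation is identical.
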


An optimal choice for $\xi_\ell$ in the sense of minimising the expectation of the Frobenius norm of the so-called simulation error $G - \hat G$, can be made according to the parameter vector $p$ as shown next.
\begin{prop}\label{prop:minsigma} The optimal sampling probability $\xi$ for $\hat G$ in \eqref{hatA} in the sense of minimising the error $\mathbb{E}_{\xi}[\|G-\hat G\|^2_F]$ is given by 
\be\label{optimalmatrix}
\xi_\ell =\frac{\|X_{(\ell)*}\|^2}{\|X\|_F^2},  \ \ \text{for all}\ \ 1\leq \ell\leq kd,
\ee
for which the corresponding variance is bounded by
\be\label{eqn:optimalvar} 
\mathrm{Var}_{\xi}[\|G-\hat G\|_F]\leq \mathbb{E}_{\xi}[\|G-\hat G\|^2_F] \leq \frac{1}{c} \Bigl(\sum_{\ell=1}^{kd} \|X_{(\ell)*}\|\Bigr)^2 \leq \frac{d^2}{c} p_\Omega^2 \|D\|^2,  
\ee 
where $p_\Omega =  \sum_{\ell=1}^k p_\ell |\Omega_\ell|$ is the discretised integral of the parameter function $p(x)$ over the domain.
\end{prop}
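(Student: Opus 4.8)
The plan is to reduce both claims -- the optimality of the distribution \eqref{optimalmatrix} and the variance bound -- to a single entrywise second-moment computation for the unbiased estimator $\hat G$. Since Proposition~\ref{prop:unbias} gives $\mathbb{E}_\xi[\hat G]=G$, the bias vanishes and I would start from the decomposition $\mathbb{E}_\xi[\|G-\hat G\|_F^2]=\sum_{i,j}\mathrm{Var}_\xi(\hat G_{ij})$. Each entry $\hat G_{ij}=\tfrac1c\sum_{t=1}^c \xi_{r_t}^{-1}X_{r_t,i}X_{r_t,j}$ is an average of $c$ i.i.d.\ draws, so $\mathrm{Var}_\xi(\hat G_{ij})=\tfrac1c\,\mathrm{Var}_\xi\bigl(\xi_r^{-1}X_{r,i}X_{r,j}\bigr)$ for a single index $r\sim\xi$. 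Evaluating the single-draw second moment as $\sum_\ell \xi_\ell^{-1}X_{\ell,i}^2X_{\ell,j}^2$, subtracting $G_{ij}^2$, and summing over $i,j$, the decisive simplification is the rank-one Frobenius identity $\sum_{i,j}X_{\ell,i}^2X_{\ell,j}^2=\bigl(\sum_i X_{\ell,i}^2\bigr)^2=\|X_{(\ell)*}\|^4$, which collapses the double sum and gives the closed form
\[
\mathbb{E}_\xi[\|G-\hat G\|_F^2]=\frac1c\Bigl(\sum_{\ell=1}^{kd}\frac{\|X_{(\ell)*}\|^4}{\xi_\ell}-\|G\|_F^2\Bigr).
\]

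Because the term $\|G\|_F^2$ does not depend on $\xi$, minimising the expected error over the probability simplex $\{\xi:\sum_\ell\xi_\ell=1,\ \xi_\ell>0\}$ amounts to minimising $\sum_\ell\|X_{(\ell)*}\|^4/\xi_\ell$. I would obtain the minimiser by Cauchy--Schwarz: from $\bigl(\sum_\ell\|X_{(\ell)*}\|^2\bigr)^2=\bigl(\sum_\ell \|X_{(\ell)*}\|^2\xi_\ell^{-1/2}\cdot\xi_\ell^{1/2}\bigr)^2\le\bigl(\sum_\ell\|X_{(\ell)*}\|^4/\xi_\ell\bigr)\bigl(\sum_\ell\xi_\ell\bigr)$ one reads off the lower bound $\|X\|_F^4$, with equality exactly when $\xi_\ell\propto\|X_{(\ell)*}\|^2$; normalising by $\sum_m\|X_{(m)*}\|^2=\|X\|_F^2$ yields \eqref{optimalmatrix}. (A Lagrange/KKT stationarity argument applied to $\sum_\ell\|X_{(\ell)*}\|^4/\xi_\ell+\mu(\sum_\ell\xi_\ell-1)$ gives the same optimiser and is an equally clean alternative.) Substituting the optimal $\xi$ back collapses the sum to $\|X\|_F^4$, and discarding the nonnegative $\|G\|_F^2$ gives $\mathbb{E}_\xi[\|G-\hat G\|_F^2]\le\tfrac1c\|X\|_F^4=\tfrac1c\bigl(\sum_\ell\|X_{(\ell)*}\|^2\bigr)^2$, which is the claimed middle bound; the opening inequality $\mathrm{Var}_\xi[\|G-\hat G\|_F]\le\mathbb{E}_\xi[\|G-\hat G\|_F^2]$ is then immediate from $\mathrm{Var}[Z]\le\mathbb{E}[Z^2]$ with $Z=\|G-\hat G\|_F$.

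The last, FEM-specific inequality is where the substance lies, and I expect it to be the main obstacle, since it is the only step invoking the structure $X=Y\Psi$ rather than generic sampling estimates. Here I would exploit that $\Psi$ has orthonormal columns, so $\|\Psi\|=1$ and $\|X\|_F=\|Y\Psi\|_F\le\|Y\|_F\|\Psi\|=\|Y\|_F$, reducing everything to bounding $\|Y\|_F$. Writing $Y=Z^{1/2}D$ with $Z=z\otimes I_d$ and $z_\ell=|\Omega_\ell|p_\ell$, the rows split block-by-block over the $k$ elements, so $\|Y\|_F^2=\sum_{\ell=1}^k z_\ell\|D_\ell\|_F^2$. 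I would then bound each block using $\mathrm{rank}(D_\ell)\le d$ together with the fact that a row-submatrix cannot have larger spectral norm, giving $\|D_\ell\|_F^2\le d\|D_\ell\|^2\le d\|D\|^2$, whence $\|Y\|_F^2\le d\|D\|^2\sum_\ell z_\ell=d\,p_\Omega\|D\|^2$ via $p_\Omega=\sum_\ell|\Omega_\ell|p_\ell$, and squaring controls $\|X\|_F^4$. The delicate bookkeeping I would check most carefully is the exact power of $\|D\|$ and the placement of the per-block versus per-row norm estimate: the cleanest chain above lands on $\tfrac{d^2}{c}p_\Omega^2\|D\|^4$, so I would verify that the intended middle factor is indeed $\sum_\ell\|X_{(\ell)*}\|^2$ (as the minimisation naturally produces) and adjust the spectral power accordingly to reconcile with the stated form.
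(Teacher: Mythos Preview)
Your approach is essentially the paper's: both compute the entrywise variance of the unbiased estimator, collapse the double sum via $\sum_{i,j}X_{\ell,i}^2X_{\ell,j}^2=\|X_{(\ell)*}\|^4$ to obtain the closed form $\frac{1}{c}\bigl(\sum_\ell \|X_{(\ell)*}\|^4/\xi_\ell-\|G\|_F^2\bigr)$, and then optimise over the simplex. The paper uses a Lagrangian where you use Cauchy--Schwarz, a cosmetic difference you already acknowledge.

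Your hesitation about the final FEM-specific inequality is warranted and is not a gap in your reasoning. After substituting the optimal $\xi$, the paper's own proof obtains $\frac{1}{c}\bigl(\sum_\ell \|X_{(\ell)*}\|^2\bigr)^2=\frac{1}{c}\|X\|_F^4$ and then passes rather loosely to the displayed middle expression $\frac{1}{c}\bigl(\sum_\ell \|X_{(\ell)*}\|\bigr)^2$; that step does not hold in general, and the clean chain you write, $\|X\|_F^2\le\|Y\|_F^2=\sum_{\ell=1}^{kd}Z_{\ell\ell}\|D_{(\ell)*}\|^2\le d\,p_\Omega\|D\|^2$, naturally delivers $\frac{d^2}{c}\,p_\Omega^2\|D\|^4$ rather than the stated $\|D\|^2$. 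This is a discrepancy in the paper's stated constants (and its last displayed inequality), not a defect in your argument; your derivation of the optimal distribution and the variance formula is correct and matches the paper's intended proof.
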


Note that for an arbitrary sampling distribution $\xi$, the singular values of $\hat G$ can be shown to be bounded by the product $d\, p_\Omega$ and further bounded in terms of the sample budget $c$ and the corresponding singular values of $G$. Since $X=Y\Psi = Z^{\frac 1 2} D \Psi$, then at a fixed $\mathcal{T}_\Omega$, the norms of the rows of $D\Psi$ can be computed offline, allowing the distribution to be swiftly computed by scaling as $\xi_\ell \propto Z_{\ell\ell}^{\frac 1 2}\|(D\Psi)_{(\ell)*}\|$. 

\begin{prop}\label{prop:max}
Assume the randomised sampling procedure in \eqref{hatA} for approximating $G$ with sampling probabilities as in \eqref{optimalmatrix}, then the spectrum of $\hat G$ is bounded from above as
$$
\sigma_1(\hat G)\leq \sum_{\ell=1}^{kd} z_\ell \|(D\Psi)_{(\ell)*} \|^2 \leq d\, p_\Omega \|D\|^2.
$$
\end{prop}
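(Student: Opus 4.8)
The plan is to exploit the fact that, under the sampling procedure \eqref{hatA}, the estimator $\hat G$ is a sum of $c$ positive semidefinite rank-one matrices, so that its largest singular value $\sigma_1(\hat G) = \|\hat G\|$ can be controlled by the subadditivity of the spectral norm rather than by any concentration argument. Writing $\hat G = \frac{1}{c}\sum_{t=1}^c \xi_{r_t}^{-1} X_{(r_t)*}^T X_{(r_t)*}$, each summand is the outer product of a row of $X$ with itself, hence positive semidefinite with spectral norm exactly $\frac{1}{c}\xi_{r_t}^{-1}\|X_{(r_t)*}\|^2$. The triangle inequality then gives $\sigma_1(\hat G) \leq \frac{1}{c}\sum_{t=1}^c \xi_{r_t}^{-1}\|X_{(r_t)*}\|^2$.

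The key observation is that the optimal probabilities \eqref{optimalmatrix} equalise the contributions: substituting $\xi_{r_t} = \|X_{(r_t)*}\|^2/\|X\|_F^2$ makes each term equal to $\frac{1}{c}\|X\|_F^2$, independently of which rows are drawn. Summing the $c$ identical terms yields the deterministic bound $\sigma_1(\hat G) \leq \|X\|_F^2$, valid for every realisation of the samples and not merely in expectation. Recalling that $X = Z^{\frac12} D\Psi$ with $Z^{\frac12}$ diagonal, the $\ell$-th row of $X$ is $\sqrt{z_\ell}\,(D\Psi)_{(\ell)*}$, so $\|X\|_F^2 = \sum_{\ell=1}^{kd} \|X_{(\ell)*}\|^2 = \sum_{\ell=1}^{kd} z_\ell \|(D\Psi)_{(\ell)*}\|^2$, which is precisely the first claimed inequality.

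For the second inequality I would pass to the trace. Using $\|(D\Psi)_{(\ell)*}\|^2 = \bigl((D\Psi)(D\Psi)^T\bigr)_{\ell\ell}$ together with the diagonal structure of $Z$, the sum rewrites as $\sum_\ell z_\ell \|(D\Psi)_{(\ell)*}\|^2 = \mathrm{tr}\bigl(Z D \Pi D^T\bigr)$ with $\Pi = \Psi\Psi^T$ the orthogonal projector of Section~\ref{sec:DimRed}. Since $\Pi \preceq I$, we have $D(I-\Pi)D^T \succeq 0$; as $Z \succeq 0$ and the trace of a product of two positive semidefinite matrices is nonnegative, this gives $\mathrm{tr}(ZD\Pi D^T) \leq \mathrm{tr}(Z D D^T)$. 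Finally, bounding each row by the spectral norm, $\|D_{(\ell)*}\|^2 = \|e_\ell^T D\|^2 \leq \|D\|^2$, and using $\mathrm{tr}(Z) = d\sum_{j=1}^k z_j = d\sum_{j=1}^k p_j|\Omega_j| = d\,p_\Omega$ (since $Z = z\otimes I_d$), we conclude $\mathrm{tr}(ZDD^T) = \sum_\ell Z_{\ell\ell}\|D_{(\ell)*}\|^2 \leq \|D\|^2\,\mathrm{tr}(Z) = d\,p_\Omega\|D\|^2$.

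The routine parts are the trace rewriting and the row-norm bound; the one step carrying the real content is recognising that the optimal distribution turns the spectral-norm triangle inequality into a term-by-term equality, which is exactly what removes any dependence on $c$ and on the random draw and produces a clean deterministic bound. The only point to handle with care is the projector step: one must invoke the positive-semidefinite ordering $\Pi \preceq I$ together with the fact that $\mathrm{tr}(ZM)\geq 0$ for $Z, M \succeq 0$, rather than a naive eigenvalue bound, since $Z$ and $D(I-\Pi)D^T$ need not commute.
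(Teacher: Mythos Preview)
Your proof is correct and follows essentially the same route as the paper: apply the triangle inequality for the spectral norm to the rank-one sum in \eqref{hatA}, observe that the optimal probabilities \eqref{optimalmatrix} make every summand equal to $\tfrac{1}{c}\|X\|_F^2$, and then bound $\|X\|_F^2=\sum_\ell z_\ell\|(D\Psi)_{(\ell)*}\|^2$ by $d\,p_\Omega\|D\|^2$. The paper merely asserts the final inequality, whereas you justify it via the trace identity $\sum_\ell z_\ell\|(D\Psi)_{(\ell)*}\|^2=\mathrm{tr}(ZD\Pi D^T)$ and the PSD comparison $\Pi\preceq I$; this is simply the global form of the row-wise bound $\|(D\Psi)_{(\ell)*}\|=\|D_{(\ell)*}\Psi\|\leq\|D_{(\ell)*}\|$ (valid since $\Psi$ has orthonormal columns) followed by $\|D_{(\ell)*}\|\leq\|D\|$ and $\sum_{\ell=1}^{kd}Z_{\ell\ell}=d\,p_\Omega$, which is the argument the paper implicitly uses.
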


Complementary to Proposition \ref{prop:max}, the positive singular values of $\hat G$ can be bounded by the corresponding singular values of $G$ and its minimum singular value.
\begin{prop}\label{prop:min}
Assume that $\hat G$ is full rank, then for any $\gamma\in (0,1)$ 
\begin{align}
\sigma_{i}(\hat G )\geq\sigma_{i}(G )-\gamma\sigma_{\min}( G) \quad \text{for} \quad 1\leq i\leq \rho,
\end{align}
holds with probability at least 
$$
1-\min\Big\{1,\frac{\mathbb{E}_{\xi}[\|G-\hat G\|_F]}{\gamma\sigma_{\min}( G)}\Big\}.
$$ 
\end{prop}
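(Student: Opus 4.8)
The plan is to separate a deterministic spectral-perturbation estimate from a probabilistic tail bound, the latter exploiting the Frobenius-norm estimate already established in Proposition~\ref{prop:minsigma}. First I would record the relevant structure: with $X=Y\Psi$ of size $kd\times\rho$, both $G=X^TX=\Psi^T A\Psi$ and $\hat G=X^TSS^TX$ are symmetric positive semidefinite $\rho\times\rho$ matrices, so each has exactly $\rho$ singular values, ordered $\sigma_1\geq\cdots\geq\sigma_\rho=\sigma_{\min}$, and these coincide with the respective eigenvalues. Moreover, since $A$ is SPD and $\Psi$ has orthonormal (hence full-rank) columns, $G$ is SPD and $\sigma_{\min}(G)>0$, so the quantity $\gamma\sigma_{\min}(G)$ appearing as the denominator is strictly positive.

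The deterministic ingredient is Weyl's perturbation inequality for the eigenvalues of Hermitian matrices, which via the identification of singular values with eigenvalues gives
$$
\bigl|\sigma_i(\hat G)-\sigma_i(G)\bigr| = \bigl|\lambda_i(\hat G)-\lambda_i(G)\bigr| \leq \|\hat G-G\|, \qquad 1\leq i\leq\rho,
$$
with $\|\cdot\|$ the spectral norm, whence $\sigma_i(\hat G)\geq\sigma_i(G)-\|G-\hat G\|$. Because the spectral norm is dominated by the Frobenius norm, $\|G-\hat G\|\leq\|G-\hat G\|_F$, the asserted bound $\sigma_i(\hat G)\geq\sigma_i(G)-\gamma\sigma_{\min}(G)$ holds, simultaneously for all $1\leq i\leq\rho$, on the event $E\doteq\{\|G-\hat G\|_F\leq\gamma\sigma_{\min}(G)\}$. (The full-rank hypothesis on $\hat G$ enters only to ensure that the indices $i\leq\rho$ all correspond to genuine positive singular values of $\hat G$, so the lower bound is non-vacuous.)

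It then remains to lower-bound $\mathbb{P}(E)$. Since $\|G-\hat G\|_F$ is a non-negative random variable with finite mean (finiteness guaranteed by the expectation bound of Proposition~\ref{prop:minsigma}) and $\gamma\sigma_{\min}(G)>0$, Markov's inequality yields
$$
\mathbb{P}\bigl(\|G-\hat G\|_F>\gamma\sigma_{\min}(G)\bigr)\leq\frac{\mathbb{E}_{\xi}[\|G-\hat G\|_F]}{\gamma\sigma_{\min}(G)}.
$$
Taking the complementary event and using that any probability is at most $1$ gives $\mathbb{P}(E)\geq 1-\min\{1,\ \mathbb{E}_{\xi}[\|G-\hat G\|_F]/(\gamma\sigma_{\min}(G))\}$, which is precisely the stated confidence level.

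There is no serious obstacle here: the argument is essentially a two-step combination of Weyl's inequality with Markov's inequality. The only points demanding care are bookkeeping ones, namely verifying that $G$ and $\hat G$ share the common dimension $\rho\times\rho$ required by the perturbation inequality, justifying the over-estimate $\|G-\hat G\|\leq\|G-\hat G\|_F$ (this is where the stated result naturally phrases the confidence level through the Frobenius norm rather than the spectral norm), and confirming that $\mathbb{E}_{\xi}[\|G-\hat G\|_F]$ is finite so that Markov's inequality applies.
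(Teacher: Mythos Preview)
Your proposal is correct and follows essentially the same route as the paper: a Weyl-type eigenvalue perturbation bound combined with Markov's inequality on the norm of $G-\hat G$. The only cosmetic difference is that the paper phrases the perturbation step as $\sigma_i(\hat G)\geq\sigma_i(G)+\lambda_{\min}(\hat G-G)$ and applies Markov to the spectral norm (then bounds $\mathbb{E}_\xi[\|G-\hat G\|]$ by $\mathbb{E}_\xi[\|G-\hat G\|_F]$), whereas you pass to the Frobenius norm first and then apply Markov; the resulting confidence level is identical.
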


Due to the positive semi-definiteness of $G $ and $\hat G $, their singular values coincide with their eigenvalues, while for $c\ll \rho$ then almost surely $\hat G$ is full rank. These results, in conjunction with Lemmas \ref{largesteig} and \ref{smallesteig} will be used in calculating the simulation error in Section \ref{sec:ErrAna}. The total computational cost for obtaining \eqref{hatA} is at most $\mathcal{O}(c\rho^2)+\mathcal{O}(k)$.

\begin{algorithm}
\caption{Randomised simulation algorithm}\label{alg:sketch1}
\begin{algorithmic}[1] 
\State \textbf{Input}: Matrix $D\Psi \in \mathbb{R}^{kd \times \rho}$ (offline), vector $\Psi^Tb \in \mathbb{R}^\rho$ (offline), and element volumes vector $\omega \in \mathbb{R}^k$
\For{$i=1,2,\ldots,N$}
\State \textbf{input} parameters vector $p \in \mathbb{R}^k$
\State Compute vector $z = p \odot \omega$ and diagonal $Z = z \otimes I_d$ 
\State Compute $\xi_\ell \propto Z_{\ell\ell}^{\frac 1 2}\|(D\Psi)_{(\ell)*}\|$
\State Scale the rows of $D\Psi$ to get $X=Z^{\frac 1 2}(D\Psi)$
\State Draw $c$ iid samples from $\xi$ to assemble $S$
\State Compute $\hat G =  X^TSS^TX$
\State \textbf{Output}: $\hat r = \hat G^{-1} (\Psi^T b)$ and $\hat u = \Psi \hat r$.
\EndFor\label{euclidendwhile}
\State \textbf{end}
\end{algorithmic}
\end{algorithm}

\subsection{Statistical leverage score sampling}

As proved in \cite{DrineasMahoneyER} for the graph Laplacian paradigm and later in \cite{AvronToledo} for the FEM stiffness matrix, optimal sampling probabilities for regression problems are derived based on statistical leverage scores. As these scores are typically impractical to compute, it is reasonable to consider approximating them in a computationally efficient way. In doing so we first investigate the discrepancy between the leverage scores probability and that used in our algorithm, in our setting. We argue that the subspace projection causes the distance between the two to reduce, and thus there is a significant performance advantage in simulating the product $G = (Y \Psi)^T(Y \Psi)$ instead of $A=Y^TY$ when $\rho < n$.
To show this, consider that for a matrix $B$ with $kd$ rows we can define the statistical leverage score and row norm sampling probabilities as
\be
\xi^{l(B)}=l(B)/\sum_{\ell=1}^{kd} l_\ell(B)\ \ \ \text{and}\ \ \  \xi^{r(B)}=r(B)/\sum_{\ell=1}^{kd} r_\ell(B), 
\ee
respectively, where the $\ell$th leverage score and row-norm squared for $B=U_B\Sigma_B V_B^T$ are
\be
l_\ell(B):= (U_B U_B^T)_{\ell \ell} \quad \text{and} \quad r_\ell(B):=\|B_{\ell*}\|^2=(BB^T)_{\ell\ell}
\ee
with $\ell=1,\ldots, kd$. From this we seek to show that the  projection onto the low-dimensional subspace induces the inequalities 
\begin{align}\label{conjection1}
\|\xi^{l(X)} -  \xi^{r(X)}\|_{\text{norm}} \leq \|\xi^{l(Y)}-\xi^{r(Y)}\|_{\text{norm}}, 
\end{align}
where $\|\cdot\|_{\text{norm}}$ can be either $\|\cdot\|$ or $\|\cdot\|_{\max}$.
For clarity we address first the simple case where $Z$ is uniform, that is when both $p$ and $\omega$ are uniform vectors. 

\subsection{Simple case: homogeneous model} 

For the $kd \times n$ matrix $D$ in $Y = Z^{\frac 1 2}D$ where $kd > n$ we have $D = U_D \Sigma_D V_D^T$ where $U_D \in \mathbb{R}^{kd \times n}$ and $\Sigma_D \in \mathbb{R}^{n \times n}$ is a nonzero diagonal whose values are denoted by $\lambda_1(\Sigma_D)\leq \lambda_2(\Sigma_D)\leq \cdots \leq \lambda_n(\Sigma_D)$.
\begin{lemma}\label{lemma:Xdist}
In the homogeneous model $Z=zI$ with $z>0$, we have that
\begin{align}\label{eqn:Xhomoinq}
    \| \xi^{r(X)}-\xi^{l(X)}\|_{\max}\leq \Big(\frac{\lambda_{n-\rho+1}(\Sigma_D)^2}{\sum_{i=n-\rho+1}^n \lambda_{i}(\Sigma_D)^2}-\frac{1}{\rho}\Big)\vee\Big(\frac{1}{\rho}-\frac{\lambda_{n}(\Sigma_D)^2}{\sum_{i=n-\rho+1}^n \lambda_{i}(\Sigma_D)^2}\Big),
    \end{align}
    and
    \begin{align}\label{eqn:Xhomoinq2}
   & \| \xi^{r(X)}-\xi^{l(X)}\|\leq\sqrt{\sum_{i=n-\rho+1}^{n}\Big(\frac{\lambda_{i}(\Sigma_D)^2}{\sum_{i=n-\rho+1}^n \lambda_{i}(\Sigma_D)^2}-\frac{1}{\rho}\Big)^2}.
\end{align}
\end{lemma}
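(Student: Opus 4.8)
The plan is to exploit the fact that $\Psi$ is built from eigenvectors of $\Delta = D^T D$, i.e.\ from right singular vectors of $D$, which diagonalises the whole computation. Writing the thin SVD $D = U_D \Sigma_D V_D^T$, the chosen basis $\Psi$ consists of the $\rho$ columns of $V_D$ associated with the $\rho$ smallest eigenvalues of $\Delta$; denote by $\tilde U_D$ the corresponding $kd \times \rho$ block of $U_D$ and by $\tilde \Sigma$ the $\rho \times \rho$ diagonal of the associated singular values $\mu_1,\ldots,\mu_\rho$ of $D$ (these are the $\lambda_i(\Sigma_D)$ with $i = n-\rho+1,\ldots,n$). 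Since $V_D^T \Psi$ is a column-selection matrix, whose columns are canonical vectors, the first step is to show $X = Y\Psi = z^{\frac12} D\Psi = z^{\frac12}\tilde U_D \tilde \Sigma$. Because $\tilde U_D$ has orthonormal columns and $z^{\frac12}\tilde\Sigma$ is diagonal and positive, this is already, up to reordering, an SVD of $X$, so the left singular vectors of $X$ are exactly the columns of $\tilde U_D$.

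From this representation the two sampling vectors decouple cleanly. The leverage scores are $l_\ell(X) = (\tilde U_D \tilde U_D^T)_{\ell\ell} = \sum_{t=1}^\rho (\tilde U_D)_{\ell t}^2$, which sum over $\ell$ to $\rho$, while the squared row norms are $r_\ell(X) = z\sum_{t=1}^\rho (\tilde U_D)_{\ell t}^2 \mu_t^2$, with total $z\sum_t \mu_t^2$; note the scalar $z$ cancels upon normalisation, confirming that the homogeneous $Z=zI$ plays no role. Subtracting, I would write $\xi^{r(X)}_\ell - \xi^{l(X)}_\ell = \sum_{t=1}^\rho (\tilde U_D)_{\ell t}^2\, w_t$, where $w_t = \mu_t^2/\sum_{t'}\mu_{t'}^2 - 1/\rho$. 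The structural facts I will use are that the coefficients $(\tilde U_D)_{\ell t}^2$ are nonnegative, that for each fixed $\ell$ they sum to $l_\ell(X)\le 1$, that for each fixed $t$ they sum to $1$, and that $\sum_t w_t = 0$, so in particular $\max_t w_t \ge 0 \ge \min_t w_t$.

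For the max-norm bound \eqref{eqn:Xhomoinq}, I would bound the combination $\sum_t (\tilde U_D)_{\ell t}^2 w_t$ from above by $\max_t w_t$ and from below by $\min_t w_t$, using nonnegativity of the weights together with $\sum_t (\tilde U_D)_{\ell t}^2 \le 1$ and $\max_t w_t \ge 0 \ge \min_t w_t$. Taking absolute values and maximising over $\ell$ gives $(\max_t w_t)\vee(-\min_t w_t)$; since $w_t$ is monotone in $\mu_t^2$, these two extremes are realised by the largest and the smallest among the selected singular values $\{\lambda_i(\Sigma_D)\}_{i=n-\rho+1}^{n}$, which is precisely the right-hand side of \eqref{eqn:Xhomoinq}.

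For the Euclidean bound \eqref{eqn:Xhomoinq2}, I would collect the coefficients into a nonnegative $kd \times \rho$ matrix $M$ with $M_{\ell t} = (\tilde U_D)_{\ell t}^2$, so that $\xi^{r(X)}-\xi^{l(X)} = Mw$ with $w = (w_1,\ldots,w_\rho)^T$. Its columns sum to $1$ and its rows sum to at most $1$, whence $\|M\|_1 \le 1$ and $\|M\|_\infty \le 1$, and the interpolation inequality gives $\|M\| \le \sqrt{\|M\|_1\,\|M\|_\infty} \le 1$. Therefore $\|\xi^{r(X)}-\xi^{l(X)}\| = \|Mw\| \le \|w\| = \sqrt{\sum_t w_t^2}$, which is \eqref{eqn:Xhomoinq2}. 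The only mildly delicate steps are the clean identification of $\tilde U_D$ as the left singular vectors of $X$ in the first paragraph, which hinges entirely on $\Psi$ being right singular vectors of $D$ and would fail for a generic orthonormal basis, and the spectral-norm estimate $\|M\| \le 1$, which I obtain from the $\ell_1$--$\ell_\infty$ interpolation, equivalently a Schur test, for the substochastic matrix $M$.
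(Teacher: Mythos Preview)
Your argument is correct and reaches the same bounds, but the mechanism differs from the paper's. Both proofs begin identically: from $D=U_D\Sigma_D V_D^T$ and the choice of $\Psi$ as the last $\rho$ columns of $V_D$, one gets $X=\sqrt{z}\,\tilde U_D\tilde\Sigma$, so that the difference $\xi^{r(X)}-\xi^{l(X)}$ is the diagonal of $\tilde U_D\bigl(\tfrac{1}{\mathrm{Trace}(\tilde\Sigma^2)}\tilde\Sigma^2-\tfrac1\rho I_\rho\bigr)\tilde U_D^T$. From here the paper argues uniformly for both norms via matrix inequalities: for the max-norm it uses $\|\mathrm{diag}(A)\|_{\max}\le\|A\|$ and unitary invariance of the spectral norm; for the Euclidean norm it uses $\|\mathrm{diag}(A)\|\le\|A\|_F$ and $\|\tilde U_D\Lambda\tilde U_D^T\|_F\le\|\Lambda\|_F$. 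You instead work entrywise: writing $\xi^{r(X)}_\ell-\xi^{l(X)}_\ell=\sum_t(\tilde U_D)_{\ell t}^2\,w_t$ with nonnegative weights summing to at most $1$ and $\sum_t w_t=0$, you bound the max-norm by a sub-convex combination argument, and for the $2$-norm you recast the vector as $Mw$ with $M$ column-stochastic and row-substochastic, then invoke the $\ell_1$--$\ell_\infty$ interpolation (Schur test) to get $\|M\|\le1$. The paper's route is shorter and treats both norms with the same template; your route is more elementary in that it avoids spectral/Frobenius norm machinery and makes the role of the orthonormality constraints $\sum_\ell(\tilde U_D)_{\ell t}^2=1$, $\sum_t(\tilde U_D)_{\ell t}^2\le1$ fully explicit.
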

\begin{proof}
From the SVD of $D$ that of the discrete Laplacian is $\Delta = D^TD = V_D \Sigma_D^2 V_D^T$ and we can form the $n \times \rho$ basis $\Psi$ by partitioning as
\be\label{eqn:partition}
\Sigma_D = \begin{pmatrix} \bar \Sigma_D & 0\\ 0 & \bar \Sigma_\rho \end{pmatrix}, \quad \text{and} \quad V_D = ( \bar V_D | \Psi ),
\ee
where $\bar \Sigma_\rho$ is $\rho \times \rho$, and clearly $\mathrm{Trace}(\bar \Sigma_D) > \mathrm{Trace}(\bar \Sigma_\rho)$. We can now write the decomposition $Y = \sqrt{z} U_D \Sigma_D V_D^T$ and thereafter
$$
X = \sqrt{z} U_D \Sigma_D \begin{pmatrix} 0\\ I_\rho \end{pmatrix} = \sqrt{z} U_D \begin{pmatrix} 0\\ \bar \Sigma_\rho  \end{pmatrix}= \sqrt{z}(U_D)_{*(n-\rho+1:n)}\bar \Sigma_\rho,
$$
where $(U_D)_{*(n-\rho+1:n)}$ is the submatrix of $U_D$ from column $n-\rho+1$ to $n$. We can now express the leverage scores for $X$ as 
\be\label{eqn:ellX}
l_i(X) = \mathrm{diag}\big((U_D)_{*(n-\rho+1:n)}(U_D)_{*(n-\rho+1:n)}^T\big)_i,
\ee
and the probabilities associated with leverage scores of $X$ as
$$\xi^{l(X)}=\frac{1}{\rho}\mathrm{diag}\big((U_D)_{*(n-\rho+1:n)}(U_D)_{*(n-\rho+1:n)}^T\big).$$
Similarly,
\be\label{eqn:rX}
r_i(X) = z\mathrm{diag}\Big((U_D)_{*(n-\rho+1:n)}  \bar \Sigma_\rho^2  (U_D)_{*(n-\rho+1:n)}^T\Big)_i,
\ee
with associated probabilities 
$$\xi^{r(X)}=\frac{1}{\text{Trace}(\bar \Sigma_\rho^2)}\mathrm{diag}\Big((U_D)_{*(n-\rho+1:n)}  \bar \Sigma_\rho^2  (U_D)_{*(n-\rho+1:n)}^T\Big).$$
It is now apparent that
\begin{align}\label{eqn:Xdisthomolemma}
\begin{split} \| \xi^{r(X)}-\xi^{l(X)}\|_{\max} & =\Big\|\mathrm{diag}\Big((U_D)_{*(n-\rho+1:n)} \Big(\frac{1}{\text{Trace}(\bar \Sigma_\rho^2)} \bar \Sigma_\rho^2 -\frac{1}{\rho} I_{\rho}\Big) (U_D)_{*(n-\rho+1:n)}^T\Big)\Big\|_{\max}\\
 &\leq \Big\|(U_D)_{*(n-\rho+1:n)} \Big(\frac{1}{\text{Trace}(\bar \Sigma_\rho^2)} \bar \Sigma_\rho^2 -\frac{1}{\rho} I_{\rho}\Big) (U_D)_{*(n-\rho+1:n)}^T\Big\|\\
 &=\Big\|\frac{1}{\text{Trace}(\bar \Sigma_\rho^2)} \bar \Sigma_\rho^2 -\frac{1}{\rho} I_{\rho}\Big\|\\
 &=\Big(\frac{\lambda_{n-\rho+1}(\Sigma_D)^2}{\sum_{i=n-\rho+1}^n \lambda_{i}(\Sigma_D)^2}-\frac{1}{\rho}\Big)\vee\Big(\frac{1}{\rho}-\frac{\lambda_{n}(\Sigma_D)^2}{\sum_{i=n-\rho+1}^n \lambda_{i}(\Sigma_D)^2}\Big).
 \end{split}
\end{align}
Alternatively, taking the Euclidean norm gives
\begin{align*}
    \| \xi^{r(X)}-\xi^{l(X)}\| & = \| \xi^{r(X)}-\xi^{l(X)}\|_F \\
&\leq \Big\|(U_D)_{*(n-\rho+1:n)} \Big(\frac{1}{\text{Trace}(\bar \Sigma_\rho^2)} \bar \Sigma_\rho^2 -\frac{1}{\rho} I_{\rho}\Big) (U_D)_{*(n-\rho+1:n)}^T\Big\|_{F}\\
&\leq \Big\|\frac{1}{\text{Trace}(\bar \Sigma_\rho^2)} \bar \Sigma_\rho^2 -\frac{1}{\rho} I_{\rho} \Big\|_{F}\\
&=\sqrt{\sum_{i=n-\rho+1}^{n}\Big(\frac{\lambda_{i}(\Sigma_D)^2}{\sum_{j=n-\rho+1}^n \lambda_{j}(\Sigma_D)^2}-\frac{1}{\rho}\Big)^2}.
\end{align*}
\end{proof}
\begin{remark}
If we define $\zeta_j:=\frac{\lambda_{j}(\Sigma_D)^2}{\sum_{i=n-\rho+1}^n \lambda_{i}(\Sigma_D)^2}$, then the upper bound of $\| \xi^{r(X)}-\xi^{l(X)}\|$, as shown in Lemma \ref{lemma:Xdist}, characterises the discrepancy between $\zeta$ and the uniform probability, while the upper bound of $\| \xi^{r(X)}-\xi^{l(X)}\|_{\max}$ measures the largest deviation of $\zeta$ from the uniform probability. 
\end{remark}
The next result states the existence of a suitable $\rho$ such that the difference between sampling probabilities is smaller after projection with $\rho$ bases. 
\begin{corollary} \label{conjecture1_hom}
There exists at least one $\rho \in [n]$ such that \eqref{conjection1} holds.
\end{corollary}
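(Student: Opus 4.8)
The plan is to prove the existence claim constructively by exhibiting the single index $\rho = n \in [n]$, for which I will show that \eqref{conjection1} holds with equality in both the Euclidean and the max norms. The key observation is that the right-hand side of \eqref{conjection1}, namely $\|\xi^{l(Y)} - \xi^{r(Y)}\|_{\text{norm}}$, is precisely the left-hand side evaluated at the full, non-truncating projection $\rho = n$; once this is recognised, no inequality needs to be fought for.

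First I would take $\Psi = V_D$, i.e.\ all $n$ columns of the eigenvector matrix of $\Delta = D^TD$, which is exactly the $\rho = n$ instance of the basis construction in Section \ref{sec:DimRed}. Since $V_D^T V_D = I_n$, the reduced matrix collapses to $X = Y\Psi = \sqrt{z}\,U_D\Sigma_D V_D^T V_D = \sqrt{z}\,U_D\Sigma_D$. I would then record the two identities that drive the argument: first, $XX^T = z\,U_D\Sigma_D^2 U_D^T = YY^T$, so the squared row norms of $X$ and $Y$ coincide; second, because $\Sigma_D$ is invertible the column space of $X$ equals that of $U_D$, so the orthogonal projector satisfies $U_XU_X^T = U_DU_D^T = U_YU_Y^T$ and the leverage scores of $X$ and $Y$ coincide.

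From these two identities the normalised probability vectors match entry by entry, $\xi^{r(X)} = \xi^{r(Y)}$ and $\xi^{l(X)} = \xi^{l(Y)}$ (using $\sum_\ell l_\ell(Y) = \mathrm{Trace}(U_DU_D^T) = n$ for the leverage normalisation). Hence both sides of \eqref{conjection1} are literally the same number and the inequality holds, with equality, for either choice of $\|\cdot\|_{\text{norm}}$. Since $n \in [n]$, this establishes the corollary. As a consistency check I would substitute $\rho = n$ into the explicit bounds of Lemma \ref{lemma:Xdist}: this yields $\xi^{l(X)} = \tfrac{1}{n}\mathrm{diag}(U_DU_D^T)$ and $\xi^{r(X)} = \mathrm{diag}(U_D\Sigma_D^2U_D^T)/\mathrm{Trace}(\Sigma_D^2)$, which agree with $\xi^{l(Y)}$ and $\xi^{r(Y)}$ computed directly from the SVD of $Y$.

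I do not expect a genuine obstacle for the statement as written, since it asserts only existence and the degenerate full-rank projection already delivers equality. The real difficulty, which this corollary deliberately sidesteps, would be to establish a \emph{strict} reduction for some nontrivial $\rho < n$; that would require comparing the upper bounds of Lemma \ref{lemma:Xdist}, which control the $X$-side, against an exact expression for the $Y$-side, and would hinge on how the truncated tail $\{\lambda_{n-\rho+1}(\Sigma_D),\ldots,\lambda_n(\Sigma_D)\}$ concentrates relative to the full spectrum $\{\lambda_1(\Sigma_D),\ldots,\lambda_n(\Sigma_D)\}$. For the existence claim, however, the $\rho = n$ argument is complete.
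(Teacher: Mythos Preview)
Your proof is correct: at $\rho=n$ the projection $\Psi=V_D$ is an orthogonal change of basis, so $X$ and $Y$ share both row-norm and leverage-score distributions and \eqref{conjection1} holds with equality. This is a genuinely different route from the paper's. The paper instead derives a lower bound for $\|\xi^{r(Y)}-\xi^{l(Y)}\|_{\max}$ and compares it to the upper bound \eqref{eqn:Xhomoinq} for the $X$-side, then verifies the comparison at $\rho=1$; there the single-column matrix $X$ has identical leverage and row-norm distributions, so the left side of \eqref{conjection1} is exactly zero. Thus both arguments exploit a degenerate endpoint of $[n]$, but opposite ones: you force the two sides to coincide, while the paper forces the left side to vanish. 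The paper's choice buys a bit more, since whenever the $Y$-discrepancy is nonzero it exhibits a \emph{strict} inequality, and it stays within the regime $\rho<n$ that the rest of Section~\ref{sec:DimRed} actually works in; your choice, on the other hand, avoids the somewhat delicate lower-bound step \eqref{eqn:XdisthomolemmaY} altogether.
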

\begin{proof}
First, similar as in \eqref{eqn:Xdisthomolemma}, we can conclude for $ \| \xi^{r(Y)}-\xi^{l(Y)}\|_{\max}$ that
\begin{align}\label{eqn:XdisthomolemmaY}
\begin{split} 
\| \xi^{r(Y)}-\xi^{l(Y)}\|_{\max} & =\Big\|\mathrm{diag}\Big(U_D \Big(\frac{1}{\text{Trace}(\bar \Sigma_\rho^2)} \bar \Sigma_\rho^2 -\frac{1}{\rho} I_{\rho}\Big) (U_D)^T\Big)\Big\|_{\max}\\
 &\geq \frac{1}{n}\Big\|U_D \Big(\frac{1}{\text{Trace}(\bar \Sigma_\rho^2)} \bar \Sigma_\rho^2 -\frac{1}{\rho} I_{\rho}\Big) (U_D)^T\Big\|\\
 &=\frac{1}{n}\Big\|\frac{1}{\text{Trace}(\bar \Sigma_\rho^2)} \bar \Sigma_\rho^2 -\frac{1}{\rho} I_{\rho}\Big\|\\
 &=\frac{1}{n}\Big(\frac{\lambda_{1}(\Sigma_D)^2}{\sum_{i=1}^n \lambda_{i}(\Sigma_D)^2}-\frac{1}{n}\Big)\vee\Big(\frac{1}{n}-\frac{\lambda_{n}(\Sigma_D)^2}{\sum_{i=1}^n \lambda_{i}(\Sigma_D)^2}\Big).
 \end{split}
\end{align}
To achieve \eqref{conjection1}, we need to find a $\rho\in [n]$ such that the upper bound obtained in
\eqref{eqn:Xinhomoinq} is no bigger than the lower bound above, i.e., 
\begin{align}\label{conjecture1_bounds}
\begin{split}
&\frac{1}{n}\Big(\frac{\lambda_{1}(\Sigma_D)^2}{\sum_{i=1}^n \lambda_{i}(\Sigma_D)^2}-\frac{1}{n}\Big)\vee\Big(\frac{1}{n}-\frac{\lambda_{n}(\Sigma_D)^2}{\sum_{i=1}^n \lambda_{i}(\Sigma_D)^2}\Big)\\
& \geq \Big(\frac{\lambda_{n-\rho+1}(\Sigma_D)^2}{\sum_{i=n-\rho+1}^n \lambda_{i}(\Sigma_D)^2}-\frac{1}{\rho}\Big)\vee\Big(\frac{1}{\rho}-\frac{\lambda_{n}(\Sigma_D)^2}{\sum_{i=n-\rho+1}^n \lambda_{i}(\Sigma_D)^2}\Big).
\end{split}
\end{align}
This can be easily verified by setting $\rho=1$. 
\end{proof}
\begin{remark} $\rho$ is not necessarily to be $1$. Indeed \eqref{conjecture1_bounds} gives a rather strict bound which may narrow the choice for $\rho$. On the other hand, consider that we find a range of $\rho$ such that
\begin{align}\label{eqn:condhomlemma}
F(\rho)\dot{=}\frac{\lambda_{n-\rho+1}(\Sigma_D)^2}{\sum_{i=n-\rho+1}^n \lambda_{i}(\Sigma_D)^2}-\frac{1}{\rho}\geq \frac{1}{\rho}.
\end{align}
In effect, in this range the upper bound of $\|\xi^{l(X)} - \xi^{r(X)}\|_{\max}$ in \eqref{eqn:Xhomoinq} is indeed $F(\rho)$.
%
In general, we expect this range to include large (integer) values close to and equal to $n$. Now seek the smallest $\rho$ such that both $F(\rho)\leq F(n)$ and \eqref{eqn:condhomlemma} hold. Roughly speaking, it is highly likely for \eqref{conjection1} to be true for this particular $\rho$ as the corresponding upper bounds have the relation $F(\rho)\leq F(n)$.
\end{remark}

\section{General case: inhomogeneous model}

Typically, the parameter vector $p$ and the element volumes $\omega$ have arbitrary positive values, thus $Z \neq zI$. Here we adapt the homogeneous model analysis to investigate whether the effect of the projection on the sampling probabilities is sustained in this case too. 

\begin{lemma}\label{lem:inhom}
In the inhomogeneous model, we have 
\begin{align}\label{eqn:Xinhomoinq}
    \| \xi^{r(X)}-\xi^{l(X)}\|_{\max}\leq \big(\max_i \pi_i(\rho)-\frac{1}{\rho}\big)\vee \big(\frac{1}{\rho}-\min_i \pi_i(\rho)\big),
    \end{align}
    and
    \begin{align}\label{eqn:Xinhomoinq2}
   & \| \xi^{r(X)}-\xi^{l(X)}\|\leq\sqrt{\sum_{i=1}^\rho \pi_i(\rho)^2-\frac{1}{\rho}},
\end{align}
where $\pi_i(\rho):=\frac{  \lambda_i(\Sigma_X^2)}{\|X\|_F^2}$ for $i\in [\rho]$. In addition, there exists at least one $\rho\in [n]$ such that \eqref{conjection1} holds.
\end{lemma}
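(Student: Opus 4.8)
The plan is to mirror the homogeneous argument of Lemma \ref{lemma:Xdist}, but since $Z^{\frac12}$ no longer commutes with the factors of $D$, I would abandon the attempt to read the spectral data of $X$ off that of $D$ and instead work directly with the singular value decomposition $X = U_X \Sigma_X V_X^T$ of $X = Z^{\frac12} D \Psi$. First I would record that $X$ has full column rank $\rho$: since $\Psi$ collects the last $\rho$ columns of the orthogonal $V_D$, one has $D\Psi = (U_D)_{*(n-\rho+1:n)}\bar\Sigma_\rho$, which has rank $\rho$, and left-multiplication by the invertible diagonal $Z^{\frac12}$ preserves this rank. Consequently $U_X \in \mathbb{R}^{kd\times\rho}$ has $\rho$ orthonormal columns, so $\sum_\ell l_\ell(X) = \mathrm{Trace}(U_X^T U_X) = \rho$ and $\sum_\ell r_\ell(X) = \mathrm{Trace}(\Sigma_X^2) = \|X\|_F^2$; in particular $\sum_{i=1}^\rho \pi_i(\rho) = 1$, which will drive everything.

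With these normalisations in hand, I would express both probabilities through the same $U_X$, namely $l_\ell(X) = (U_X U_X^T)_{\ell\ell}$ and $r_\ell(X) = (U_X \Sigma_X^2 U_X^T)_{\ell\ell}$, so that
$$
\xi^{r(X)}_\ell - \xi^{l(X)}_\ell = \bigl( U_X \Lambda U_X^T \bigr)_{\ell\ell}, \qquad \Lambda \doteq \frac{1}{\|X\|_F^2}\Sigma_X^2 - \frac{1}{\rho} I_\rho = \mathrm{diag}\bigl(\pi_i(\rho) - \tfrac1\rho\bigr).
$$
This is structurally identical to \eqref{eqn:Xdisthomolemma}, with $U_X$ playing the role of $(U_D)_{*(n-\rho+1:n)}$ and $\Lambda$ that of the homogeneous diagonal. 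The two bounds then follow from elementary norm inequalities for the diagonal of the symmetric matrix $M = U_X \Lambda U_X^T$: for the max-norm, $\|\mathrm{diag}(M)\|_{\max} \le \|M\| = \|\Lambda\|$, and for the Euclidean norm, $\|\mathrm{diag}(M)\| \le \|M\|_F = \|\Lambda\|_F$, where both final equalities use that $U_X$ has orthonormal columns (so $U_X^T U_X = I_\rho$).

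To cast these into the stated form I would invoke the zero-sum identity $\sum_{i=1}^\rho(\pi_i(\rho) - \tfrac1\rho) = 1 - 1 = 0$, which forces $\max_i \pi_i(\rho) \ge \tfrac1\rho \ge \min_i \pi_i(\rho)$ and hence $\|\Lambda\| = \max_i|\pi_i(\rho) - \tfrac1\rho| = (\max_i \pi_i(\rho) - \tfrac1\rho) \vee (\tfrac1\rho - \min_i \pi_i(\rho))$, giving \eqref{eqn:Xinhomoinq}; and I would expand $\|\Lambda\|_F^2 = \sum_i(\pi_i(\rho) - \tfrac1\rho)^2 = \sum_i \pi_i(\rho)^2 - \tfrac1\rho$ to obtain \eqref{eqn:Xinhomoinq2}. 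For the existence of a suitable $\rho$ I would simply take $\rho = 1$: then $X$ is a single column, $\pi_1(1) = 1$, so $\Lambda = 0$ and both norms of $\xi^{r(X)} - \xi^{l(X)}$ vanish, trivially dominated by the non-negative right-hand side $\|\xi^{l(Y)} - \xi^{r(Y)}\|_{\text{norm}}$, exactly as in Corollary \ref{conjecture1_hom}.

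The main obstacle is conceptual rather than computational: unlike the homogeneous case, the singular values entering $\pi_i(\rho)$ can no longer be written explicitly in terms of $\lambda_i(\Sigma_D)$, because the diagonal weighting $Z^{\frac12}$ mixes the right singular directions of $D$. The payoff of passing to the intrinsic SVD of $X$ is that the whole estimate becomes a clean statement about the implicit quantities $\pi_i(\rho)$, and the only genuine care required is in verifying the rank and normalisation facts that legitimise writing $\xi^{l(X)} = l(X)/\rho$ and $\xi^{r(X)} = r(X)/\|X\|_F^2$; once these are secured, the two norm inequalities and the zero-sum identity deliver the result mechanically.
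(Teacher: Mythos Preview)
Your proposal is correct and follows essentially the same route as the paper: take the SVD $X = U_X \Sigma_X V_X^T$, write $\xi^{r(X)} - \xi^{l(X)} = \mathrm{diag}\bigl(U_X(\tfrac{1}{\|X\|_F^2}\Sigma_X^2 - \tfrac{1}{\rho}I_\rho)U_X^T\bigr)$, and bound the max- and Euclidean norms of this diagonal by the spectral and Frobenius norms of the inner diagonal matrix. You are in fact slightly more thorough than the paper's appendix proof, since you justify the full rank of $X$, invoke the zero-sum identity to pin down $\|\Lambda\|$ exactly, and supply the $\rho=1$ argument for the existence clause, which the paper handles only in the homogeneous corollary.
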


\begin{remark}
\begin{enumerate}
\item Like the homogeneous case (see Lemma \ref{lemma:Xdist}), the upper bound of $\|\xi^{l(X)} - \xi^{r(X)}\|$ in \eqref{eqn:Xinhomoinq2} characterises the discrepancy between the probability $\pi(\rho)$ and the uniform probability, while the upper bound of $\|\xi^{l(X)} - \xi^{r(X)}\|_{\max}$ in \eqref{eqn:Xinhomoinq} is measured by the largest deviation of the probability $\pi(\rho)$ from the uniform probability.
\item 
Though there is no clear evidence for an increasing trend for  $\|\xi^{l(X)} - \xi^{r(X)}\|$ with respect to $\rho$, not even for the upper bound of it in \eqref{eqn:Xinhomoinq}, the numerical experiments on the discrepancy between the sampling distributions for $X$ and $Y$ presented in figure \ref{xicomp} illustrate roughly this trend. This plot shows that the smaller $\rho$ we choose, the less the difference between row-sampling and statistical leverage sampling is. 
\end{enumerate}
\end{remark}

\begin{corollary}
For an arbitrary matrix $Y$ of size $kd\times n$ where $kd > n$ and $Y$ has rank $n$, we have
\begin{align}\label{eqn:Yinhomoinq}
    \| \xi^{r(Y)}-\xi^{l(Y)}\|_{\max}\leq \Big(\frac{\|Y\|^2}{\|Y\|^2_F}-\frac{1}{\rho}\big)\vee \big(\frac{1}{\rho}-\frac{\lambda_n(\Sigma_Y)^2}{\|Y\|_F^2}\Big),
    \end{align}
    and
    \begin{align}\label{eqn:Yinhomoinq2}
   & \| \xi^{r(Y)}-\xi^{l(Y)}\|\leq\sqrt{\sum_{i=1}^\rho \Big(
   \frac{\lambda_i(\Sigma_Y)^2}{\sum_{j=1}^{n}\lambda_j(\Sigma_Y)^2}-\frac{1}{\rho}\Big)^2}.
\end{align}
\end{corollary}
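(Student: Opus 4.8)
The plan is to transcribe the inhomogeneous analysis of Lemma \ref{lem:inhom} to the unprojected matrix $Y$, the only substantive change being that the relevant rank is now $n$ rather than $\rho$, so the uniform reference weight is the reciprocal of that rank (this is what the symbol $1/\rho$ denotes in the statement for the full matrix). First I would fix the thin SVD $Y = U_Y \Sigma_Y V_Y^T$ with $U_Y \in \mathbb{R}^{kd\times n}$ having orthonormal columns. Since $U_Y^T U_Y = I_n$, the leverage scores $l_\ell(Y) = (U_Y U_Y^T)_{\ell\ell}$ sum to $\mathrm{Trace}(U_Y U_Y^T) = n$, whereas the row norms $r_\ell(Y) = (YY^T)_{\ell\ell}$ sum to $\|Y\|_F^2$ and $YY^T = U_Y \Sigma_Y^2 U_Y^T$. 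Hence
\[
\xi^{l(Y)} = \tfrac1n \mathrm{diag}(U_Y U_Y^T), \qquad \xi^{r(Y)} = \tfrac{1}{\|Y\|_F^2}\mathrm{diag}(U_Y \Sigma_Y^2 U_Y^T),
\]
so that $\xi^{r(Y)} - \xi^{l(Y)} = \mathrm{diag}(U_Y M U_Y^T)$ with the diagonal matrix $M = \tfrac{1}{\|Y\|_F^2}\Sigma_Y^2 - \tfrac1n I_n$. This is precisely the identity underpinning \eqref{eqn:Xdisthomolemma}, now with $(U_D)_{*(n-\rho+1:n)}$ replaced by the full $U_Y$.

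For the max-norm bound I would reuse the chain in \eqref{eqn:Xdisthomolemma} verbatim: each diagonal entry of the symmetric matrix $U_Y M U_Y^T$ is bounded in absolute value by its spectral norm, and because $U_Y$ has orthonormal columns $\|U_Y M U_Y^T\| = \|M\|$. As $M$ is diagonal, $\|M\| = \max_i \bigl| \lambda_i(\Sigma_Y)^2/\|Y\|_F^2 - 1/n \bigr|$; the quantities $\lambda_i(\Sigma_Y)^2/\|Y\|_F^2$ average to $1/n$, so the maximiser is attained either at the largest eigenvalue $\lambda_1(\Sigma_Y)^2 = \|Y\|^2$ or at the smallest $\lambda_n(\Sigma_Y)^2$, which produces the $\vee$ of $\bigl(\|Y\|^2/\|Y\|_F^2 - 1/n\bigr)$ and $\bigl(1/n - \lambda_n(\Sigma_Y)^2/\|Y\|_F^2\bigr)$, i.e. \eqref{eqn:Yinhomoinq}.

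For the Euclidean bound I would note that $\|\xi^{r(Y)} - \xi^{l(Y)}\|$ is the $2$-norm of the vector of diagonal entries of $U_Y M U_Y^T$, hence at most the full Frobenius norm $\|U_Y M U_Y^T\|_F$ (discarding the off-diagonal terms only lowers it), and then invoke $\|U_Y M U_Y^T\|_F = \|M\|_F$, which follows from $\mathrm{Trace}(U_Y M M^T U_Y^T) = \mathrm{Trace}(M M^T)$ via $U_Y^T U_Y = I_n$ and cyclicity of the trace. Since $M$ is diagonal with entries $\lambda_i(\Sigma_Y)^2/\sum_j \lambda_j(\Sigma_Y)^2 - 1/n$, this is exactly the square-root sum in \eqref{eqn:Yinhomoinq2}.

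I do not anticipate a genuine obstacle, since every step specialises the already-proved inhomogeneous lemma to the case $\Psi = I_n$, for which $X = Y$ and the rank is $n$ in place of $\rho$. The only point requiring care is bookkeeping of the normalising constants: recognising that the leverage scores of $Y$ sum to its rank $n$ (so the uniform reference is $1/n$, written as $1/\rho$ in the statement), and that $\lambda_1(\Sigma_Y)^2 = \|Y\|^2$ under the non-increasing ordering convention fixed in the notation section, so that the largest and smallest deviations land on the first and last singular values respectively.
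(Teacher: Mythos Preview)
Your proposal is correct and matches the paper's own justification, which simply states that the corollary follows from Lemma~\ref{lem:inhom} by taking $\Psi = I$ (so that $X=Y$ and the rank $\rho$ becomes $n$). You have merely unpacked that one-line specialisation in detail, reproducing the same SVD identity $\xi^{r(Y)}-\xi^{l(Y)}=\mathrm{diag}(U_Y M U_Y^T)$ and the same spectral/Frobenius norm bounds used in the proof of the lemma.
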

This corollary is a consequence of Lemma \ref{lem:inhom} with $\Psi=I$. The result shows that the difference of choosing between the two sampling probabilities is mainly determined by the dispersion in the singular values of $Y$.


\subsection{Error Analysis}\label{sec:ErrAna}

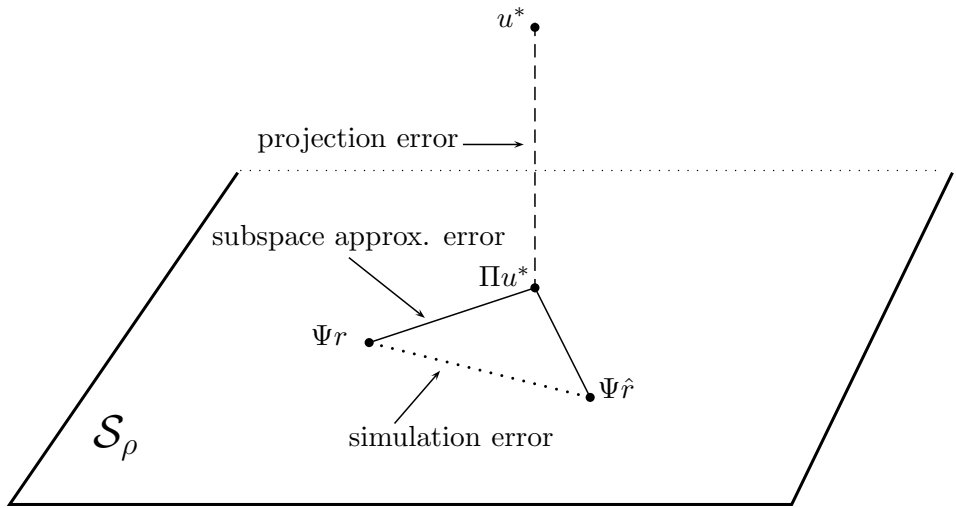
\begin{figure}
\psscalebox{1.0 1.0}
{
\begin{pspicture}(0,-3.2936363)(12.522699,3.2936363)
\psline[linecolor=black, linewidth=0.04](3.0378351,1.1263636)(0.037835084,-3.2736363)(10.22355,-3.2736363)(10.323549,-3.2736363)(12.437835,1.1263636)
\rput[b](1.437835,-2.6869698){\LARGE $\mathcal{S}_\rho$}
\rput[bl](6.446835,3.0536363){$u^*$}
\rput[bl](6.210562,-0.3809091){$\Pi u^*$}
\rput[bl](4.0014715,-1.1645454){$\Psi r$}
\psline[linecolor=black, linewidth=0.02](4.7651076,-1.1281818)(6.946926,-0.4009091)
\psline[linecolor=black, linewidth=0.04, linestyle=dotted, dotsep=0.10583334cm](4.7651076,-1.1281818)(7.6741986,-1.8554546)
\rput[bl](7.7741986,-1.8554546){$\Psi \hat r$}
\psdots[linecolor=black, dotsize=0.12](6.946926,3.0536363)
\psdots[linecolor=black, dotsize=0.12](6.946926,-0.4009091)
\psdots[linecolor=black, dotsize=0.12](4.7651076,-1.1281818)
\psdots[linecolor=black, dotsize=0.12](7.6741986,-1.8554546)
\psline[linecolor=black, linewidth=0.02, linestyle=dotted](3.0681381,1.1548484)(12.322683,1.1548484)
\psline[linecolor=black, linewidth=0.02, linestyle=dashed, dash=0.17638889cm 0.10583334cm](6.946926,3.1142423)(6.946926,-0.46)
\psline[arrows=->,linewidth=0.02,linestyle=solid]{->}(6,1.5) (6.8,1.5)
\rput[bl](3.3,1.4){projection error}
\psline[arrows=->,linewidth=0.02,linestyle=solid]{->}(4.5,0) (5.5,-0.8)
\rput[bl](2.7,0.1){subspace approx. error}
\rput[bl](4.5,-2.5){simulation error}
\psline[arrows=->,linewidth=0.02,linestyle=solid]{->}(5.1,-2.2)(5.7,-1.5)
\psline[linewidth=0.02, linestyle=solid, dash=0.17638889cm 0.10583334cm](6.946926,-0.4009091)(7.6741986,-1.8554546)
\end{pspicture}
}
\label{schem}
\caption{A geometric interpretation of the error components imparted in the sketched solution $\Psi \hat r$. Starting from the high-dimensional, `exact' FEM solution $u^* = A^{-1}b$ we project orthogonally onto the subspace $\mathcal{S}_\rho$ arriving at $\Pi u^*$ while incurring some projection error. The projected problem then leads to a low-dimensional solution $\Psi r = \Psi G^{-1} \Psi^Tb$ that in turn incurs a subspace approximation error due to the condition of the projected matrix $G$, and ultimately $\Psi r$ is approximated via its sketched version $\Psi \hat r = \Psi \hat{G}^{-1} \Psi^T b$ that includes simulation error due to the variance in the estimated $\hat G$.}
\end{figure}

Our approach for simulating a projected solution to the FEM system contends with various sources of error. As depicted at the schematic in figure \ref{schem}, there is an approximation error component associated with restricting to the subspace $\mathcal{S}_\rho$, and this error can be further decomposed in two parts, the projection error, given by $\|u-\Pi u\|$, measuring the distance between the exact solution $u$ and its projection onto the subspace $\Pi u$; and the subspace approximation error, given by $\|\Pi u- \Psi r\|$, measuring the distance between the projection of the true solution $u$ to the best approximation of the regression point $r$ from \eqref{eqn:requiv} within $\mathcal{S}_\rho$. In addition, there is also a simulation or sketching error $\|\Psi r - \Psi \hat r\|$ associated with solving the projected problem based on the sketched $\hat G$ instead of the deterministic $G$. It is easy to show that the distance between $\Pi u$ and $\Psi r$ can be bounded in terms of approximation error. 

\begin{prop}\label{prop:SubReg_second} Considering the regression problem \eqref{eqn:requiv} and recalling that $G=\Psi^TA\Psi$, then we have 
\begin{align*} 
\|\Pi u - \Psi r \|\leq  \frac{\lambda_{\max}(A)}{\lambda_{\min}(G)}\|u-\Pi u\|
\end{align*}
where $\lambda_{\max}(A) =\lambda_{\max}(\Sigma_Y)^2$ is bounded in Lemma \ref{largesteig}, and $\lambda_{\min}(G) = \lambda_\rho(\Sigma_X)^2$.
\end{prop}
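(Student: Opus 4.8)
The plan is to start from the closed-form expression for the projected regression point already derived in \eqref{eqn:r2formula}, namely $r = \Psi^T u + (\Psi^T A \Psi)^{-1}\Psi^T A (I-\Pi)u$, and to isolate the difference $\Pi u - \Psi r$. Multiplying this identity on the left by $\Psi$ and recalling that $\Pi = \Psi\Psi^T$ and $G = \Psi^T A \Psi$ gives
\[
\Psi r = \Pi u + \Psi G^{-1}\Psi^T A (I-\Pi)u,
\]
so that the subspace approximation error is exactly $\Pi u - \Psi r = -\Psi G^{-1}\Psi^T A (I-\Pi)u$. The whole proposition then reduces to bounding the norm of this single expression.

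The second step is to take Euclidean norms and exploit that $\Psi$ has orthonormal columns, hence acts as an isometry from $\mathbb{R}^\rho$ into $\mathbb{R}^n$; this lets me discard the leading $\Psi$ without loss, giving $\|\Pi u - \Psi r\| = \|G^{-1}\Psi^T A (I-\Pi)u\|$. I would then apply submultiplicativity of the spectral norm to peel off the factors in turn: $\|G^{-1}\| = 1/\lambda_{\min}(G)$ because $G$ is symmetric positive definite, $\|\Psi^T\| = 1$ again by orthonormality of the columns, and $\|A\| = \lambda_{\max}(A)$. Collecting these yields
\[
\|\Pi u - \Psi r\| \leq \frac{1}{\lambda_{\min}(G)}\,\|A\|\,\|(I-\Pi)u\| = \frac{\lambda_{\max}(A)}{\lambda_{\min}(G)}\,\|u - \Pi u\|,
\]
which is the claimed inequality; the spectral identifications $\lambda_{\max}(A)=\lambda_{\max}(\Sigma_Y)^2$ and $\lambda_{\min}(G)=\lambda_\rho(\Sigma_X)^2$ follow from Lemma \ref{largesteig} and the SVD of $X$ respectively.

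This argument is essentially bookkeeping, so I do not anticipate a genuine obstacle; the only point requiring care is the norm accounting in the factor $\Psi^T A (I-\Pi)u$. One must resist the temptation to simplify $\Psi^T A (I-\Pi)$ algebraically, since $(I-\Pi)u$ lies in the orthogonal complement of the subspace and there is no cancellation to exploit, and instead bound it bluntly by $\|A\|\,\|(I-\Pi)u\|$. The resulting estimate is tight in the sense that it correctly surfaces the condition-number-type quantity $\lambda_{\max}(A)/\lambda_{\min}(G)$ to which the error analysis in the remainder of the section attributes the dominant contribution.
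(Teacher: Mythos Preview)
Your proof is correct and follows essentially the same approach as the paper: both start from the closed form \eqref{eqn:r2formula}, isolate $\Psi r - \Pi u = \Psi(\Psi^T A\Psi)^{-1}\Psi^T A(I-\Pi)u$, and bound the norm using $\|\Psi\|=\|\Psi^T\|=1$, $\|(\Psi^T A\Psi)^{-1}\|=1/\lambda_{\min}(G)$, and $\|A\|=\lambda_{\max}(A)$. The paper compresses this into a single line, while you spell out the intermediate norm estimates, but the argument is identical.
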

\begin{proof}
From the expression for $r$ in formula \eqref{eqn:r2formula} we immediately obtain
\begin{align*}
    \|\Psi r - \Pi u\|=\|\Psi (\Psi^TA\Psi)^{-1}\Psi^T A(I-\Pi)u\|\leq \frac{\lambda_{\max}(A)}{\lambda_{\min}(G)}\|u-\Pi u\|.
\end{align*}
\end{proof}

The simulation error associated with replacing $\Psi r$ by the sketching-based approximation $\Psi\hat r$ from \eqref{eqn:regrand3norm}, is given by $\|\Psi r - \Psi \hat r\|$, and can be bounded in terms of $\|\Psi r\|$.

\begin{prop}\label{prop:SimErr3-second} Assume problem settings as discussed in sections \ref{sec:DimRed} and \ref{sec:RamSam}, and consider the sketched system in \eqref{eqn:regrand3norm}. Then any $\epsilon,\delta\in (0,1)$ and $c$ chosen as
\begin{align}\label{eqn:cbounds-second}
(1+\frac 1 \epsilon)^2\frac{\bigl((\sum_{\ell=1}^{kd} z_\ell \|(D\Psi)_{(\ell)*}\|)^2- \|G\|^2_F  \bigr)}{\lambda_{\min}(G)^2 \, \delta}\leq c
\end{align}
satisfy 
\be \label{eqn:errorterms-second}
\|\Psi r - \Psi \hat{r}\| \leq \epsilon\|\Psi r\|
\ee
with probability $1-\delta$.
\end{prop}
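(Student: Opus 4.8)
The plan is to recast the claim as a perturbation estimate for the two $\rho\times\rho$ systems $Gr=\Psi^Tb$ and $\hat G\hat r=\Psi^Tb$ arising from \eqref{eqn:requiv3normal} and \eqref{eqn:regrand3norm}, and then to control the random quantity $\|\hat G-G\|_F$ via the second-moment estimate of Proposition \ref{prop:minsigma}. Because $\Psi$ has orthonormal columns we have $\|\Psi r-\Psi\hat r\|=\|r-\hat r\|$ and $\|\Psi r\|=\|r\|$, so it is enough to prove $\|r-\hat r\|\le\epsilon\|r\|$. The starting point is the exact identity
\[
r-\hat r=G^{-1}(\hat G-G)\hat r,
\]
which follows from $\hat G\hat r=\Psi^Tb=Gr$ since $G^{-1}(\hat G-G)\hat r=G^{-1}\hat G\hat r-\hat r=r-\hat r$. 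The advantage of this form over $r-\hat r=\hat G^{-1}(\hat G-G)r$ is that it requires only the deterministic quantity $\|G^{-1}\|=\lambda_{\min}(G)^{-1}$ and avoids bounding the random $\hat G^{-1}$.

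Taking norms, using $\|\hat G-G\|\le\|\hat G-G\|_F$, and writing $\alpha\doteq\|\hat G-G\|_F/\lambda_{\min}(G)$ gives $\|r-\hat r\|\le\alpha\|\hat r\|$. A self-bounding step then substitutes $\|\hat r\|\le\|r\|+\|r-\hat r\|$ to obtain $\|r-\hat r\|\le\alpha(\|r\|+\|r-\hat r\|)$, which for $\alpha<1$ rearranges to
\[
\|r-\hat r\|\le\frac{\alpha}{1-\alpha}\,\|r\|.
\]
Since $t\mapsto t/(1-t)$ increases on $(0,1)$, the desired bound \eqref{eqn:errorterms-second} holds whenever $\alpha\le\epsilon/(1+\epsilon)$, that is whenever
\[
\|\hat G-G\|_F\le\frac{\epsilon}{1+\epsilon}\,\lambda_{\min}(G).
\]
On this event $\|\hat G-G\|<\lambda_{\min}(G)=\sigma_{\min}(G)$, so the perturbation bound $\sigma_{\min}(\hat G)\ge\sigma_{\min}(G)-\|\hat G-G\|>0$ guarantees that $\hat G$ is invertible and $\hat r$ well defined, legitimising both the rearrangement and Algorithm \ref{alg:sketch1}.

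It remains to force the above event to occur with probability at least $1-\delta$. Applying Markov's inequality to $\|\hat G-G\|_F^2$ and inserting the variance estimate of Proposition \ref{prop:minsigma}, whose $c^{-1}$-dependence carries precisely the numerator appearing in \eqref{eqn:cbounds-second}, yields
\[
\Pr\!\Big[\|\hat G-G\|_F>\tfrac{\epsilon}{1+\epsilon}\lambda_{\min}(G)\Big]\le\Big(\frac{1+\epsilon}{\epsilon}\Big)^2\frac{\mathbb{E}_\xi[\|G-\hat G\|_F^2]}{\lambda_{\min}(G)^2}.
\]
Setting the right-hand side equal to $\delta$ and solving for $c$ reproduces exactly \eqref{eqn:cbounds-second}, with the factor $(1+1/\epsilon)^2=((1+\epsilon)/\epsilon)^2$ tracing back to the tolerance $\epsilon/(1+\epsilon)$. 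I expect the main obstacle to be the self-referential step: one must confirm $\alpha<1$ on the selected event so that both the rearrangement and the invertibility of $\hat G$ are valid, and one must match the Markov threshold precisely to $\epsilon/(1+\epsilon)$ so that the sharp constant $(1+1/\epsilon)^2$ emerges rather than a looser factor.
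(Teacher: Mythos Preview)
Your proof is correct and reaches the stated bound, but your execution differs from the paper's in two tactical respects worth noting. First, you use the identity $r-\hat r=G^{-1}(\hat G-G)\hat r$ and then self-bound $\|\hat r\|\le\|r\|+\|r-\hat r\|$, whereas the paper uses the dual identity $\hat G(r-\hat r)=(\hat G-G)r$ and must therefore lower-bound the random quantity $\lambda_{\min}(\hat G)$ via an eigenvalue perturbation argument (essentially Proposition~\ref{prop:min}) to get $\lambda_{\min}(\hat G)\ge(1-\gamma)\lambda_{\min}(G)$; your choice sidesteps that random-eigenvalue step at the price of the rearrangement, and both paths land on the same factor $\gamma/(1-\gamma)=\epsilon$ with $\gamma=\epsilon/(1+\epsilon)$. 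Second, you apply Markov's inequality to $\|\hat G-G\|_F^2$ directly, which reproduces the single power of $\delta$ in \eqref{eqn:cbounds-second} exactly; the paper instead applies Markov to $\|\hat G-G\|_F$ and then invokes $\mathbb{E}[\|\cdot\|_F]\le\sqrt{\mathbb{E}[\|\cdot\|_F^2]}$, which strictly speaking produces $\delta^2$ rather than $\delta$ in the denominator of the sample-size condition. In that sense your Chebyshev-type application is the cleaner match to the statement as written.
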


An application of Proposition \ref{prop:SubReg_second} and Proposition \ref{prop:SimErr3-second} leads to the following result.
\begin{theorem}\label{thm:totalerror3_second}Assume the settings as in Proposition \ref{prop:SimErr3-second} with $\epsilon$, $\delta$ and $c$. Then 
\begin{align}\label{eqn:totalerror3_second} 
\|\Psi \hat r - u\| \leq   & \frac{\lambda_{\max}(A)}{\lambda_{\min}(G)}\|u-\Pi u\|+\epsilon\|\Psi r\|,
\end{align}
with probability $1-\delta$.
\end{theorem}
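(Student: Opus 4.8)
The plan is to read the total error $\|\Psi \hat r - u\|$ as assembled from the three geometric pieces drawn in Figure \ref{schem} and to control them with the two preceding propositions. Because all of the randomness is confined to Proposition \ref{prop:SimErr3-second}, while Proposition \ref{prop:SubReg_second} is a deterministic statement, the final bound will inherit exactly the confidence level $1-\delta$ of the former, with no extra union-bound bookkeeping.

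The first step is to insert the two intermediate points $\Psi r$ (the exact projected solution) and $\Pi u$ (the orthogonal projection of $u$) and invoke the triangle inequality,
\begin{align*}
\|\Psi \hat r - u\| \le \underbrace{\|\Psi \hat r - \Psi r\|}_{\text{simulation error}} + \underbrace{\|\Psi r - \Pi u\|}_{\text{subspace approx. error}} + \underbrace{\|\Pi u - u\|}_{\text{projection error}}.
\end{align*}
Proposition \ref{prop:SimErr3-second} bounds the first summand by $\epsilon\|\Psi r\|$ on an event of probability $1-\delta$, and Proposition \ref{prop:SubReg_second} bounds the second, deterministically, by $\tfrac{\lambda_{\max}(A)}{\lambda_{\min}(G)}\|u-\Pi u\|$. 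These are precisely the two terms appearing on the right-hand side of \eqref{eqn:totalerror3_second}, so the remaining issue is how the projection term $\|\Pi u - u\|$ is accounted for.

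The cleaner route, which I would follow, is to exploit orthogonality rather than the crude three-term split. Since $\Pi u - u$ lies in $\mathcal{S}_\rho^\perp$ while both $\Psi r - \Pi u$ and $\Psi \hat r - \Psi r$ lie in $\mathcal{S}_\rho$, the in-subspace discrepancy satisfies $\|\Psi \hat r - \Pi u\| \le \|\Psi \hat r - \Psi r\| + \|\Psi r - \Pi u\|$, and substituting the two propositions reproduces the right-hand side of \eqref{eqn:totalerror3_second} exactly. The full error to $u$ then differs from this only by the orthogonal component $\|u-\Pi u\|$, which by construction of the basis $\Psi$ (the low-variation eigenvectors of $\Delta$) is kept small; it is worth recording here that $\tfrac{\lambda_{\max}(A)}{\lambda_{\min}(G)}\ge 1$, since the compression $G=\Psi^T A\Psi$ obeys $\lambda_{\min}(G)\le\lambda_{\max}(G)\le\lambda_{\max}(A)$ by the Courant--Fischer min-max principle, so the subspace-approximation coefficient dominates.

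I expect the only genuinely delicate step to be the precise statement of what the left-hand side measures and the matching probabilistic bookkeeping. The factor $\|\Psi r\|$ in \eqref{eqn:errorterms-second} is deterministic — it depends only on $G$ and $b$, not on the random sketch $S$ — so the event on which Proposition \ref{prop:SimErr3-second} succeeds is exactly the event on which the combined bound holds, and no independence or second failure probability enters. The main obstacle is therefore conceptual rather than computational: ensuring that the projection error orthogonal to $\mathcal{S}_\rho$ is correctly separated from the two in-subspace errors, so that the bound is attributed to the distance from $\Pi u$ and the coefficient $\lambda_{\max}(A)/\lambda_{\min}(G)$ carries the subspace-approximation contribution as claimed.
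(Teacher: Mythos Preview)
Your approach is exactly what the paper does: it states only that the theorem follows by ``an application of Proposition \ref{prop:SubReg_second} and Proposition \ref{prop:SimErr3-second}'', i.e.\ the triangle-inequality split you write down combined with the deterministic subspace bound and the probabilistic sketching bound, with the single event of probability $1-\delta$ coming from the latter.

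One point worth flagging, which you in fact notice but do not fully resolve: the naive three-term split leaves an extra additive $\|u-\Pi u\|$, and your orthogonality route still leaves the Pythagorean contribution $\|u-\Pi u\|^2$ under the square root. Your observation that $\lambda_{\max}(A)/\lambda_{\min}(G)\ge 1$ is the right one for absorbing this term, but strictly it yields a coefficient like $\lambda_{\max}(A)/\lambda_{\min}(G)+1$ (or $\sqrt{1+(\lambda_{\max}(A)/\lambda_{\min}(G))^2}$) rather than the bare ratio. The paper does not address this either; it simply records \eqref{eqn:totalerror3_second} as the outcome of combining the two propositions, so your reading of the intended argument is correct.
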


\begin{remark} From Cauchy's interlacing theorem \cite{Woodruff} and $G=\Psi^TA \Psi$ we have
$$
 \lambda_{\min}(A)\leq  \lambda_{\min}(G)\leq  \lambda_{\rho}(A),
 $$
thus the best to be expected from \eqref{eqn:totalerror3_second} is
$$
\|\Psi \hat r -u\|\leq \kappa_{\rho}(A)\|u-\Pi u\|+\epsilon\|\Psi r\|,
$$
where $\kappa_{\rho}(A):=\frac{\|A\|_2}{\lambda_{\rho}(A)}$ with $c$ chosen to as in Proposition \ref{prop:SimErr3-second}.
On the other hand the worst case is
$$\|\Psi \hat r -u\|\leq \kappa(A)\|u-\Pi u\|+\epsilon\|\Psi r\|.$$
\end{remark}

We end the error analysis by making a remark on the condition number of matrix $G$, noticing that from Theorem \ref{thm:totalerror3_second} the total error is bounded by $\|(I-\Pi)u\|$ and $\|\Psi r\|$.
\begin{remark}\label{kappaG}
The quantity $\|\Psi r\|$ can be further developed following the last line of \eqref{eqn:r2formula} as
\begin{align*}
\|\Psi r\| & \leq \|\Psi \Psi^Tu\| + \|\Psi(\Psi^TA\Psi)^{-1} \Psi^T A(I- \Pi)u\|\\
& \leq \|\Pi u\| + \bigl (\kappa(G) + \frac{\lambda_{\max}(A) - \lambda_{\max}(G)}{\lambda_{\min}(G)} \bigr ) \|(I - \Pi)u\|,
\end{align*}
therefore when $\kappa(G)$ is large, $\|\Psi r\|$ and in turn the total error increases.
\end{remark}

\section{Numerical experiments}\label{sec:ne}

To verify the performance of our algorithm and to test the derived error bounds we designed a number of numerical experiments based on the Dirichlet and Neumann problems \eqref{pde}-\eqref{bnd}. In these we consider a domain $\Omega$ to be a sphere of unit radius centred at the origin and discretised into $k=190955$ unstructured linear tetrahedral elements. This model comprises a total $n+n_\partial= 34049$ nodes of which $n_\partial = 4217$ are on the boundary. In the tests discussed below we run a sequence of $N=1000$ FEM problems where $p$ is chosen at random, and present our findings on average for the thousand problems. For the subspace projection, a basis $\Psi$ consisting of singular vectors of $D^TD$ was used throughout. Ahead of the tests we compute and store the mesh-dependent sparse gradients matrix $D$ modified to conform to the imposed boundary conditions, and the tall matrix $D\Psi$. Effectively, given $p$ one readily forms the diagonal $Z$ and thereafter the solution is computed directly as \verb"u=A\b" once the stiffness matrix \verb"A=D'ZD" is assembled. Our code was implemented in Matlab R2018b and executed on a workstation equipped with two 14-core Intel Xeon dual processors, running Linux NixOS with 384GB RAM.  

\begin{figure}
\begin{center}
\includegraphics[width=0.49\textwidth]{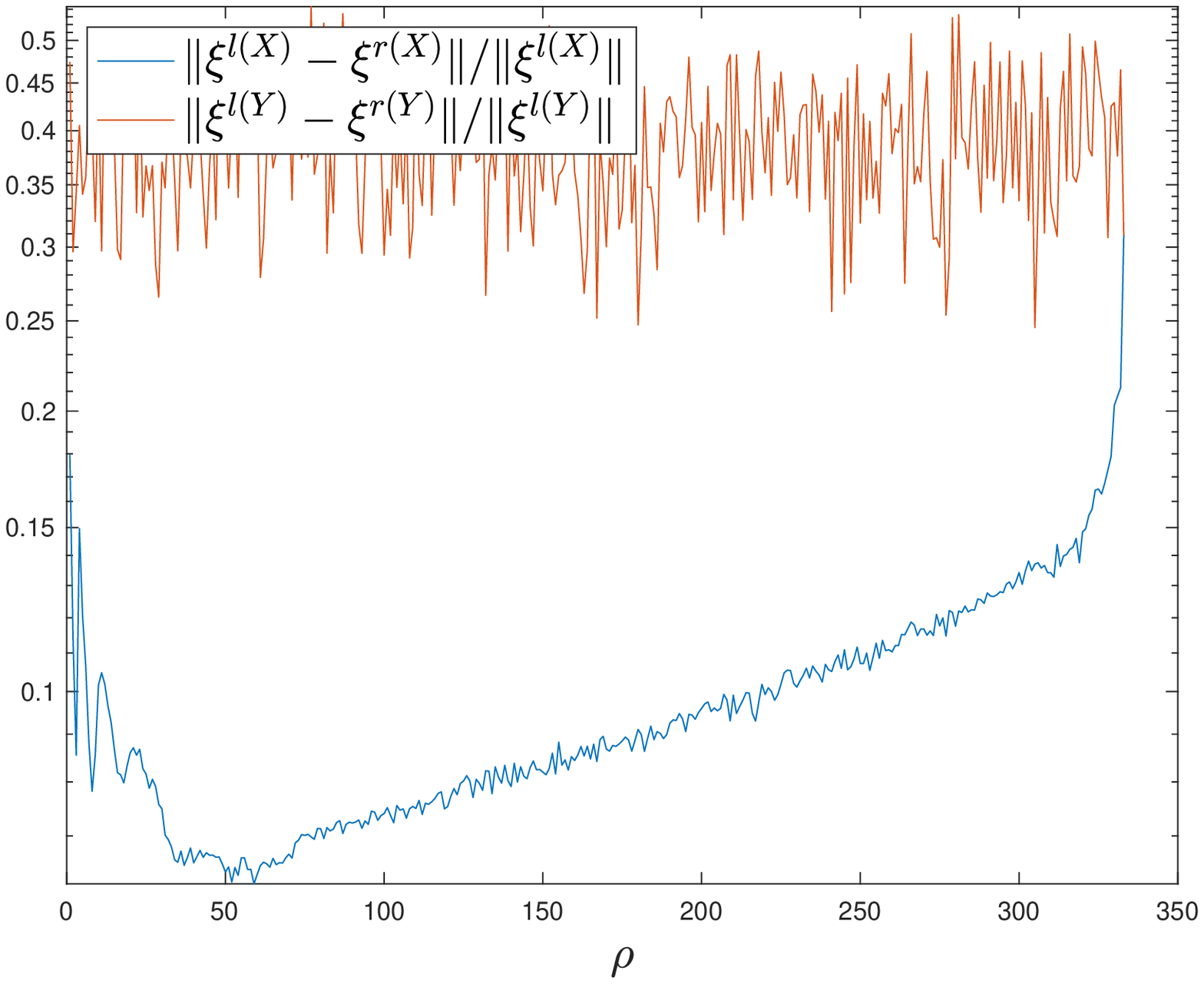} \includegraphics[width=0.49\textwidth]{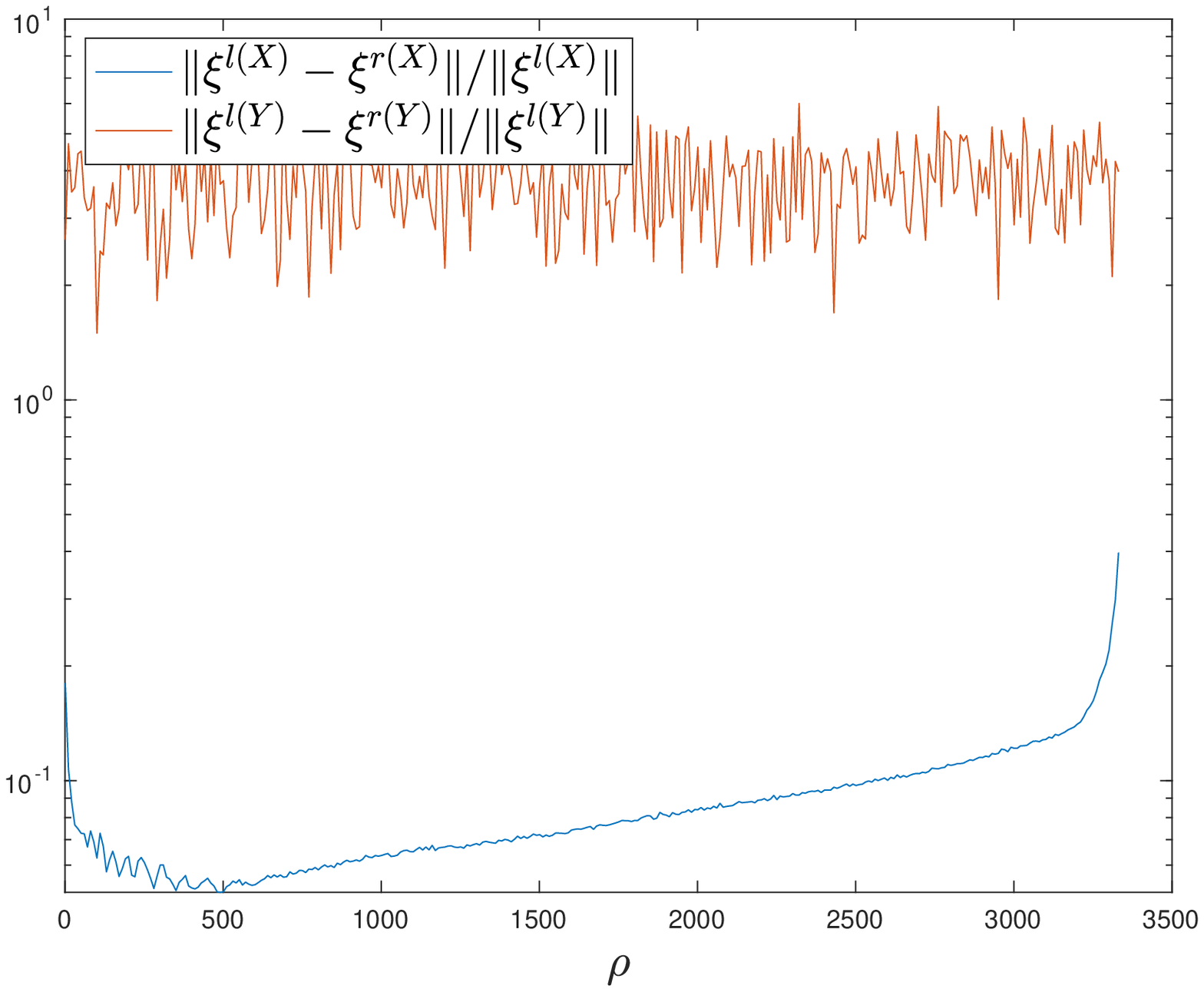}\\
\caption{Numerical investigation of the distance between the utilised and leverage score based sampling distributions for the projected $G=X^TX$ (blue) and the original $A=Y^TY$ (red) matrix products for varying $\rho$ on two coarser meshes of the domain with $n=334$ (left) and $n=3335$ (right) degrees of freedom respectively. The optimal sampling distributions $\xi^{l(X)}$ and $\xi^{l(Y)}$ are taken to be those based on the statistical leverage scores as in \cite{DrineasMahoneyER} of $X$ ans $Y$ respectively, while the $\xi^{r(X)}$ and $\xi^{r(Y)}$ are those implemented in our algorithm and are based on the Euclidean norm of the matrix rows. Note that the plots are averaged over 5 $(X,Y)$ pairs involving randomly drawn vectors $p$ from the uniform distribution $\mathcal{U}[10^{-2},1]$. The graphs show explicitly that sketching the projected product $X^TX$ for $\rho \ll n$ with $\xi^{r(X)}$ is near optimal, as well as illustrating the range of $\rho$ values where the discrepancy between the two distributions exhibits a monotonic behaviour.}
\label{xicomp}
\end{center}
\end{figure}

\subsection{The Dirichlet problem}

\begin{table}
\centering
\begin{tabular}{|c|c|c|c|c|c|c|c|c|c|}
  \hline
test & $\rho$ & $c$ & time & ratio & $\frac{\|\Pi u - u\|}{\|u\|}$  & $\frac{\|\hat G - G\|_F}{\|G\|_F}$ & $\kappa(G)$ & $\frac{\|\hat r - r\|}{\|r\|}$ & $\frac{\|\Psi \hat r - u\|}{\|u\|}$ \\ \hline \hline
A &  100  & 5000 & 658 & 0.0087 & 0.0420 & 0.1312 & 16.3& 0.0796 & 0.0914\\ 
B  & 50  & 5000 & 609 &  0.0087 & 0.0675 & 0.1309 & 10.9 & 0.0783 & 0.0913\\
C &  50  & 10000 & 396 & 0.0087 & 0.0675 & 0.0924 & 10.2 & 0.0624 & 0.0992\\
D &  50  & 10000 & 448 & 0.0087 & 0.0662 & 0.0923 & 10.9 & 0.0613 & 0.0942\\
E &  50  & 50000 & 495 & 0.0806 & 0.0675 & 0.0292 & 10.5 & 0.0193 & 0.0861\\
F &  50  & 100000 & 574 & 0.1496 & 0.0675 & 0.0207 & 10.8 & 0.0137 & 0.0854\\
  \hline
\end{tabular}
\label{tableD}
\caption{The table above summarises the findings of our simulation tests on the Dirichlet problem. $\rho$ is the number of basis functions spanning the projection subspace, $c$ the number of samples used in the sketch, time in seconds is the time taken for 1000 sketched problem evaluations,  and ratio is the percentage of the rows of $X$ utilised in the sketch. The remaining quantities are relative values for the subspace approximation, sketching and overall solution errors and the condition number of $G$ averaged over 1000 FEM solutions. In all tests the parameter vectors were drawn from the uniform distribution $\mathcal{U}[10^{-1},10^2]$ apart from test D where $p$ was sampled from $\exp(-\mathcal{U}[10^{-4},1])$. Characteristic to these tests are the relative low overall error levels, due to the suppressed projection and subspace approximation errors in conjunction to the small condition number of the projected matrix.}
\end{table}

We first address the Dirichlet problem with a uniform boundary condition $u=0$ on $\partial \Omega$ yielding a FEM system with $n=29832$ degrees of freedom, one for each interior node in the mesh. The forcing term is taken to be a piecewise constant approximation of the function
$$
f(x_1,x_2,x_3) = \begin{cases} 
5 & \text{if} \; \sqrt{(x_1+\frac 1 2)^2 + x_2^2 + x_3^2} \leq 0.3,\\
0 & \text{otherwise},
\end{cases},
$$
in the interior of the domain. Two matrices $\Psi$ were constructed using the last $\rho=50$ and $\rho=100$ singular vectors of $\Delta$ resfor the needs of the tests referred to as A, B, C, D, E and F in table \ref{tableD}. In each of these tests we solve for the exact and the sketched solutions for 1000 parameter vectors and record their corresponding timings. The time for the sketched solution includes forming the sampling distribution, taking $c$ iid samples, sketching the matrix $G$ and solving the projected problem for $\hat r$. The particular settings for these tests and the average values of the errors obtained are tabulated in table \ref{tableD}. From this it appears that $\rho=50$ yields a sufficiently small projection error of about $6\%$ despite that $p$ varies over four orders of magnitude, while the overall relative error is bounded below $10\%$ in all tests. As anticipated, with the sampling budget increasing from $c=5000$ to $c=100000$ the sketching error $\|G - \hat G\|_F/\|G\|_F$ drops from 13\% to about 2\%, even though only 8\% of the rows of $X$ are sampled in the process, which indicates that the sampling is highly inhomogeneous. Finally, the times for 1000 sketched solutions were found to be in the range 500 - 600 s, yielding an average of about 0.55 s per FEM problem, which is substantially lower to the recorded average of 3.1 s for an exact high-dimensional solution. Critical to this desirable performance is the small condition number $\kappa(G) \approx 10$ which implies that $\lambda_{\min}(G)$ is bounded away from zero, according to the bound in the Theorem \ref{thm:totalerror3_second}. More insight into the dependence of the error components on the parameter vector can be obtained by the histograms in figure \ref{Dir_hist} illustrating the variation of the projection, sketching, subspace approximation and total errors, across the range of the simulated problems in test C, where $p$ was sampled from the uniform distribution $\mathcal{U}[10^{-1},10^2]$. At the same figure we plot also the histogram of the condition number of $G$ for $\rho=50$ and next to it that for $\rho=100$ for comparison, both of which indicate that $G$ is a well-conditioned matrix for all choices of the parameter vector.     
\begin{figure}
\begin{center}
\includegraphics[width=0.45\textwidth]{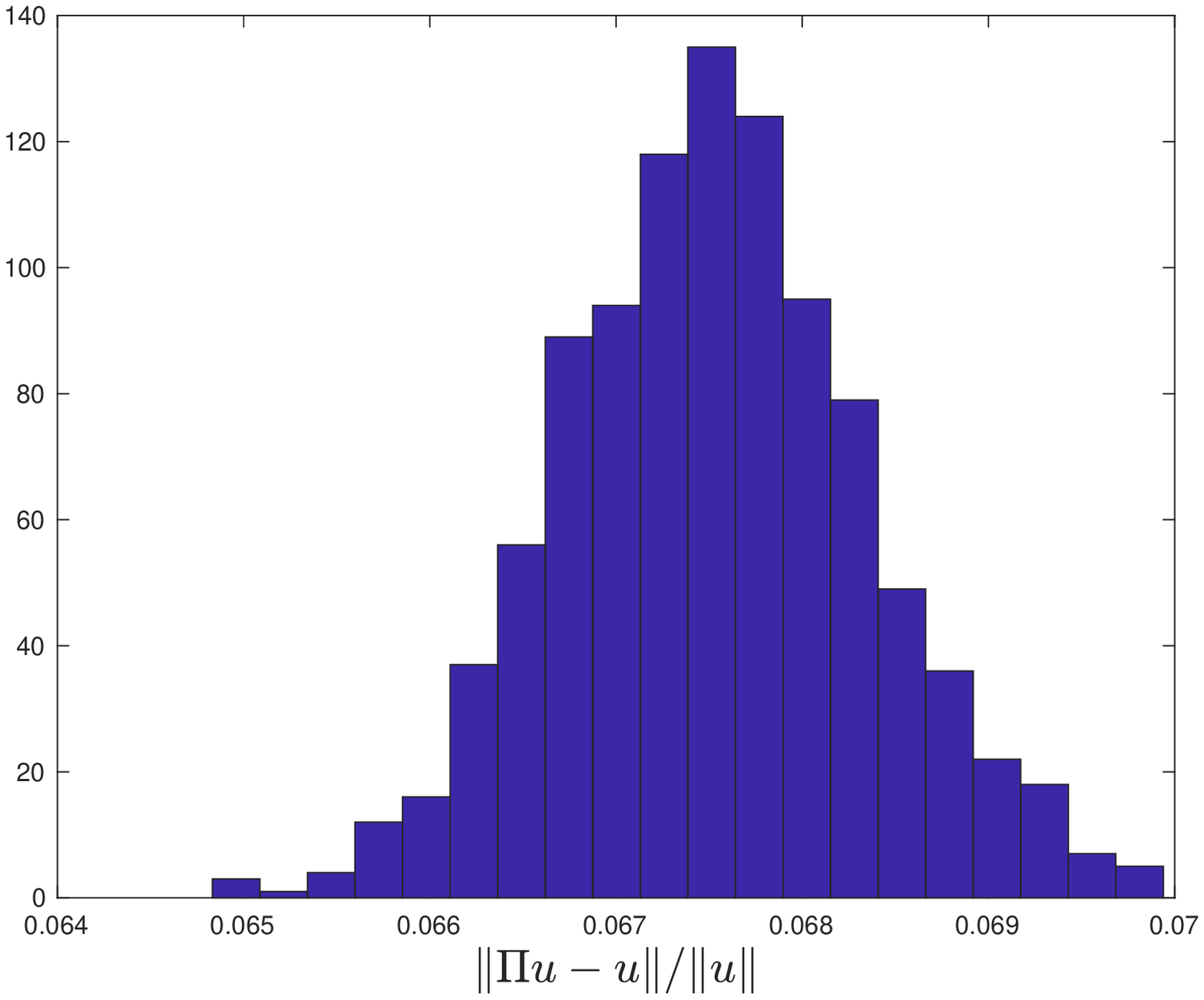} \includegraphics[width=0.45\textwidth]{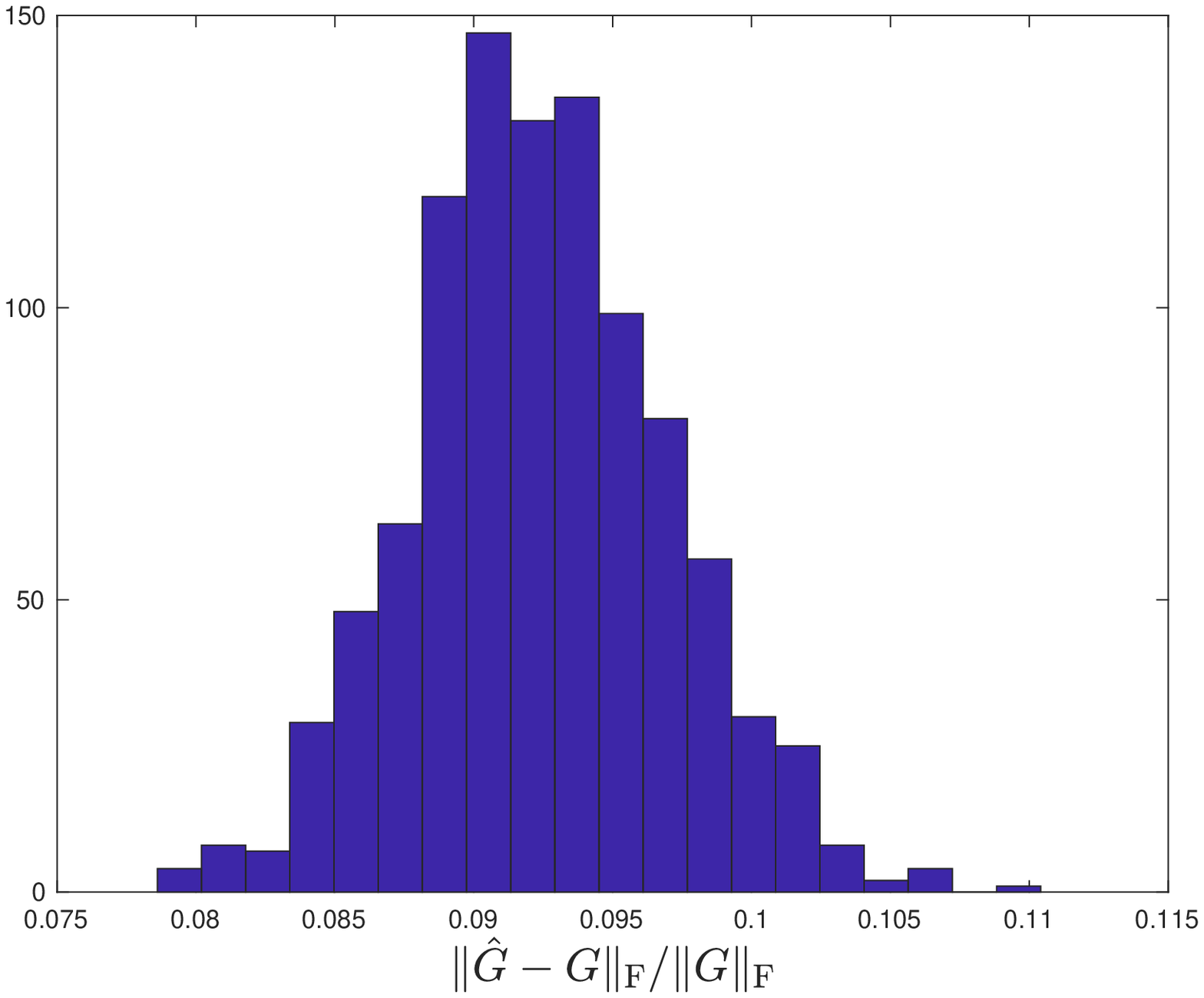}\\
\includegraphics[width=0.45\textwidth]{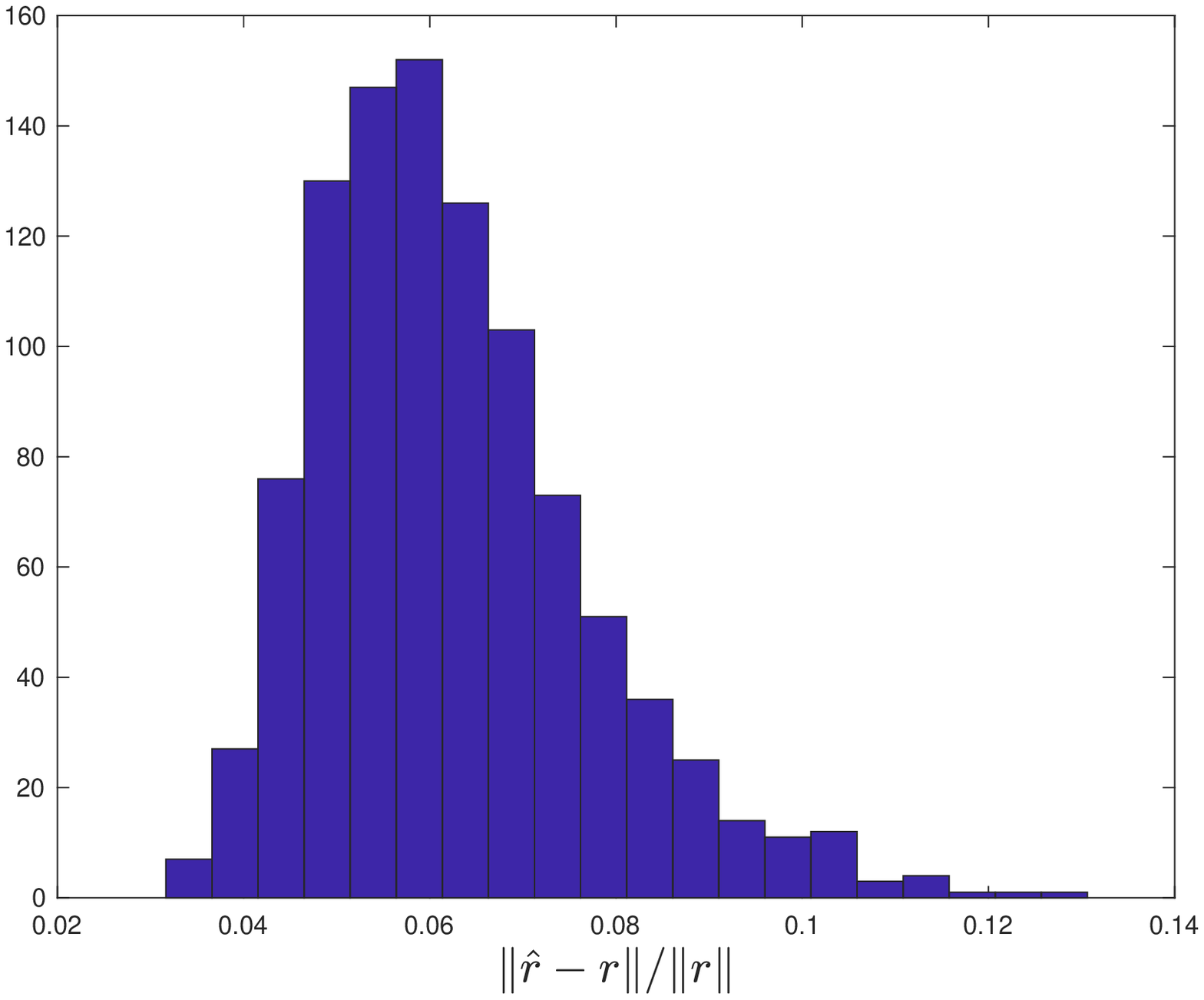} \includegraphics[width=0.45\textwidth]{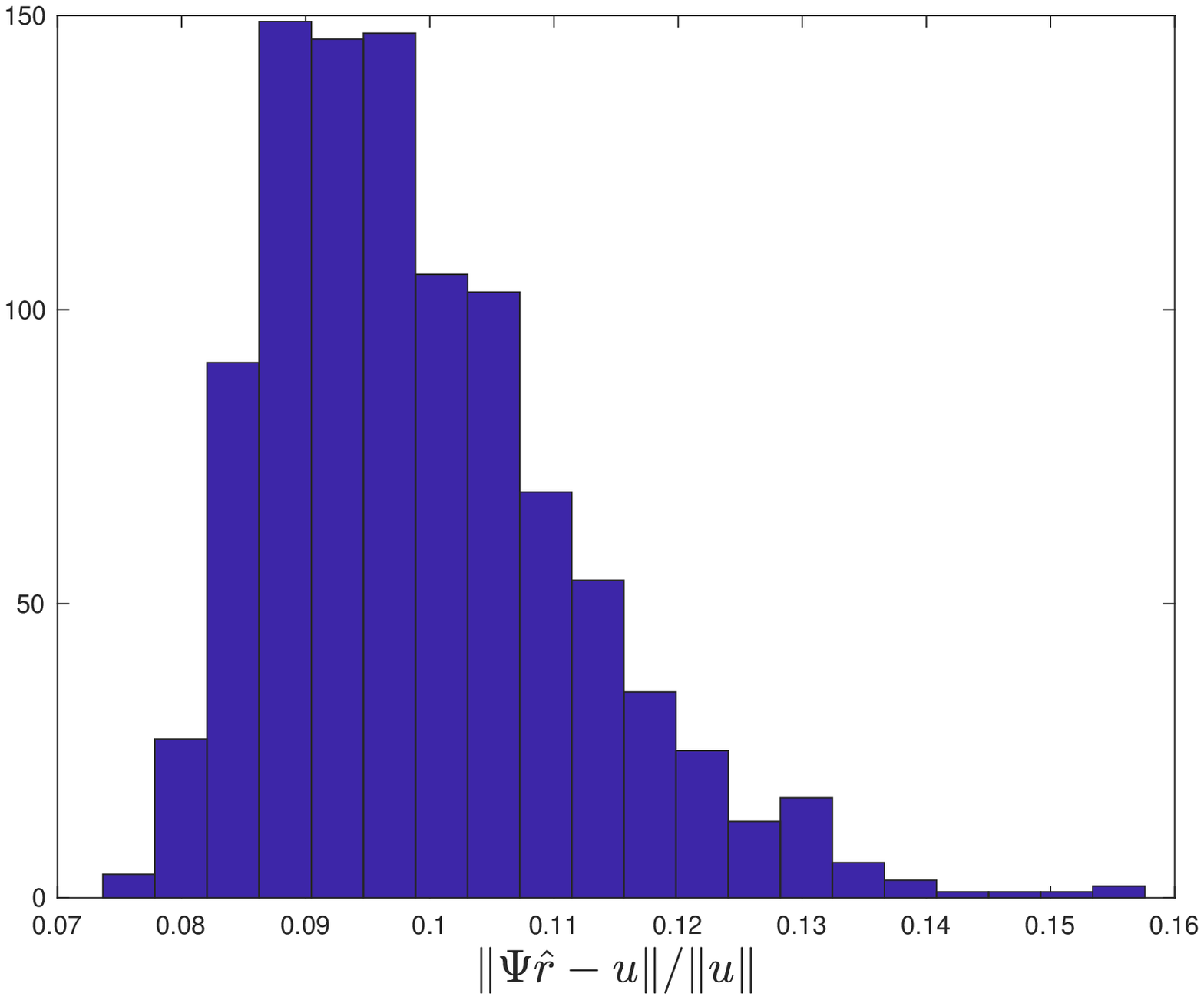}\\
\includegraphics[width=0.45\textwidth]{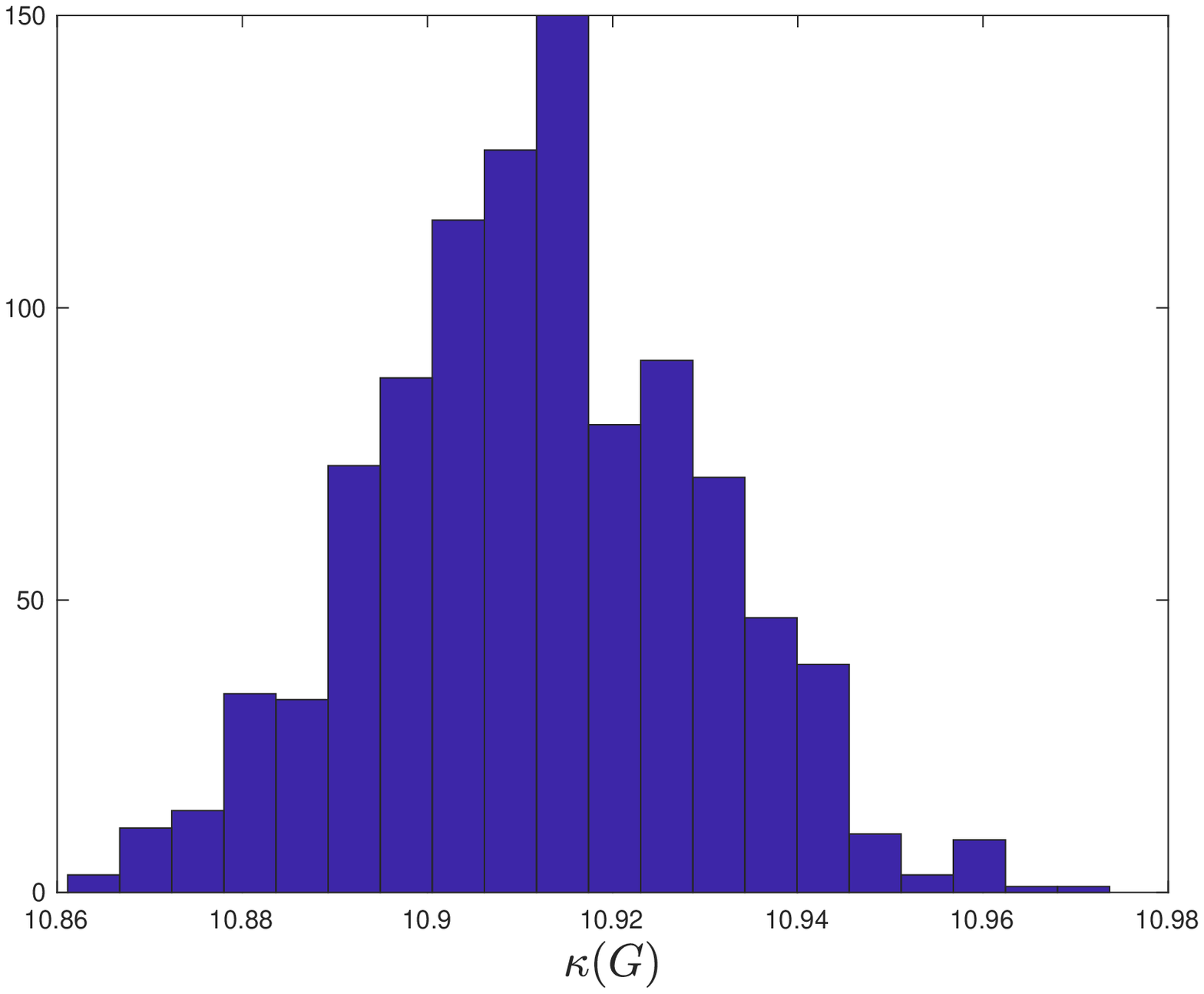}
\includegraphics[width=0.45\textwidth]{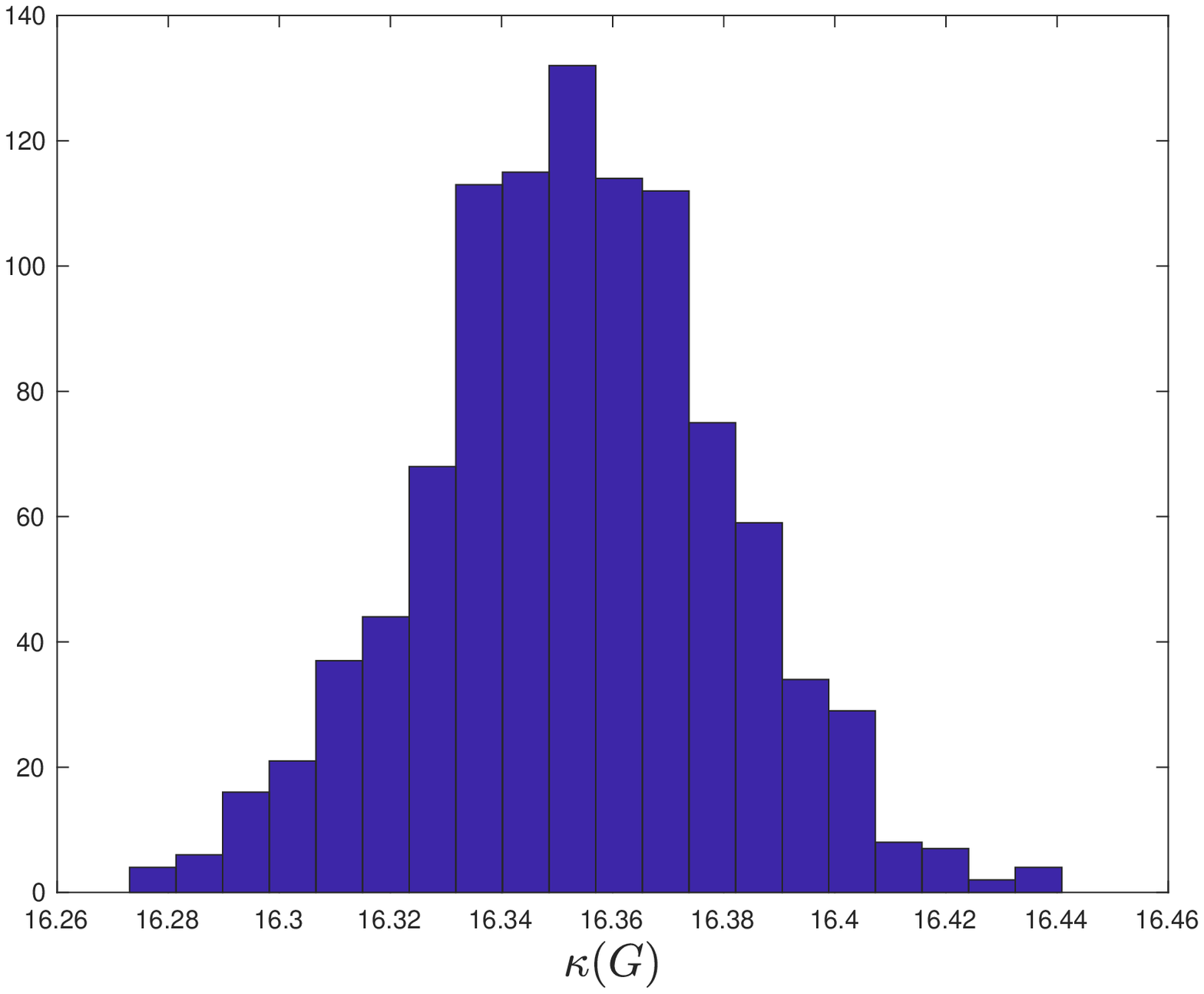}
\caption{Histograms depicting the relative variation of the projection, subspace approximation, simulation and total solution errors for the 1000 simulations in test C where $c=10000$, $\rho=50$ and $p$ was drawn from the uniform distribution $\mathcal{U}[10^{-1},10^2]$. The figures at the top row show that the projection and sketching errors are symmetrically concentrated around some small values without any outliers, while those in the second row for the subspace approximation and overall errors appear to be somewhat skewed towards zero. This desirable behaviour can be explained via the condition number of the projected matrix $G$ that controls the overall error amplification, as shown at the bottom left figure. For comparison, we plot to its right the respective histogram for $\rho=100$ indicating that $G$ remains a well-conditioned matrix for these choices of $\rho$.}
\label{Dir_hist}
\end{center}
\end{figure}                

\begin{figure}
\begin{center}
\includegraphics[width=0.47\textwidth]{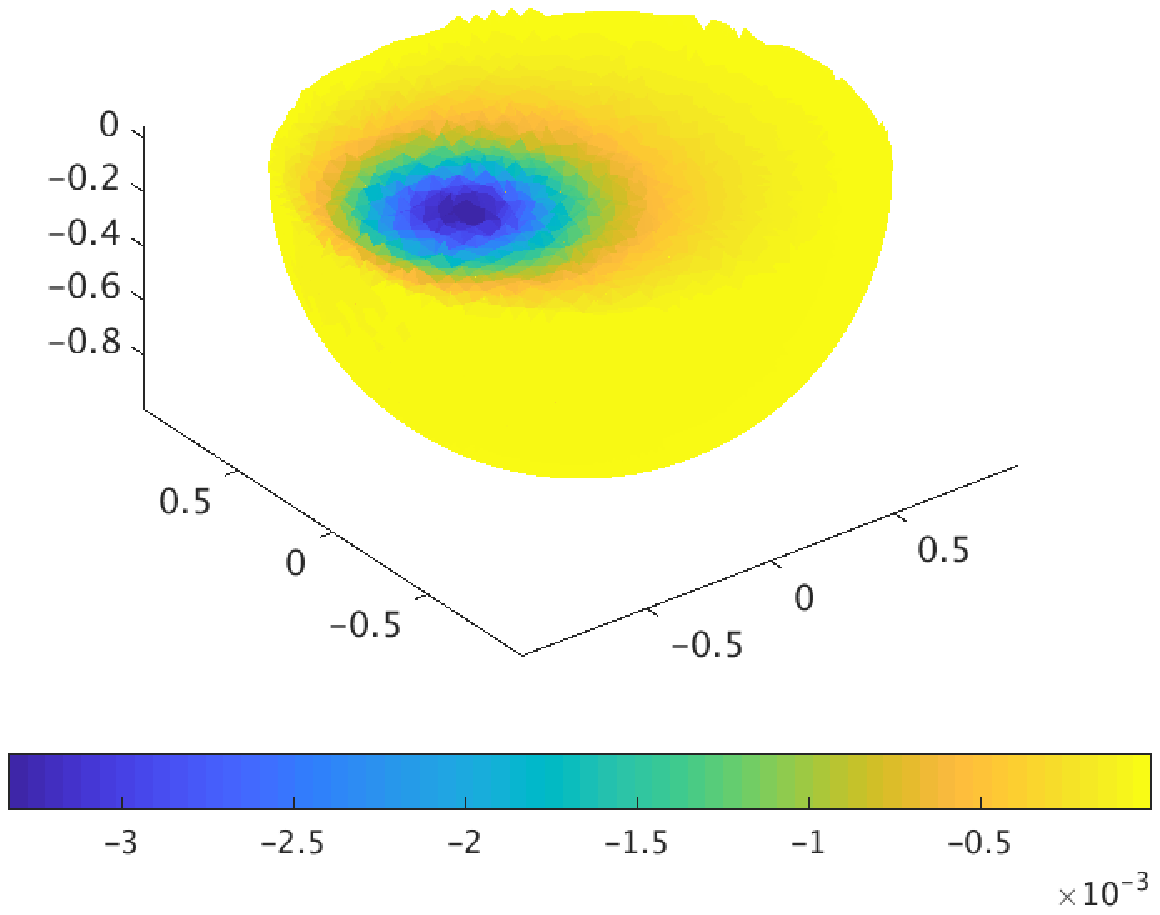} \includegraphics[width=0.47\textwidth]{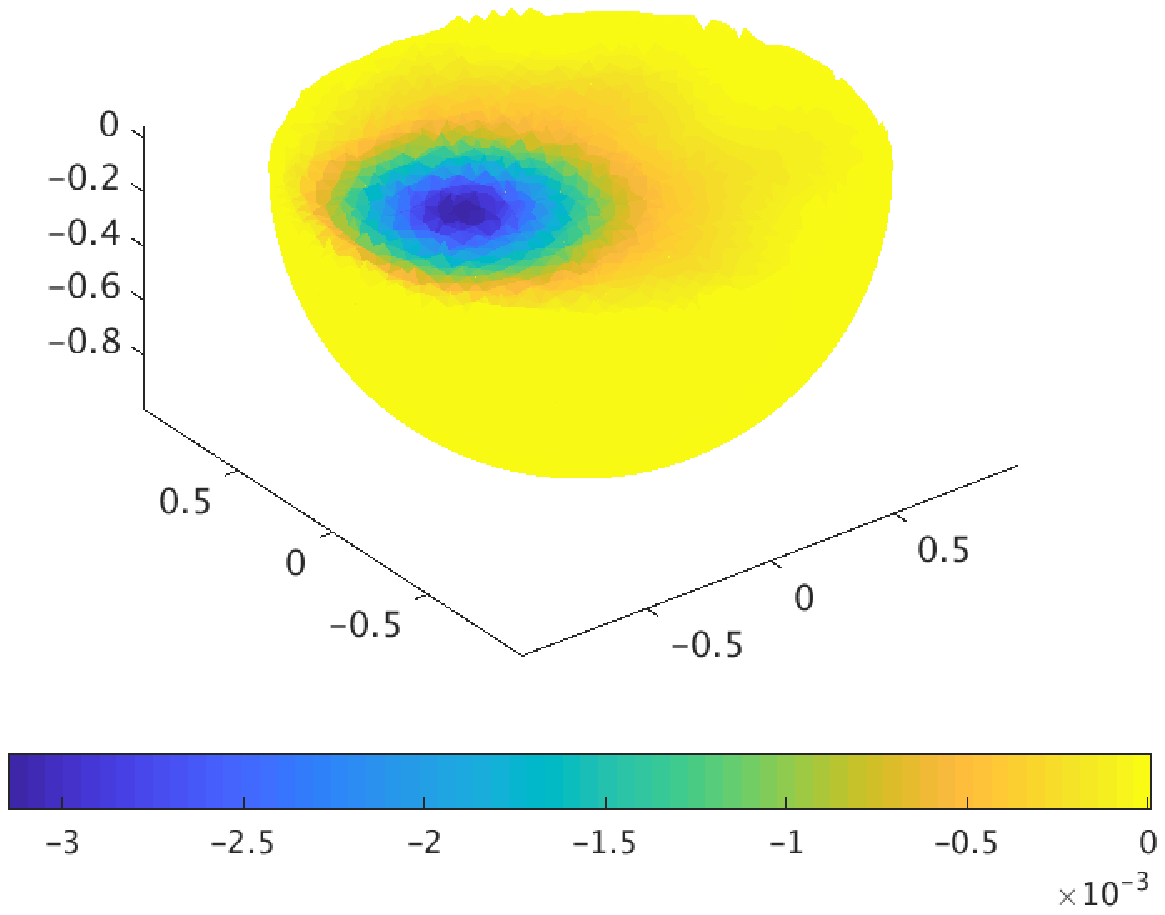}\\
\includegraphics[width=0.47\textwidth]{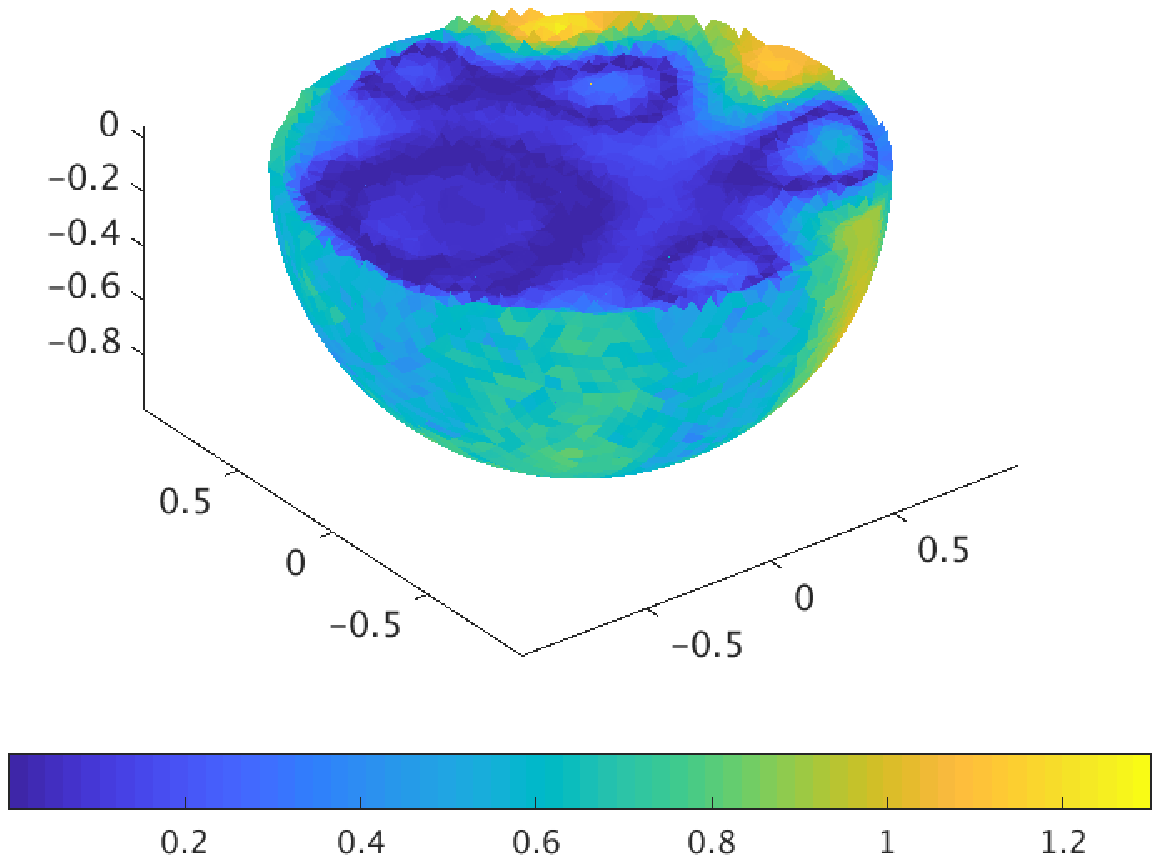}
\caption{At the top row, an extract of the exact $u^*$ (left) and sketched $\Psi \hat r$ (right) solutions of the Dirichlet problem at the bottom half of the domain for one instance of $p$ during test E. Below, the percentage relative error mapped to this section of the domain is shown below. The recorded times for the sketched and exact solutions were 0.4 s and 3.1 s respectively.}
\end{center}
\end{figure}

\subsection{The Neumann problem}

\begin{table}
\centering
\begin{tabular}{|c|c|c|c|c|c|c|c|c|c|}
  \hline
test & $\rho$ & $c$ & time & ratio & $\frac{\|\Pi u - u\|}{\|u\|}$ & $\frac{\|\hat G - G\|_F}{\|G\|_F}$ & $ \kappa(G)$ &  $\frac{\|\hat r - r\|}{\|r\|}$ & $\frac{\|\Psi \hat r - u\|}{\|u\|}$ \\ \hline \hline
A &  100  & 5000 & 600 & 0.0087 & 0.0040 & 0.2079 & 1728 & 0.4946 & 0.4418\\ 
B  & 100  & 50000 & 776 &   0.0814 & 0.0039 & 0.0649 & 1743 &  0.1107 & 0.1365\\
C &  50  & 100000 & 605 &  0.1539 & 0.0053 & 0.0293 & 1153 & 0.0873 & 0.1294\\
D &  50  & 100000 & 562 & 0.1574 & 0.0053 &  0.0293  & 1062 &  0.0792 & 0.1204\\
E & 50 & 500000 & 1897 & 0.5496 & 0.0053 & 0.0131 & 1130 & 0.0375 & 0.1126\\
F & 50 & 500000 & 1133 & 0.5085 & 0.0053 & 0.0131 & 1055 & 0.0383 & 0.1223\\\hline
\end{tabular}
\label{tableN}
\caption{The table above summarises the findings of our simulation tests on the Neumann problem. $\rho$ is the number of basis functions spanning the projection subspace, $c$ is the number of samples used in the sketching, time is the duration in seconds taken for a 1000 sketched problem evaluations and ratio is the percentage of the rows of $X$ utilised in the sketch. The remaining quantities are relative errors for the projection, simulation, subspace approximation, overall solution error, and the condition of the projected matrix $G$ averaged over 1000 problem solutions. The parameter vectors were drawn from the uniform distribution $\mathcal{U}[10^{-1},10^2]$, apart from tests D and E where $\exp(-\mathcal{U}[10^{-4},1])$ was invoked.}
\end{table}

For the Neumann problem we consider a forcing term $f=0$ in the interior of the domain and the condition 
$$
g^{(N)}(x_1,x_2,x_3) = \begin{cases} 
1 & \text{if} \; \sqrt{x_1^2 + (x_2-1)^2 + x_3^2} \leq 0.4\\
0 & \text{otherwise}
\end{cases}, 
$$
at the boundary. Similarly to the Dirichlet case we set to investigate the performance of our algorithm in approximating $u^*$ on a series of tests whose results are tabulated in table \ref{tableN}. To aid the comparison with the Dirichlet results the same mesh is used, however at the Neumann problem $u^*$ has $n+n_\partial-1 = 34048$ degrees of freedom, incorporating all nodes of the mesh apart from one whose value is fixed in order to enforce uniqueness. Overall, the error values recorded show that despite the very small projection error, the total errors observed are substantially larger to those at the Dirichlet tests. Increasing the sampling budget to 500000, sampling  54\% of the rows of $X$, suppresses the total error to around 12\% with a linear reduction in the sketching error. However the relative regression solution error, and thus the total solution error remain large and noticeably, the average condition number of $G$ is about two orders of magnitude larger to that in the Dirichlet tests. The large values of $\kappa(G)$, manifesting that $\lambda_{\rho}(G)$ shrinks substantially, are also confirmed in the associated histogram plots for test F in figure \ref{Neu_hist}. In terms of the computational times, the sketched approach maintains its advantage against the deterministic solution since for $c=100000$ we approximate a solution with about 12\% error in about 0.6 s while the corresponding $u^*$ takes 4.2 s.


                      
\begin{figure}
\begin{center}
\includegraphics[width=0.45\textwidth]{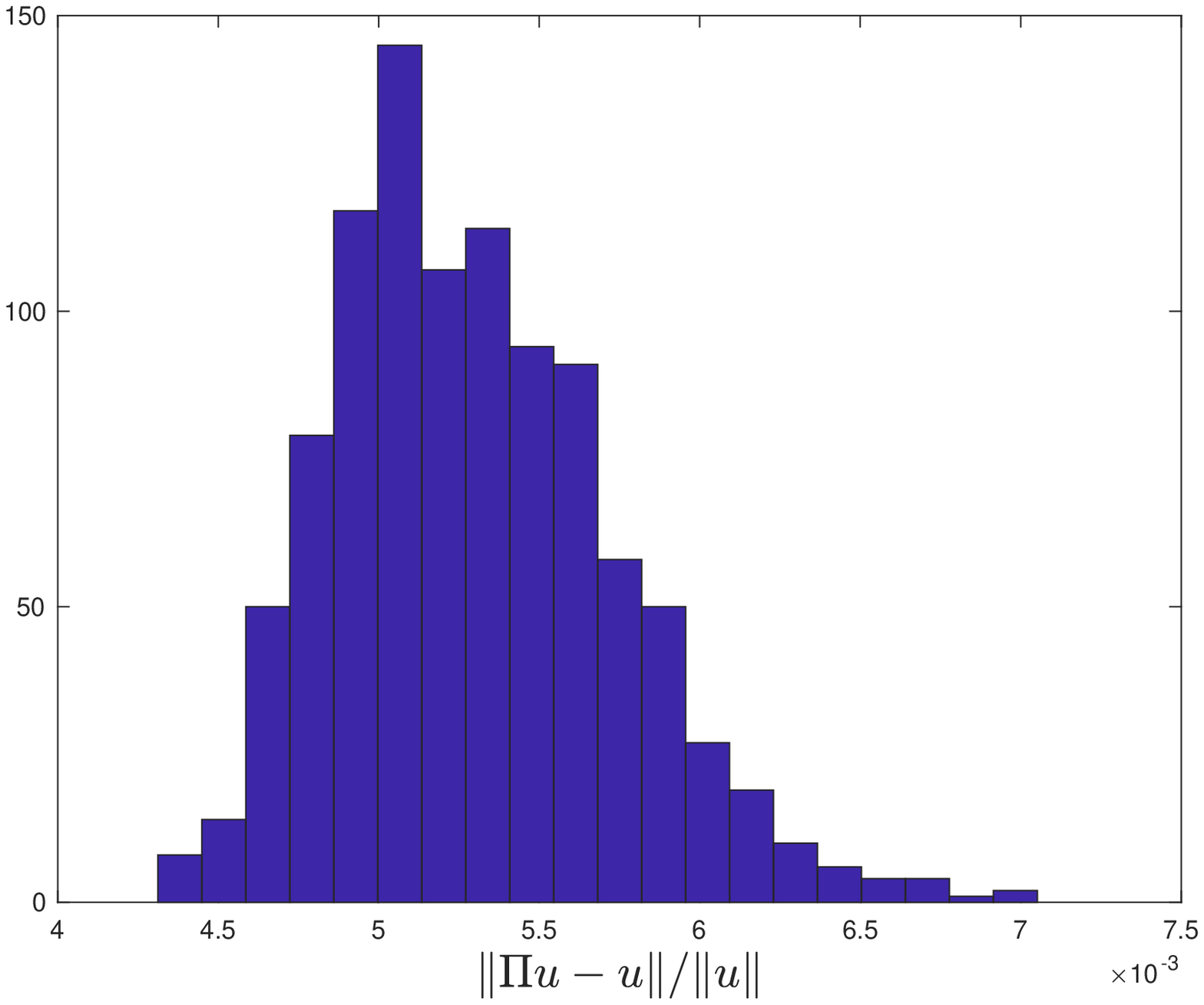} \includegraphics[width=0.45\textwidth]{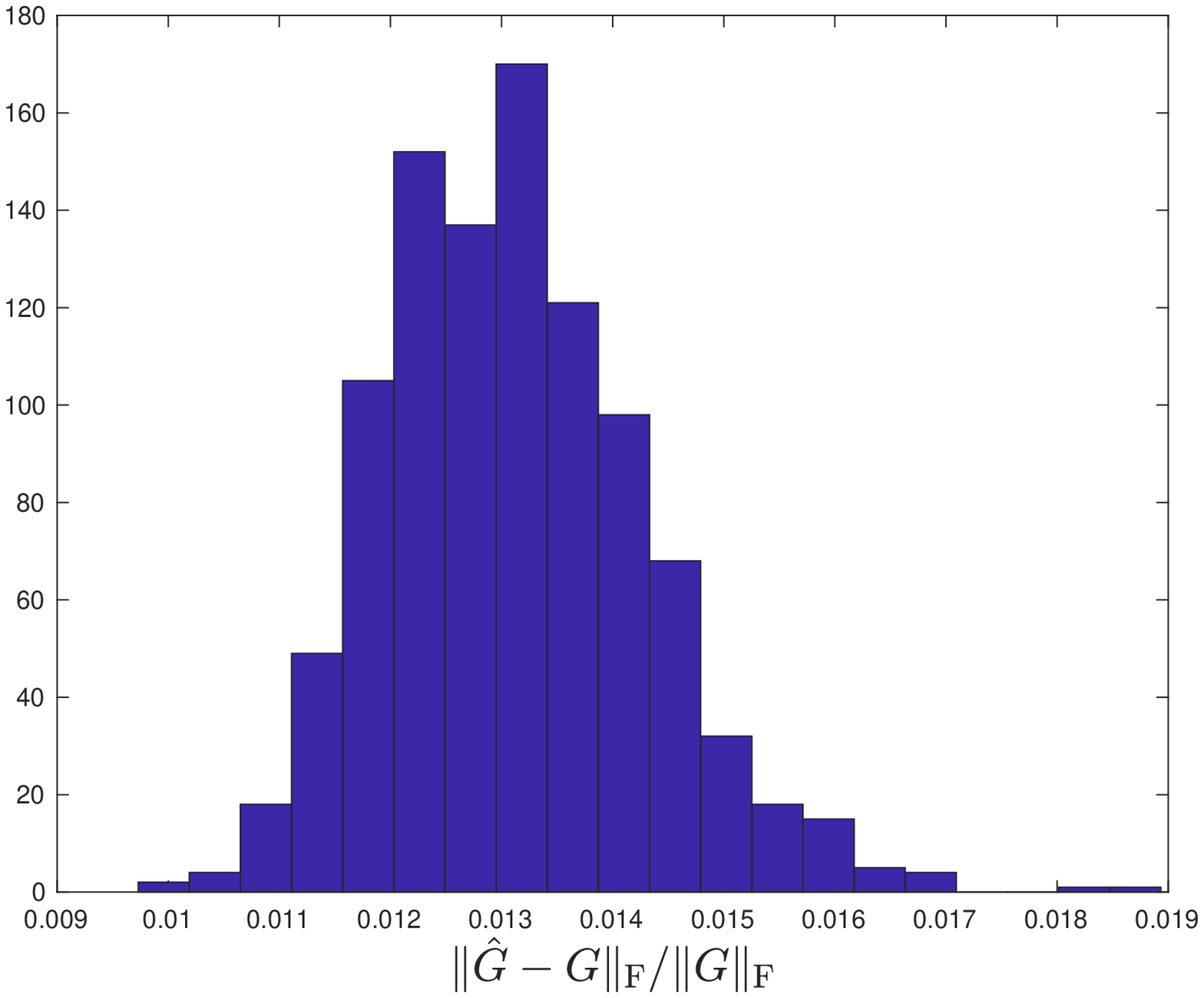}\\
\includegraphics[width=0.45\textwidth]{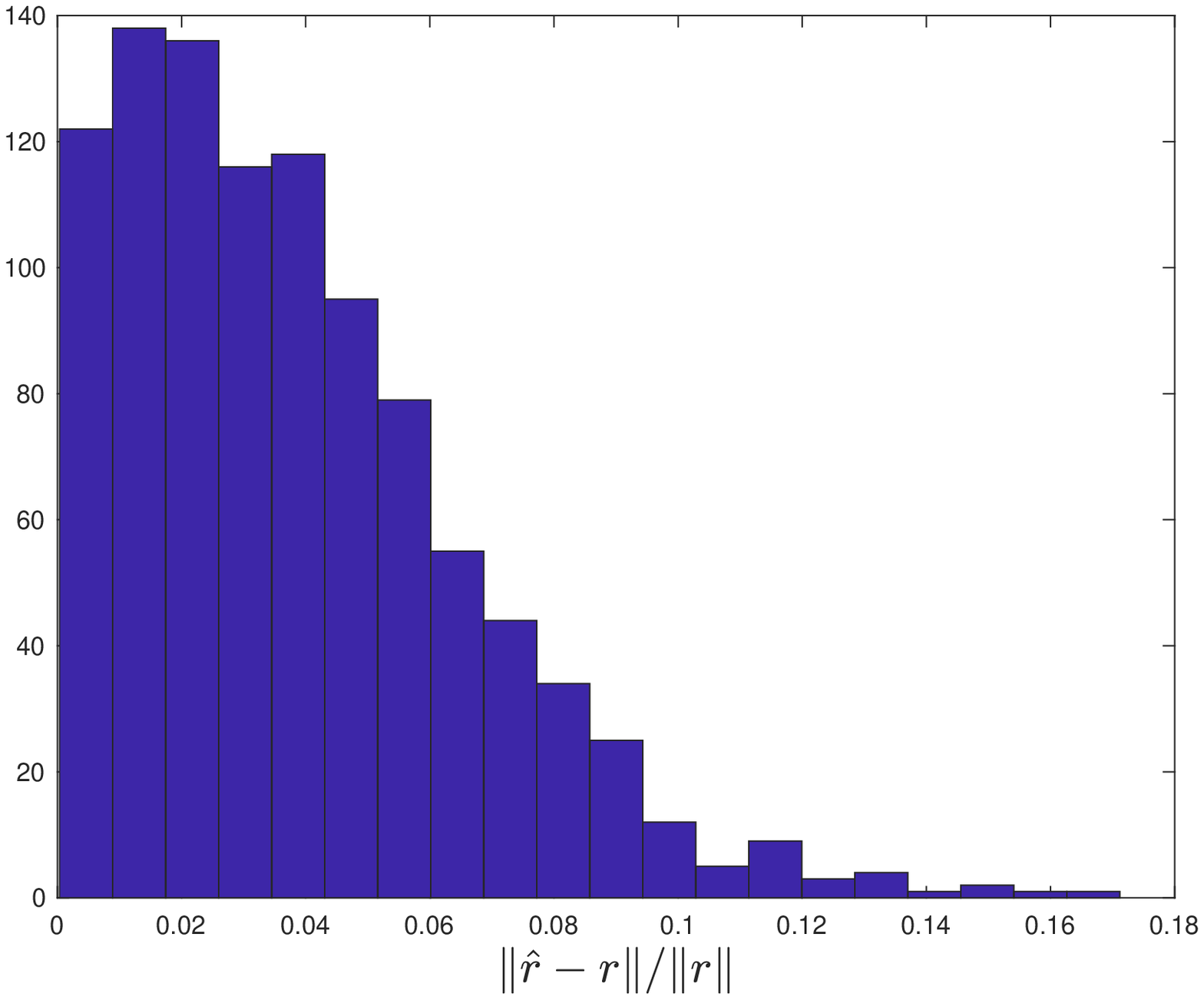} \includegraphics[width=0.45\textwidth]{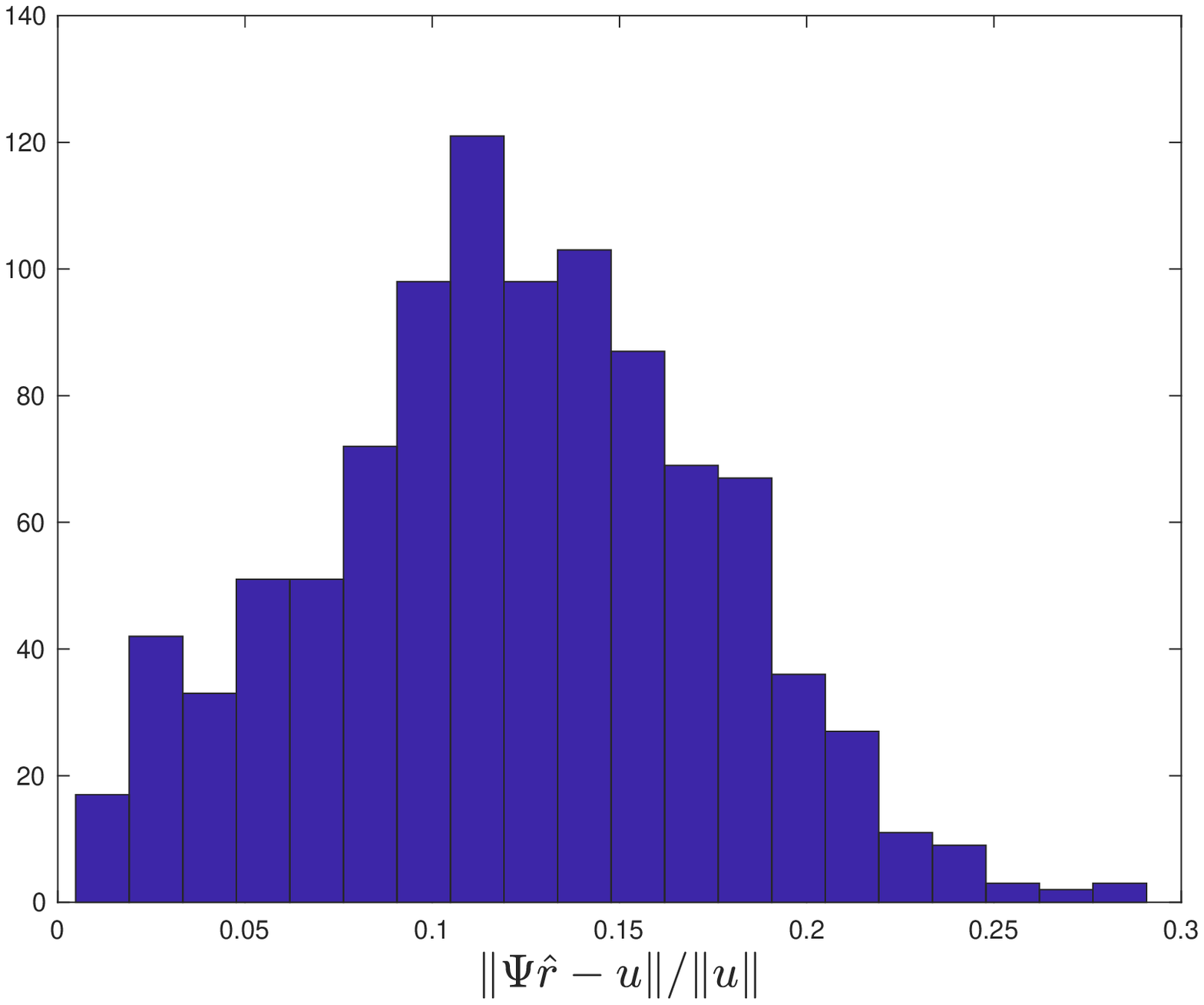}\\
\includegraphics[width=0.45\textwidth]{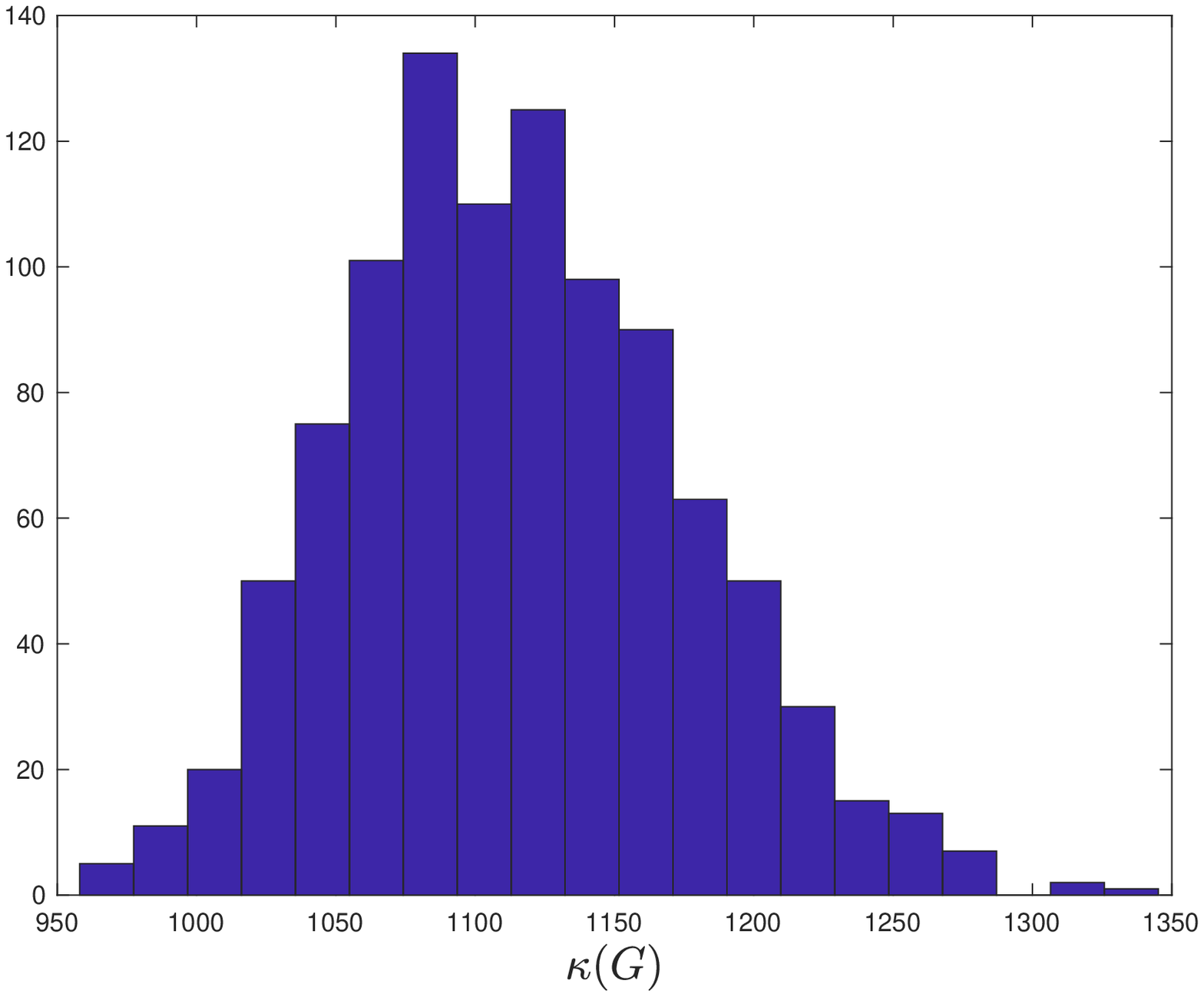}
\includegraphics[width=0.45\textwidth]{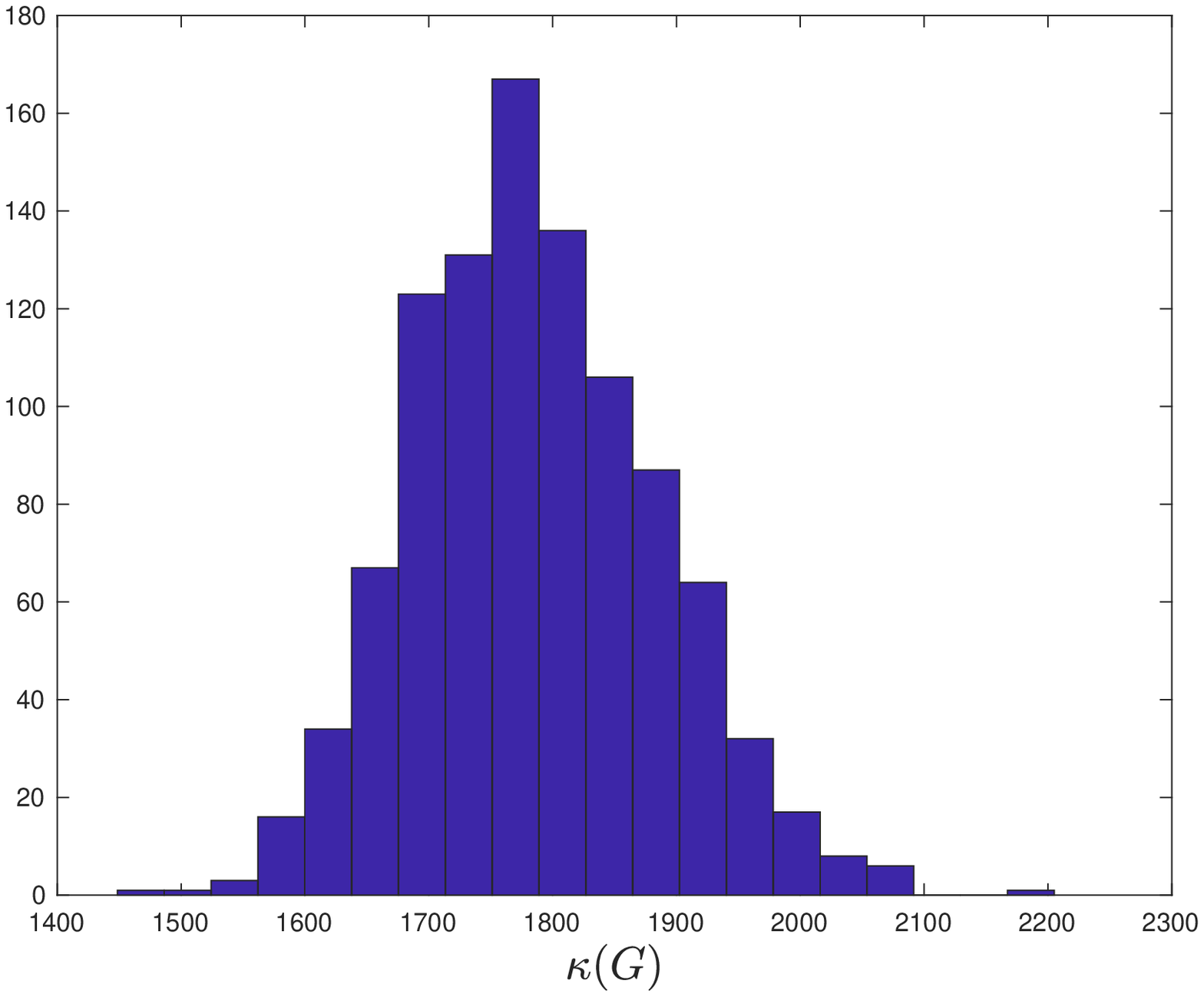}
\caption{Histograms of the various error components affecting the sketched solution of the Neumann problem in 1000 simulations of test F, where $p \sim \mathcal{U}[10^{-1},10^2]$, $c=500000$ and $\rho=50$. Notice that the total error spans over a larger range of values as a affected by the relatively large condition number of the $G$ matrices, as shown at the bottom left figure. For comparison we plot to its right the respective histogram for $\rho=100$ showing that $\kappa(G)$ remains significantly higher compared to the Dirichlet case and that the situation worsens as $\rho$ increases.}
\end{center}
\label{Neu_hist}
\end{figure}    

\section{Conclusions}

We propose a fast, randomised implementation of the finite element method for solving elliptic partial differential equations on high-dimensional models. Our approach is particularly appealing to the many query context where the solution to the PDE is sought for many instances of the parameter vector. We reformulate the linear FEM system as an overdetermined least squares problem and then apply an orthogonal projection onto a low-dimensional subspace. Invoking this projection offers a twofold advantage: it reduces the dimensionality of the problem but it also allows to sketch the matrices involved using sampling distributions that approximate well those corresponding to the statistical leverage scores. We  analyse the conditions on the subspace that enable this favourable performance and then we bound the errors imparted to the solution. This led to the conclusion that the error amplification is controlled by the condition number of the coefficients matrix of the projected problem. Tested on the Dirichlet and Neumann problems for the steady-state diffusion equation boundary value problem, the performance of the algorithm is aligned to the derived approximation bounds, while it yields substantial computational savings for a moderate solution error.

\section*{Acknowledgements}
NP and YW are grateful to EPSRC for funding this work through the project EP/R041431/1, titled `Randomness: a resource for real-time analytics'. NP acknowledges additional support from the Alan Turing Institute.

\section*{Appendix}

\begin{proof}[Proof of Proposition \ref{prop:unbias}] The claim can be shown through the linearity property of expectation and its definition as follows
\begin{align*}
\mathbb{E}_{\xi}[\hat G]&=\mathbb{E}_{\xi}\Big [\frac 1 c \sum_{t=1}^c \frac{1}{\xi_{r_t}} z_{r_t} (D \Psi)^T_{(r_t)*}(D \Psi)_{(r_t)*}\Bigr ]\\
&= \sum_{t=1}^{c} \sum_{\ell=1}^{kd} \xi_{\ell}\frac{z_\ell}{c\xi_{\ell}} (D\Psi)^T_{(\ell)*}(D\Psi)_{(\ell)*}\\
&=\sum_{\ell=1}^{kd} z_{\ell}(D\Psi)^T_{(\ell)*}(D\Psi)_{(\ell)*}=G.
\end{align*}
\end{proof}

\begin{proof}[Proof of Corollary \ref{col:unbias}] Recall that $SS^T$ is a $kd\times kd$ diagonal matrix, whose $\ell$th index has a value of the product of the cardinality of index $\ell$ in the sample set $\{r_1,\ldots, r_t\}$ denoted by $c_\ell$ with $\frac{1}{c\xi_\ell}$. From this it remains to show that each entry has expectation $1$.  For an index $\ell$, the sampling procedure can be treated as a sequence of $c$ binomial trials each with success probability $\xi_\ell$. Therefore we have 
$$\mathbb{E}_\xi[(SS^T)_{\ell \ell}]=\mathbb{E}\big[\frac{c_\ell}{c\xi_\ell}\big]=\frac{1}{c\xi_\ell}\mathbb{E}[c_\ell]=\frac{1}{c\xi_\ell}c\xi_\ell = 1,
$$
where the penultimate equality is by virtue of the properties of binomial random variables.
\end{proof}

\begin{proof}[Proof of Proposition \ref{prop:minsigma}] 
First we note that for all $1\leq i,j\leq \rho$ we have
$$
\hat G_{ij}= \sum_{t=1}^c (G_t)_{ij},
$$ 
where
$$
(G_t)_{ij} = \frac{1}{c\xi_{r_t}} z_{r_t}(D \Psi)_{r_t i}(D \Psi)_{r_t j}
$$
A direct consequence of $\mathbb{E}_{\xi}[\hat G]=G$ is that $\mathbb{E}_{\xi}[\hat G_{ij}]=G_{ij}$, and $\mathbb{E}_\xi [(G_t)_{ij}] = \frac 1 c G_{ij}$. We then have that
$$
\mathrm{Var}_\xi [\hat G_{ij}] = \sum_{t=1}^c \mathrm{Var}_\xi [(G_t)_{ij}] = \sum_{t=1}^c \Bigl ( \mathbb{E}[(G_t)_{ij}^2 ] - \mathbb{E}[(G_t)_{ij}]^2 \Bigr )
$$
from where we get
$$
\mathrm{Var}_\xi[\hat G_{ij}] = \frac 1 c \Bigl ( \sum_{\ell=1}^{kd} \frac{1}{\xi_\ell} z_\ell^2 (D\Psi)^2_{\ell i} (D\Psi)^2_{\ell j} - G_{ij}^2 \Bigr ) \quad 1 \leq i,j \leq \rho.
$$
To bound the sketching-induced error we have
$$
\mathbb{E}_\xi[\|G-\hat G\|_F^2]=\sum_{i=1}^\rho \sum_{j=1}^\rho \mathbb{E}_\xi[(G-\hat G)^2_{ij}]=\sum_{i=1}^\rho \sum_{j=1}^\rho \mathrm{Var}_{\xi}[\hat G_{ij}].
$$
In fixing the matrix indices we have
\begin{align}\nonumber
\sum_{i=1}^\rho \sum_{j=1}^\rho\mathrm{Var}_{\xi}[\hat G_{ij}] & = \sum_{i=1}^\rho \sum_{j=1}^\rho \frac{1}{c} \Bigl ( \sum_{\ell=1}^{kd}\frac{z_\ell^2}{\xi_\ell} (D\Psi)^2_{\ell i}(D\Psi)^2_{\ell j} - G_{ij}^2 \Bigr)\\\nonumber
& = \frac 1 c \sum_{\ell=1}^{kd} \frac{z_\ell^2}{\xi_\ell} \sum_{i=1}^\rho \sum_{j=1}^\rho (D \Psi)^2_{\ell i} (D \Psi)^2_{\ell j} - \frac 1 c \sum_{i=1}^\rho \sum_{j=1}^\rho G_{ij}^2 \\\nonumber
& = \frac 1 c \sum_{\ell=1}^{kd} \frac{z_\ell^2}{\xi_\ell} \sum_{i=1}^\rho (D\Psi)^2_{\ell i} \sum_{j=1}^\rho (D\Psi)^2_{\ell j} - \frac 1 c \|G\|^2_F\\\label{optxi}
& = \frac 1 c \Bigl ( \sum_{\ell=1}^{kd} \frac{z_\ell^2}{\xi_\ell} \|(D\Psi)_{(\ell)*}\|^4 - \|G\|^2_F \Bigr )\\ 
& \leq \frac 1 c \Bigl ( \sum_{\ell=1}^{kd} \frac{z_\ell^2}{\xi_\ell} \|D_{(\ell)*}\|^4- \|G\|^2_F \Bigr ) \label{suboptxi}
\end{align}
where the first inequality holds true since $\mathrm{Var}[X]\leq \mathbb{E}[X^2]$ for any real-valued random variable $X$, and the last from $\Psi^T\Psi=I$. In order to optimise the choice of $\xi$ in reducing the sketching error we invoke the Lagrangian function based on \eqref{optxi}
\begin{align*}
\mathcal{L}(\xi;\lambda)= \sum_{\ell=1}^{kd} \frac{z_\ell^2}{\xi_\ell} \|(D\Psi)_{(\ell)*}\|^4 + \lambda\Big(\sum_{\ell=1}^{kd} \xi_{\ell} -1\Big).
\end{align*}
for which the method of Lagrange multipliers returns 
 \begin{align*}
\xi_\ell = \frac{z_\ell \|(D\Psi)_{(\ell)*}\|^2}{\sum_{\ell=1}^{kd}z_\ell \|(D\Psi)_{(\ell)*}\|^2},  \quad 1 \leq \ell \leq kd.
\end{align*}
Plugging in the optimal expression of $\xi$ into simulation error expression yields
\begin{align*} 
&\mathbb{E}_\xi[\|G-{\hat G}\|^2_F]\leq  \frac{1}{c}\sum_{\ell=1}^{kd} z_{\ell} \|(D\Psi)_{(\ell)*}\|^2 \sum_{\ell=1}^{kd}z_\ell \|(D \Psi)_{(\ell)*}\|^2 - \frac 1 c \|G\|^2_F \\
&\leq \frac{1}{c} \Bigl (\bigl(\sum_{\ell=1}^{kd} z_\ell \|(D\Psi)_{(\ell)*}\|\bigr)^2 - \|G\|^2_F\Bigr ). \end{align*}
\end{proof}

\begin{proof}[Proof of Proposition \ref{prop:max}]
Applying the induced norm to the expression of $\hat G$ in \eqref{hatA} yields 
\begin{align*}
\|\hat G\|& \leq \frac 1 c \sum_{t=1}^c \frac{z_{r_t}}{\xi_{r_t}} \bigl \|(D\Psi)^T_{(r_t)*}(D\Psi)_{(r_t)*} \bigr \|=\frac 1 c \sum_{t=1}^c \frac{z_{r_t}}{\xi_{r_t}}  \bigl \|(D\Psi)_{(r_t)*} \bigr\|^2\\
& = \frac 1 c \sum_{t=1}^c \sum_{\ell=1}^{kd} z_\ell \bigl \|(D\Psi)_{(r_\ell)*} \bigr\|^2  =\sum_{\ell=1}^{kd} z_\ell \bigl \|(D\Psi)_{(r_\ell)*} \bigr\|^2 \leq d \, p_\Omega \|D\|^2. 
\end{align*}
where $p_\Omega =  \sum_{\ell=1}^k p_\ell |\Omega_\ell|$.
\end{proof}

\begin{proof}[Proof of Proposition \ref{prop:min}]
Applying directly the eigenvalue perturbation result from \cite{Meyer00}, we immediately have 
\begin{align}\label{eqn:perturb}
 \sigma_i (\hat G )\geq  \sigma_i ( G)+\lambda_{\min}(\hat G- G), \quad \text{for} \quad i=1,\ldots, \rho
\end{align}
where $\lambda_{\min}$ represents the minimum eigenvalue of a matrix. Note for the symmetric matrix $\hat G- G$, $|\lambda_{\min}(\hat G- G)|\leq \|\hat G- G\|$. Markov's inequality leads to
\begin{align*}
\mathbb{P}_{\xi}\big( \|\hat G- G\|\leq  \gamma\sigma_{\min} ( G )\big)\geq 1-\min\Big\{1,\frac{\mathbb{E}_{\xi}[\|\hat G- G\|_F]}{\gamma\sigma_{\min}(G )}\Big\}.
\end{align*}
Thus with the above indicated probability, we have $$|\lambda_{\min}(\hat G-G)|\leq \gamma\sigma_{\min} ( G),$$
which implies that $\lambda_{\min}(\hat G-G)\geq -\gamma\sigma_{\min} ( G )$. Substituting back into \eqref{eqn:perturb} yields the final assertion.
\end{proof}

\begin{proof}[Proof of Lemma \ref{lem:inhom}]
With the definitions as before, i.e, $Y=Z^{\frac 1 2}D$, $X=Y \Psi$ and $\Psi^T\Psi = I$, let $
X = U_X \Sigma_X V_X^T$, where $U_X \in \mathbb{R}^{kd \times \rho}$, $\Sigma_X \in \mathbb{R}^{\rho \times \rho}$, $V_X \in \mathbb{R}^{\rho \times \rho}$, and $\beta = \|X\|_F^{-2}$. Then
$$
\|\xi^{l(X)} - \xi^{r(X)}\| = \Bigl \|\frac 1 \rho l_X - \beta r_X \Bigr \| = \Bigl \| \mathrm{diag}\Bigl (U_X \Bigl (\frac 1 \rho I - \beta \Sigma_X^2 \Bigr ) U_X^T \Bigr) \Bigr \|,
$$
where $\|\cdot\|$ can now be taken as an arbitrary vector norm to be determined. Taking the 2-norm gives
\begin{align*}
  &  \|\xi^{l(X)} - \xi^{r(X)}\|=\Bigl \| \mathrm{diag}\Bigl (U_X \Bigl (\frac 1 \rho I - \beta \Sigma_X^2 \Bigr ) U_X^T \Bigr) \Bigr \|_F\\
  &\leq\Bigl \| U_X \Bigl (\frac 1 \rho I - \beta \Sigma_X^2 \Bigr ) U_X^T \Bigr \|_F\leq \|U_X\|^2\Bigl \| \frac 1 \rho I  - \frac{  \Sigma_X^2}{\|X\|_F^2} \Bigr \|_F\\
  &\leq \sqrt{\sum_{i=1}^\rho \big(\frac 1\rho- \pi_i(\rho)\big)^2}=\sqrt{\sum_{i=1}^\rho \pi_i(\rho)^2-\frac{1}{\rho}}.
\end{align*}
On the other hand, taking the max-norm yields 
\begin{align*}
  &  \|\xi^{l(X)} - \xi^{r(X)}\|_{\max}=\Big\| \mathrm{diag}\Bigl (U_X \Bigl (\frac 1 \rho I - \beta \Sigma_X^2 \Bigr ) U_X^T \Bigr) \Big\|_{\max} \\
  &\leq \Big\|U_X \Bigl (\frac 1 \rho I - \beta \Sigma_X^2 \Bigr ) U_X^T  \Big\| \leq \|U_X\|^2\Bigl \| \frac 1 \rho I  - \frac{  \Sigma_X^2}{\|X\|_F^2} \Bigr \| \\
  &\leq \big(\max_i \pi_i(\rho)-\frac{1}{\rho}\big)\vee \big(\frac{1}{\rho}-\min_i \pi_i(\rho)\big).
\end{align*}
\end{proof}

\begin{proof}[Proof of Theorem \ref{thm:inhomo2}]
We have that
\begin{align}\label{eqn:Xboundin1}
\begin{split}
  & \|l(X)-r(X)\|_{\max}\leq\Bigl \| \mathrm{diag}\Bigl (U_X \Bigl ( I_\rho -  \Sigma_X^2 \Bigr ) U_X^T \Bigr) \Bigr \|\\
  &\quad =\Bigl \| \mathrm{diag}\Bigl (U_X  \Bigl (I_\rho - \Sigma_X^2 \Bigr ) U_X^T  \Bigr) \Bigr \|\\
  &\quad \leq |1-\lambda_{1}(\Sigma_X)^2
 |\vee|1-\lambda_{\rho}(\Sigma_X)^2|.
 \end{split}
\end{align}
On the other hand,
\begin{align}
\label{eqn:Yboundin1}
\begin{split}
  & \| l(Y)-r(Y)\|_{\max}= \Bigl \| \mathrm{diag} \Bigl ( U_Y \Bigl (  I_n -  \Sigma_Y^2 \Bigr ) U_Y^T \Bigr )\Bigr \|\\
  &\geq \frac{1}{kd}\Big|\text{Trace}\Big(U_Y \Bigl ( I_n -  \Sigma_Y^2 \Bigr ) U_Y^T\Big)\Big|= \frac{1}{kd}\Big|n-\sum_{i=1}^{n}\lambda_{i}(\Sigma_Y)^2 \Big|.
 \end{split}
\end{align}
Besides, lemma 4.4 in \cite{StefanVolkein} suggests that
\begin{align}\label{eqn:lambdainq}
   \lambda_{n-\rho+i}(\Sigma_Y^2) \leq \lambda_i(\Sigma_X^2)\leq \lambda_i(\Sigma_Y^2)
\end{align}
for $i\in [\rho]$. Then under the condition \eqref{eqn:rhocondin4}, the upper bound of \eqref{eqn:Xbound1} is 
\begin{align*}
    |1-\lambda_{1}(\Sigma_X)^2
 |\vee|1-\lambda_{\rho}(\Sigma_X)^2|= \lambda_{1}(\Sigma_X)^2-1\leq \lambda_{1}(\Sigma_Y)^2-1.
\end{align*}
Condition \eqref{eqn:rhocondin5} suggests that
$$\frac{1}{kd}\Big(\sum_{i=1}^{n}\lambda_{i}(\Sigma_Y)^2-n\Big)\geq \lambda_{1}(\Sigma_Y)^2-1,$$
which also implies that the lower bound of \eqref{eqn:Yboundin1} is $\frac{1}{kd}\Big(\sum_{i=1}^{n}\lambda_{i}(\Sigma_Y)^2-n\Big)$.
\end{proof}

\begin{proof}[Proof of Proposition \ref{prop:SimErr3-second}]
We have the normal equations
\begin{align*}
X^TX r=X^TZ^{-\frac{1}{2}}(D^T)^\dagger b=\Psi^T b,
\end{align*}
and 
\begin{align*}
X^TSS^TX\hat r = \Psi^T b.
\end{align*}
Subtracting the latter equation from the first one gives
\begin{align*}
&X^TSS^TX(r-\hat r)=-X^T(I-SS^T)X r.
\end{align*}
Taking $2$-norm yields
\begin{align*}
&\lambda_{\min}(\hat G)\|r- \hat r\| \leq \|G-\hat G\|\| r\|.
\end{align*}

Assume that matrix $\hat G$ is invertible. For the estimation of $\lambda_{\min}(\hat G)$, which is exactly $\lambda_\rho(\hat G )$, defining $\gamma\dot=\frac{\epsilon}{\epsilon+1}$ and following similar arguments as in the proof of Proposition \ref{prop:min} gives
\begin{align*}
\lambda_\rho(\hat G)\geq \lambda_\rho(G)+\lambda_\rho(\hat G-G)\geq (1-\gamma)\lambda_\rho(G)
\end{align*}
and
$$\|\hat G -G\|\leq \gamma\lambda_\rho(G)$$
with probability at least $1-\min\Big\{1,\frac{\mathbb{E}_{\xi}[\|G-\hat G\|_F]}{\gamma\lambda_\rho(G)}\Big\}$. Besides, based on the assumptions of $c$ and $\delta$, we have through Proposition \ref{prop:minsigma} that
$$\frac{\mathbb{E}_{\xi}[\|G-\hat G\|_F]}{\gamma\lambda_\rho(G)}\leq \frac{\sqrt{\mathbb{E}_{\xi}[\|G-\hat G\|^2_F]}}{\gamma\lambda_\rho(G)}\leq \frac{\sqrt{(\sum_{\ell=1}^{kd} z_\ell \|D_{(\ell)*}\|_2)^2- \|G\|^2_F  } }{\sqrt{c}\gamma\lambda_\rho(G)}\leq \delta.$$
Thus the probability above can be lower-bounded by $1-\delta$. In summary, these estimations lead to 
\begin{align*}
&\|\Psi r - \Psi \hat r \|\leq \|r - \hat r\|\leq \frac{\gamma}{1-\gamma}\|r\|\leq \epsilon\|\Psi r\|
\end{align*}
with probability $1-\delta$ for any $\epsilon,\delta\in (0,1)$ with $c$ chosen to be
$$(1+\frac 1 \epsilon)^2\frac{ \bigl((\sum_{\ell=1}^{kd} z_\ell \|(D\Psi)_{(\ell)*}\|)^2- \|G\|^2_F  \bigr)}{\delta\lambda_{\min}(G)^2}\leq c. $$
\end{proof}

\newcommand{\etalchar}[1]{$^{#1}$}

\end{document}